\DeclareSymbolFont{AMSb}{U}{msb}{m}{n}
\DeclareMathSymbol{\Z}{\mathbin}{AMSb}{"5A}
\DeclareMathSymbol{\R}{\mathbin}{AMSb}{"52}
\DeclareMathSymbol{\N}{\mathbin}{AMSb}{"4E}
\DeclareMathSymbol{\Q}{\mathbin}{AMSb}{"51}
\DeclareMathSymbol{\F}{\mathbin}{AMSb}{"46}
\DeclareMathSymbol{\C}{\mathbin}{AMSb}{"43}
\newcommand{\B}{\mathbf{b}}
\newcommand{\avec}{\mathbf{a}}
\newcommand{\bvec}{\mathbf{b}}
\newcommand{\cvec}{\mathbf{c}}
\newcommand{\dvec}{\mathbf{d}}
\newcommand{\fvec}{\mathbf{f}}
\newcommand{\gvec}{\mathbf{g}}
\newcommand{\hvec}{\mathbf{h}}
\newcommand{\kvec}{\mathbf{k}}
\newcommand{\uvec}{\mathbf{u}}
\newcommand{\vvec}{\mathbf{v}}
\newcommand{\wvec}{\mathbf{w}}
\newcommand{\xvec}{\mathbf{x}}
\newcommand{\yvec}{\mathbf{y}}
\newcommand{\Bset}{\mathcal{B}}
\newcommand{\ev}{\textup{ev}}
\newcommand{\Span}{\textup{Span}}
\newcommand{\EQP}{\textup{EQP}}
\DeclarePairedDelimiter\floor{\lfloor}{\rfloor}
\DeclarePairedDelimiter\nearest{\lfloor}{\rceil}
\providecommand{\abs}[1]{\lvert#1\rvert}
\providecommand{\norm}[1]{\lvert\lvert#1\rvert\rvert}
\newtheorem{thm}{Theorem}[section]
\newtheorem{lem}[thm]{Lemma}
\newtheorem{cor}[thm]{Corollary}
\newtheorem{prop}[thm]{Proposition}
\newtheorem{fact}[thm]{Fact}
\newtheorem{quest}[thm]{Question}
\theoremstyle{definition}
\newtheorem{definition}[thm]{Definition}
\theoremstyle{remark}
\newtheorem{remark}[thm]{Remark}
\theoremstyle{remark}
\newtheorem{example}[thm]{Example}
\theoremstyle{remark}
\newtheorem{claim}[thm]{Claim}
\theoremstyle{remark}
\theoremstyle{remark}
\theoremstyle{remark}
\begin{document}
\bibliographystyle{amsplain}

\title[A parametric version of LLL and some consequences]{A parametric version of LLL and some consequences: parametric shortest and closest vector problems}
\author{Tristram Bogart, John Goodrick, and Kevin Woods}

\begin{abstract}
Given a parametric lattice with a basis  given by polynomials in $\Z[t]$, we give an algorithm to construct an LLL-reduced basis whose elements are eventually quasi-polynomial in $t$: that is, they are given by formulas that are piecewise polynomial in $t$ (for sufficiently large $t$), such that each piece is given by a congruence class modulo a period. As a consequence, we show that there are parametric solutions of the shortest vector problem (SVP) and closest vector problem (CVP) that are also eventually quasi-polynomial in $t$.
\end{abstract}

\maketitle

\section{Introduction} \label{sec:intro}

This paper addresses the following three related questions:

\begin{quest}
\label{main_quest}
Suppose that $\fvec_1(t), \ldots, \fvec_n(t) \in \Z[t]^m$ are $n$ integer vectors in $\Z^m$ whose coordinates vary according to polynomials in $\Z[t]$, where $t \in \N$. Let $\Lambda_t$ be the lattice spanned by $\fvec_1(t), ..., \fvec_n(t)$. \footnote{Note that we do \emph{not} assume that $\fvec_1(t), ..., \fvec_n(t)$ are linearly independent, that is, they may not be a basis for $\Lambda_t$.}

\begin{enumerate}
\item What type of function $\uvec : \N \rightarrow \Z^m$ can we obtain that selects a shortest nonzero vector in the lattice $\Lambda_t$? 
\item Suppose $\xvec(t) \in \Q(t)^m$ is another parametric vector. What type of function $\uvec : \N \rightarrow \Z^m$ can we obtain that selects a vector in $\Lambda_t$ which is as close as possible to $\xvec(t)$?
\item What type of functions $\gvec_1, \ldots, \gvec_{n'}:\N \rightarrow \Z^m$ can we obtain so that $\mathcal{B}_t = \{\gvec_1(t), \ldots, \gvec_{n'}(t)\}$ is an LLL-reduced basis (to be defined in Section 2) of $\Lambda_t$?
\end{enumerate}

\end{quest}

If the vectors $\fvec_1, \ldots, \fvec_n$ and $\xvec$ do not depend on $t$, then finding solutions $\uvec$ in (1) and (2) are known as the Shortest Vector Problem (SVP) and Closest Vector Problem (CVP), respectively, and computing an LLL-reduced basis (3) is one common technique to tackle these and many other problems; see, for example, Galbraith's text \cite[Chapters 17 and 18]{Galbraith}.

To get a feel for how solutions $\uvec(t)$ to the \emph{parametric} SVP might look, let us consider a few examples. If $\fvec_1(t) = (t,2)$ and $\fvec_2(t) = (1,t^2)$, then a shortest nonzero vector in $\Lambda_t$ is $\uvec(t)=(t,2)$ for all $t \geq 3$ but something else for $t=0,1,2$. That is, a  formula for $\uvec(t)$ might only stabilize \emph{eventually}, after finitely many exceptional values. On the other hand, if $\fvec_1(t) = (3,0)$ and $\fvec_2(t) = (2t,1)$, then a shortest vector in $\Lambda_t$ is
\[\uvec(t)=\begin{cases}(0,1)&\text{if $t\equiv 0\pmod 3$},\\
(-1,1)&\text{if $t\equiv 1\pmod 3$},\\
(1,1)&\text{if $t\equiv 2\pmod 3$}.
\end{cases}\]
That is, a  formula  for $\uvec(t)$ might depend on the congruence class of $t$ modulo a period. 

In this paper we give an answer to Question~\ref{main_quest}, showing that examples such as the two above yield the only complications in formulas for $\uvec(t)$. To be precise, our solution to the parametric SVP is:

\begin{thm} \label{thm:shortest}
Let $\Lambda_t = \Span_\Z \{\fvec_1(t), \dots, \fvec_n(t)\} \subseteq \Z[t]^m$. Then for all sufficiently large values of $t \in \N$, a shortest nonzero vector $\uvec(t) \in \Lambda_t$ can be produced by a formula
\begin{equation*}
\uvec(t) = \gvec_i(t), \textup{ if } t \equiv i \textup{ (mod } N \textup{)}
\end{equation*}
for fixed $N \in \N$ and $\gvec_0, \ldots, \gvec_{N-1} \in \Q[t]^m$.
\end{thm}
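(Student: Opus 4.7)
The plan is to reduce the theorem to a comparison of finitely many quasi-polynomials in $t$. The strategy has three stages: (i) extract from the generators $\fvec_1(t), \ldots, \fvec_n(t)$ a $\Z$-basis of $\Lambda_t$ whose entries are quasi-polynomial in $t$; (ii) replace this basis by a reduced one in which any shortest vector is a bounded integer combination; and (iii) compare the resulting finite family of candidate squared norms, which are quasi-polynomials in $t$.

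For stage (i), I would mimic the Hermite normal form reduction applied column-by-column to the matrix $[\fvec_1(t)\mid\cdots\mid\fvec_n(t)]$. The atomic operation is the Bezout identity $\gcd(f(t), g(t)) = \alpha(t) f(t) + \beta(t) g(t)$ for $f, g \in \Z[t]$. The key parametric-gcd lemma asserts that, after refining $t$ into residue classes modulo some period and taking $t$ sufficiently large, both $\gcd(f(t), g(t))$ and suitable Bezout cofactors $\alpha(t), \beta(t)$ are quasi-polynomial in $t$. Iterating these operations yields a basis $\bvec_1(t), \ldots, \bvec_k(t) \in \Q[t]^m$ of $\Lambda_t$ whose entries are quasi-polynomial for large $t$.

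For stage (ii), I would run LLL-style size reduction and Gram--Schmidt orthogonalization on the parametric basis. The Gram--Schmidt coefficients $\mu_{ij}(t)$ are rational functions of $t$, and for $t$ large each rounded integer $\nearest{\mu_{ij}(t)}$ stabilizes to a quasi-polynomial. The Lov\'asz swap conditions are polynomial inequalities in $t$ whose truth values stabilize for $t \gg 0$ on each residue class; since only finitely many swaps occur, the procedure terminates with a basis $\bvec_1^\ast(t), \ldots, \bvec_k^\ast(t)$ of the same quasi-polynomial form. Standard LLL estimates then force any shortest nonzero vector of $\Lambda_t$ to be of the form $\sum_i c_i \bvec_i^\ast(t)$ with $|c_i| \le C$ for a constant $C$ depending only on $k$. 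For stage (iii), two quasi-polynomials of $t$ are either eventually equal on each residue class or one is eventually strictly smaller; taking the common refinement $N$ of all periods, on each residue class modulo $N$ and for $t$ large enough, a unique minimizer (with some fixed tie-breaking rule) produces the desired $\gvec_i(t) \in \Q[t]^m$.

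I expect stage (i) to be the main obstacle: parametric Hermite normal form requires careful bookkeeping of gcds of polynomial evaluations, whose behavior in distinct residue classes of $t$ is precisely the source of the quasi-polynomial period $N$ in the final answer. One must verify that the iterated Bezout operations do not introduce denominators that destroy the lattice structure, and that the rank of $\Lambda_t$ stabilizes for large $t$ inside each residue class. Once a quasi-polynomial basis is in hand, stages (ii) and (iii) become a routine combination of classical LLL bounds with the elementary pointwise comparison of finitely many quasi-polynomials.
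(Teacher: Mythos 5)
Your endgame is the paper's: once an eventually LLL-reduced basis with EQP coordinates is available, Fact~\ref{fact:coefbounds} bounds the coefficients of any shortest vector by a constant depending only on the rank, leaving a fixed finite family of candidate vectors whose squared norms are EQP, and Corollary~\ref{cor:EQPcomparison} selects an eventual minimizer on each residue class. The genuine gap is in your stage (ii), where you produce that basis by running the classical LLL swap loop on the parametric basis and asserting that ``only finitely many swaps occur.'' That finiteness claim is precisely the obstacle the paper identifies in the discussion at the end of Section 2 and then devotes Section 3 to circumventing. The classical termination proof caps the number of swaps by the potential $D=\prod_i d_i$, a positive integer that shrinks by a factor $\delta$ at every swap. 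Parametrically, every rounding $\nearest{\rho_{k,j}(t)}$ and every Lov\'asz test forces a branch, i.e.\ a substitution $t\mapsto Mt'+r$ followed by rewriting all data in $t'$; this reparametrization changes the growth of the integer-valued potential $D(t)$, so ``a positive integer decreasing geometrically'' no longer bounds the number of swaps across branches, and you offer no alternative termination argument. The paper sidesteps the swap loop entirely: it sorts the basis by degree, makes the leading-coefficient (pilot) vectors independent within each degree via Hermite normal form (Proposition~\ref{prop:Hermite}), applies the ordinary finite LLL algorithm to the \emph{constant} pilot vectors degree by degree (Propositions~\ref{prop:samedegree} and~\ref{prop:samedegree2}), enforces asymptotic orthogonality between distinct degrees (Proposition~\ref{prop:reduce_one_vector}, Corollary~\ref{cor:orthogonal_to_all}, Lemma~\ref{lem:Lovaszbydegree}), and finishes with an a priori finite sequence of cross-degree size reductions (Algorithm~\ref{alg:size-tweaking}), so termination is never in question.

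A secondary weakness is stage (i). You assert, without proof, a parametric gcd/Bezout lemma (that $\gcd(f(t),g(t))$ and suitable cofactors are quasi-polynomial) and a parametric Hermite normal form for the evaluated integer matrices. Something along these lines can be established (for $f,g$ coprime as polynomials the integer gcd divides the resultant and is therefore eventually periodic; alternatively one can invoke the parametric Presburger machinery of Theorem~\ref{thm:BGW}), but as written it is an unproved lemma, and it is also avoidable: the paper never computes a Hermite-style $\Z$-basis of $\Lambda_t$ itself, only the Hermite normal form of the constant matrix of pilot vectors, with linear dependencies among the given generators absorbed by the degree-lowering mechanism of Proposition~\ref{prop:Hermite}.
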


A vector-valued function $\uvec: \N \rightarrow \Z^m$ of the form given in Theorem~\ref{thm:shortest} is called \emph{eventually quasi-polynomial} or ``EQP'' for short. (It is \emph{quasi-polynomial} if the formula holds for all $t \in \N$, not just for $t$ sufficiently large.)

The answer to part (2) of the question is the same:

\begin{thm} \label{thm:closest}
Let $\Lambda_t = \Span_\Z \{\fvec_1(t), \dots, \fvec_n(t)\} \subseteq \Z[t]^m$, and let $\xvec(t)\in \Q(t)^m$. Then  there exists $\uvec:\N\rightarrow\Z^m$ with EQP coordinates such that $\uvec(t)$ is a vector in $\Lambda_t$ that is closest to $\xvec(t)$. 
\end{thm}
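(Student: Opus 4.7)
The overall plan is to reduce the parametric CVP to the parametric SVP (Theorem~\ref{thm:shortest}) via a parametric version of Babai's nearest-plane algorithm, followed by a finite enumeration of candidate lattice vectors. First I would normalize: writing $\xvec(t) = \yvec(t)/q(t)$ with $\yvec(t) \in \Z[t]^m$ and $q(t) \in \Z[t]$ eventually nonzero, and scaling the entire problem by $q(t)$, CVP for $(\Lambda_t, \xvec(t))$ becomes CVP for $(q(t)\Lambda_t, \yvec(t))$; dividing a closest vector in the scaled problem by $q(t)$ recovers a closest vector in the original. So I may assume $\xvec(t)\in\Z[t]^m$ from the outset.

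The central step is to obtain, for all sufficiently large $t$, an EQP basis $\bvec_1(t), \ldots, \bvec_n(t) \in \Z[t]^m$ of $\Lambda_t$ that is LLL-reduced. I expect that the proof of Theorem~\ref{thm:shortest} either provides such a basis directly or can be adapted to do so: each swap and size-reduction decision in LLL reduces to a polynomial-in-$t$ inequality, whose truth is determined by leading coefficients for large $t$, while size-reduction rounds of the form $\lfloor p(t)/q(t) \rceil$ (for $p,q \in \Z[t]$) are EQP in $t$ via polynomial division together with the standard fact that $\lfloor h(t) \rceil$ for $h(t) \in \Q[t]$ is quasi-polynomial with period dividing the common denominator of the coefficients of $h$. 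Given such a basis, I would run Babai's nearest-plane algorithm parametrically: iterate $i = n, n-1, \ldots, 1$, setting $c_i(t) = \lfloor \langle \wvec_i(t), \bvec_i^*(t)\rangle / \|\bvec_i^*(t)\|^2 \rceil$ and $\wvec_{i-1}(t) = \wvec_i(t) - c_i(t)\bvec_i(t)$, starting from $\wvec_n(t) = \xvec(t)$. The output $\uvec_0(t) = \sum_i c_i(t)\bvec_i(t)$ has EQP coordinates and satisfies $\|\uvec_0(t) - \xvec(t)\| \leq C_n\, d(\xvec(t), \Lambda_t)$ for a constant $C_n$ depending only on $n$.

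The true closest vector $\uvec^*(t)$ then differs from $\uvec_0(t)$ by some $\sum_i k_i \bvec_i(t) \in \Lambda_t$ with each $|k_i|$ bounded by a constant $K_n$ depending only on $n$ (by standard properties of LLL-reduced bases together with the $C_n$ approximation guarantee). There are only finitely many candidate offsets $\kvec \in \{-K_n, \ldots, K_n\}^n$, and for each one the squared distance $\|\uvec_0(t) + \sum_i k_i \bvec_i(t) - \xvec(t)\|^2$ is EQP --- in fact polynomial on each congruence class modulo the period. Selecting the minimum among finitely many polynomials in $t$ is controlled by leading-term comparisons for large $t$, so the minimizing offset $\kvec^*(t)$ is EQP (piecewise constant on each congruence class), with ties broken by a fixed lexicographic rule. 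Hence $\uvec^*(t) = \uvec_0(t) + \sum_i k_i^*(t)\bvec_i(t)$ has EQP coordinates, and finitely many exceptional small values of $t$ may be patched by direct computation.

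The principal obstacle is producing the EQP LLL-reduced basis, not merely extracting the EQP shortest vector guaranteed by Theorem~\ref{thm:shortest}: one must check that the iterated swap-and-reduce procedure terminates with controlled degree and period, even though intermediate entries in $\Q(t)$ can grow in complexity. A tempting alternative --- Kannan's embedding, which appends a coordinate $(\xvec(t), c(t))$ to the lattice and tries to read CVP off the SVP of the augmented lattice --- requires roughly $d(\xvec(t), \Lambda_t) < \lambda_1(\Lambda_t)$ so that the shortest vector of the extended lattice actually encodes CVP, and this fails in general. Consequently the Babai-plus-enumeration route appears the most robust, and verifying the parametric LLL reduction is the technical heart of this approach.
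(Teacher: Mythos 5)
Your overall architecture is the same as the paper's: obtain an eventually LLL-reduced basis with EQP coordinates (this is exactly the paper's Theorem~\ref{thm:LLLreduced}, so you are right that it is the technical heart), run a parametric Babai nearest-plane step with EQP rounding, restrict the true closest vector to finitely many candidates, and pick the eventual minimizer among finitely many EQP distance functions. However, there is a genuine gap at the pivotal quantitative step. You claim that the closest vector $\uvec^*(t)$ satisfies $\uvec^*(t)=\uvec_0(t)+\sum_i k_i\bvec_i(t)$ with every $|k_i|\le K_n$, ``by standard properties of LLL-reduced bases together with the $C_n$ approximation guarantee.'' The approximation guarantee does not yield this: from $\|\uvec^*-\uvec_0\|\le (1+C_n)\,d(\xvec,\Lambda_t)$ one can only bound the Gram--Schmidt coordinate of $\uvec^*-\uvec_0$ along $\bvec_i^*$ by $(1+C_n)\,d(\xvec,\Lambda_t)/\|\bvec_i^*\|$, and for $i<n$ the ratio $d(\xvec,\Lambda_t)/\|\bvec_i^*\|$ is not bounded by any function of $n$ alone (the Lov\'asz condition prevents $\|\bvec_{i+1}^*\|$ from being much shorter than $\|\bvec_i^*\|$, but it can be arbitrarily longer, and $d(\xvec,\Lambda_t)$ can be of order $\|\bvec_n^*\|$). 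So the one inequality you lean on does not bound the coefficients $k_i$ for $i<n$, only $k_n$.

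Your coefficient bound is in fact true, but proving it requires a level-by-level argument rather than a single norm comparison: for each $i$, fix the tail coefficients of $\uvec^*$, note that its truncation is a closest vector in the prefix sublattice $\Span_\Z\{\bvec_1,\dots,\bvec_i\}$ (whose basis is again LLL-reduced with the same Gram--Schmidt vectors) to the correspondingly shifted target, apply Babai's bound in dimension $i$ to get $|a_i-\nu_i^{(i)}|\le 2^{i/2-1}$ for the shifted real coordinate $\nu_i^{(i)}$, and then propagate through the size-reduced coefficients $|\rho_{j,i}|\le 1/2$ to compare with Babai's integer choices; this recursion gives a bound depending only on $n$. This is essentially what the paper does, except that it avoids the global box altogether: it bounds only the last coefficient, $|a_n-c_n|\le 2^{n/2-1}$ (its inequality (\ref{eqn:cvp_bound})), enumerates the $O(2^{n/2})$ candidate values of $a_n$, and recurses by induction on the dimension into $\Span_\Z\{\gvec_1,\dots,\gvec_{n-1}\}$ with the finitely many shifted targets, comparing the resulting EQP candidates at the end (after first projecting $\xvec$ onto $\Span_\R\Lambda_t$ via its Lemma~\ref{lem:orthog}, a step you should also include or replace by an argument that Babai's guarantee applies to targets outside the span). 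Your preliminary normalization to $\xvec\in\Z[t]^m$ and the patching of small $t$ are harmless but unnecessary, since only eventual behavior is required; the real repair needed is to replace the parenthetical justification of the $K_n$ bound by the inductive argument above (or simply adopt the paper's induction on $n$).
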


In proving both Theorem~\ref{thm:shortest} and Theorem~\ref{thm:closest}, we adapt the classical LLL reduction algorithm of Lenstra, Lov\'asz, and Lov\'asz \cite{LLL} to the parametric case. In particular, we obtain the following result, which is of independent interest:

\begin{thm} \label{thm:LLLreduced}
Let $\Lambda_t = \Span_\Z \{\fvec_1(t), \dots, \fvec_n(t)\} \subseteq \Z[t]^m$. Then there exists a set $\mathcal{B}_t = \{\gvec_1(t), \ldots, \gvec_{n'}(t)\}$ of EQP vectors such that for all sufficiently large $t$, we have that $\mathcal{B}_t$ is an LLL-reduced basis for $\Lambda_t$.\footnote{Note that we do not assume that the $\fvec_i$ are linearly independent, so possibly $n' < n$.}

\end{thm}

Finally, we note that in our main results above, our solutions only work \emph{eventually} (for all sufficiently large values of $t$). One might ask for effective bounds on the parameter controlling at which point our ``eventual'' solutions become correct (say, in terms of the degrees of the input polynomials and the sizes of their coefficients), but unfortunately it is not clear (or at least to us) how to obtain such bounds based on the techniques of this paper. So we leave this as a question for further research.

\subsection{Context and methods}

The SVP has a long history going back at least to Gauss \cite{gauss}, who gave a formula for $n=2$ dimensions. If we only need a solution to SVP which is \emph{approximately} correct within a factor of $2^{n/2}$, then the well-known polynomial-time ``LLL-algorithm'' of Lenstra, Lov\'asz, and Lov\'asz works \cite{LLL}; we will have much more to say about this below. Van Emde Boas showed that both SVP with respect to the $L_\infty$-norm and CVP with respect to the Euclidean norm are NP-hard \cite{vEB}, and conjectured that SVP with $L_2$ is NP-hard, although to our knowledge this question remains open today.  Henk \cite{Henk} showed that SVP is polynomial-time reducible to CVP, and Ajtai has proved that SVP is NP-hard under randomized reductions \cite{Ajtai}. Micciancio \cite{micciancio} has a more recent discussion of the somewhat delicate complexity issues here. It is also notable that Ajtai and Dwork \cite{AjtaiDwork} have developed a public-key cryptography system which is provably secure if in the worst case \emph{unique-SVP} (a closely related problem) has no probabilistic polynomial-time solution.

Many previous papers have noted that EQP functions are useful for counting and specification problems related to integer programming and properties of lattices. For example, Calegari and Walker  \cite{CW} gave EQP formulas for the vertices of integer hulls of certain families of polytopes presented as convex hulls of vectors parameterized by rational functions; Chen, Li, and Sam \cite{CLS} showed that the number of integer points in a polytope whose bounding hyperplanes are given by rational functions is EQP; and Shen \cite{Shen15b} has shown that solutions to integer programming problems parameterized by polynomials can be given by EQP formulas.

The hope that parametric shortest and closest lattice vector problems might also have EQP solutions came from considering one of our own previous results from \cite{BGW2017}. To explain this, we need the following generalization of the notion of a \emph{Presburger set} from mathematical logic:

\begin{definition} (from Woods \cite{Woods2014})
A \emph{parametric Presburger family} is a collection $\{S_t : t \in \N\}$ of subsets of $\Z^m$ which can be defined by a formula using addition, inequalities, multiplication and addition by constants from $\Z$, Boolean operations (and, or, not), multiplication by $t$, and quantifiers ($\forall$, $\exists$) on variables ranging over $\Z$. That is, such families are defined using quantifiers and Boolean combinations of formulas of the form $\avec(t)\cdot \xvec \le b(t),$ where $\avec (t)\in\Z[t]^m, b(t)\in\Z[t]$.
\end{definition}

\begin{thm} \cite{BGW2017} \label{thm:BGW}
  Let $\{S_t\}$ be a parametric Presburger family. Then:
  \begin{enumerate}
  \item There exists an EQP $g:\N\rightarrow\N$ such that, if $S_t$ has finite cardinality, then $g(t)=\abs{S_t}$. The set of $t$ such that $S_t$ has finite cardinality is eventually periodic.
  \item There exists a function $\xvec:\N\rightarrow\Z^m$, whose coordinate functions are EQPs, such that, if $S_t$ is non\-emp\-ty, then $\xvec(t)\in S_t$. The set of $t$ such that $S_t$ is nonempty is eventually periodic.
  \end{enumerate}
\end{thm}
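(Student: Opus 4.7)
The plan is to reduce Theorem~\ref{thm:BGW} to known counting and selection results for parametric polytopes by first establishing a parametric quantifier elimination. Since $\{S_t\}$ is defined by a formula built from atomic inequalities $\avec(t)\cdot\xvec \le b(t)$ using Boolean connectives and integer quantifiers, the first step is to show that, for $t$ sufficiently large, $S_t$ is definable by a quantifier-free formula in an extended language with congruence predicates modulo a fixed integer $N$.

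First I would eliminate quantifiers one at a time, adapting Cooper's procedure from ordinary Presburger arithmetic. To eliminate an $\exists y$, each atomic inequality involving $y$ has the shape $a(t)\,y + \bvec(t)\cdot \xvec \le c(t)$; for $t$ large the sign of $a(t)\in\Z[t]$ is fixed, so each such atom becomes either a lower or an upper bound on $y$. Pairwise combining the lower and upper bounds (after scaling by the positive polynomials $a(t)$) produces a formula in the remaining variables together with congruence conditions $\bvec(t)\cdot\xvec - c(t)\equiv r\pmod{a(t)}$. These congruences are not immediately parametric Presburger, but one shows that once $t$ is restricted to a residue class modulo a sufficiently large period $N$, each such condition reduces to finitely many congruences $\bvec'(t)\cdot\xvec\equiv r'\pmod{M}$ for a fixed $M$. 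Iterating through all quantifiers produces an eventually quantifier-free description of $S_t$ in terms of EQP linear inequalities together with congruences modulo a fixed period.

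Once $S_t$ is in this form, on each residue class of $t$ modulo $N$ the set $S_t$ becomes the intersection of a parametric polytope whose facet data are polynomial in $t$ with a sublattice of $\Z^m$ defined by the congruences. For part (1), counting integer points in such a parametric polytope is EQP by Chen, Li, and Sam \cite{CLS}; the eventual periodicity of the set of $t$ where $S_t$ is finite follows by examining the leading coefficients of the facet polynomials to detect unbounded directions. For part (2), after intersecting with the congruence sublattice, selecting a point in $S_t$ reduces to a parametric integer program (for instance, minimizing a generic lexicographic objective), which by Shen \cite{Shen15b} admits an EQP optimal solution; eventual periodicity of nonemptiness again follows from boundedness-type analysis of the leading terms.

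The main obstacle is the quantifier elimination step: the modulus $a(t)$ arising in the intermediate congruence conditions grows with $t$, so a priori the divisibility relation $a(t)\mid \bvec(t)\cdot\xvec - c(t)$ has unbounded complexity. The heart of the argument is to show that, modulo a fixed eventual period $N$, this relation collapses to a finite disjunction of congruences of bounded modulus. This is carried out by induction on formula complexity, with careful bookkeeping of how the periods introduced at each elimination stage combine, and has no direct analogue in the classical Presburger setting where all coefficients are constants.
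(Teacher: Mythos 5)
A point of context first: the paper does not actually prove Theorem~\ref{thm:BGW} --- it is imported verbatim from \cite{BGW2017} --- so there is no internal proof to compare against, and your proposal is really an attempt at reproving that cited result. Judged on its own terms, it has a genuine gap, located exactly at the step you yourself flag as the heart of the argument. When you eliminate $\exists y$ from atoms $a(t)\,y+\bvec(t)\cdot\xvec\le c(t)$ in Cooper style, the combination of lower and upper bounds produces divisibility conditions of the form $a(t)\mid \bvec(t)\cdot\xvec-c(t)+r$ with $a(t)\in\Z[t]$ nonconstant. Your claim that, after restricting $t$ to a residue class modulo a fixed $N$ and taking $t$ large, such a condition collapses to a finite disjunction of congruences of bounded modulus is false. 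Consider $S_t=\{x\in\Z:\exists q\,(x=tq)\}=t\Z$, a parametric Presburger family. For any fixed $M$, a set cut out by finitely many inequalities with EQP coefficients together with congruences modulo $M$ is, in the variable $x$ and on any unbounded region, a finite union of arithmetic progressions of common difference $M$; but $t\Z$ has gaps of length $t\to\infty$, so no bounded modulus can capture it, no matter how $t$ is restricted. Since this collapse is what reduces $S_t$ to ``parametric polytope intersected with a fixed sublattice,'' the subsequent appeals to \cite{CLS} for counting and to \cite{Shen15b} for selecting a point do not apply as stated, and the eventual-periodicity claims inherit the same problem.

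The actual argument in \cite{BGW2017} has to carry divisibility constraints with polynomially growing moduli through the induction rather than flatten them to a fixed modulus: the standard device is a substitution such as $y=a(t)q+r$ with $0\le r<a(t)$, which re-linearizes the divisibility at the cost of new quantified variables, together with bookkeeping showing the process terminates and yields EQP data. The remaining ingredients of your outline --- eventual sign stabilization of the leading coefficient of $a(t)$, and the reduction of part (2) to a parametric integer program once a genuine polyhedron-times-lattice-coset description is in hand (which is essentially how the paper derives Corollary~\ref{cor:IP}) --- are reasonable, but they all rest on the collapse step, so as written the proposal does not establish either part of the theorem.
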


\begin{cor} \label{cor:IP}
Suppose $S_t\subseteq \Z^m$ is a parametric Presburger family. Given $\cvec \in \Z^m\setminus\{0\}$, there exists a function $\xvec:\N \rightarrow \Z^m$ such that, if $\min_{\yvec\in S_t}  \cvec \cdot \yvec$ exists, then it is attained at $\xvec(t)\in S_t$, and the coordinate functions of $\xvec$ are EQPs. The set of  $t$ such that the maximum exists is eventually periodic.
\end{cor}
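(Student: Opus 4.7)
The plan is to reduce the corollary directly to Theorem~\ref{thm:BGW}(2) by encoding the set of optimal solutions as another parametric Presburger family. Specifically, I would define
\[
T_t = \bigl\{\xvec \in S_t : (\forall \yvec)\,(\yvec \in S_t \to \cvec \cdot \xvec \le \cvec \cdot \yvec)\bigr\}.
\]
Since $\{S_t\}$ is parametric Presburger and $\cvec \in \Z^m$ is a fixed integer vector, the condition defining $T_t$ is obtained from the defining formula of $S_t$ by conjoining one universal quantifier and one linear inequality with constant coefficients, so $\{T_t\}$ is again a parametric Presburger family.

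The key observation is then that $\min_{\yvec \in S_t} \cvec \cdot \yvec$ exists if and only if $T_t$ is nonempty. This uses the fact that $\cvec \cdot \yvec$ is integer-valued on $S_t \subseteq \Z^m$: if $S_t$ is nonempty and $\cvec \cdot \yvec$ is bounded below on $S_t$, then the infimum is attained and hence $T_t \neq \emptyset$; conversely, any element of $T_t$ directly witnesses that the minimum exists.

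With these two reductions in place, I would apply Theorem~\ref{thm:BGW}(2) to the family $\{T_t\}$. This yields an EQP function $\xvec : \N \to \Z^m$ such that $\xvec(t) \in T_t \subseteq S_t$ whenever $T_t \neq \emptyset$, together with the eventual periodicity of $\{t : T_t \neq \emptyset\}$. By the equivalence above, this $\xvec$ is exactly the selector of minimizers required by the corollary, and $\{t : T_t \neq \emptyset\}$ is precisely the set of $t$ on which the minimum exists. The only point that requires care is the first step, namely verifying that the parametric Presburger fragment really is closed under a universal quantifier combined with a linear inequality whose coefficients come from the fixed $\cvec$; this is immediate from the definition, so I do not anticipate any serious obstacle in the argument.
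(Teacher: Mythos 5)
Your proposal is correct and is essentially the argument the paper intends (the corollary is stated without an explicit proof, as an immediate consequence of Theorem~\ref{thm:BGW}): the set of minimizers $T_t$ is definable by conjoining a universal quantifier and the integer-coefficient inequality $\cvec\cdot\xvec\le\cvec\cdot\yvec$, hence is again a parametric Presburger family, and integrality of $\cvec\cdot\yvec$ gives that the minimum exists exactly when $T_t\neq\emptyset$, so Theorem~\ref{thm:BGW}(2) yields the EQP selector and the eventual periodicity.
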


Note that  $\Lambda_t = \Span_\Z \{\fvec_1(t), \dots, \fvec_n(t)\} $ is a parametric Presburger family, as is
\[S_t=\Big\{(y,\xvec):\ \xvec\in\Lambda_t\ \wedge\ \xvec\ne  \mathbf 0\ \wedge\ ||\xvec||_1\le y\Big\},\]
where $||\xvec||_1$ is the $L_1$-norm, $\sum_{i=1}^m | x_i |$. Therefore, if we wanted to minimize the $L_1$-norm over the nonzero vectors in  a parametric lattice, we could apply Corollary \ref{cor:IP} to $S_t$ with $c=(1,\mathbf 0)$, and we would already be guaranteed that the minimum and the argmin are specified by EQP functions. More generally, if $\Lambda_t$ is a parametric lattice (or in fact any Presburger family), then minimizing any \emph{polyhedral norm} will yield an EQP, by the same reasoning. A polyhedral norm is defined by taking a full dimensional, centrally symmetric polytope $P$, and defining $||\xvec|| = \min\{t:\ \xvec \in tP\}$. This includes $L_1$ ($P$ is the cross-polytope) and $L_{\infty}$ ($P$ is the cube).

However, it is not clear (to us) how Theorem~\ref{thm:BGW} could be applied to finding the shortest vector in the \emph{Euclidean} norm: parametric Presburger families require formulas that are \emph{linear} in the variables $\xvec$. To the best of our knowledge, this present paper is the first result in this area where a \emph{nonlinear} setup still yields EQP results.

Our solution to Question~\ref{main_quest} is based on parametric generalizations of standard techniques for analyzing integer lattices such as LLL reduction and Babai's \cite{Babai86} nearest plane method.

\subsection{Outline of paper}

This paper is organized as follows. First, in Section 2 we recall the original (non-parametric) LLL algorithm for finding lattice bases which are ``almost orthogonal'' and whose $i$-th element approximates the $i$-th longest vector (up to a constant factor). In Section 3, we prove that there is a parametric version of the LLL algorithm (Theorem~\ref{thm:LLLreduced}), which constitutes the bulk of the paper. In Section 4 we quickly deduce from this that the shortest nonzero vector of a parametric lattice is EQP (Theorem~\ref{thm:shortest}) and we adapt Babai's nearest plane method to find an EQP formula for a closest lattice vector (Theorem~\ref{thm:closest}).


\section{Review of LLL-reduced bases}

In this section, we will briefly review the ``classical'' (non-parametric) LLL basis reduction algorithm, introducing some concepts and notation which will be used later in Section 3. Most of this section may be skipped by the reader who already has expertise in this area, though we do establish some important notational conventions (such as $\fvec_1^*, \ldots, \fvec_n^*$ for the \emph{non-normalized} Gram-Schmidt vectors corresponding to the ordered lattice basis $\fvec_1, \ldots, \fvec_n$)

To motivate LLL reduction, first observe that if we have an ordered lattice basis $\fvec_1, \ldots, \fvec_n$ for $\Lambda \subseteq \Z^m$ which happens to be both orthogonal and ordered by length, then  the shortest vector problem is trivial (select $\fvec_1$) and the closest vector problem is easy (project the vector onto the $\R$-span of $\fvec_1, \ldots, \fvec_n$, write the new vector in this basis, and then round off coordinates to the nearest integer). Of course a lattice may have no orthogonal basis, and one way to explain the usefulness of LLL-reduced lattices is that they are \emph{approximately orthogonal} and are \emph{approximately ordered by length} (in a sense to be made more precise below).

Our treatment of the LLL algorithm is based on Galbraith's text \cite[Chapter 17]{Galbraith}. We begin with Gram-Schmidt orthogonalization (Algorithm \ref{alg:GS}), reproduced directly from \cite[Algorithm 24]{Galbraith} except that we replace $\R$ by an arbitrary field $\F$ which is \emph{formally real}: that is, $\F$ can be made into an ordered field, or equivalently a sum of squares of elements of $\F$ is $0$ only if each of those elements is $0$ (a hypothesis which guarantees we will not divide by $0$ at Step 5).

\begin{algorithm} \caption{Gram-Schmidt algorithm without normalization} \label{alg:GS} 
\begin{algorithmic}[1] 
  \Statex \textbf{INPUT:} $\{\bvec_1, \dots, \bvec_n\}$ in $\F^m$
  \Statex \textbf{OUTPUT:} $\{\bvec_1^\ast, \dots, \bvec_n^\ast\}$ in $\F^m$
  \State $\bvec_1^\ast = \bvec_1$
  \For{$i = 2$ to $n$}
    \State $v = \bvec_i$
    \For{$j = i-1$ down to 1}
      \State $\mu_{i,j} = \langle \bvec_i, \bvec_j^\ast \rangle / \langle \bvec_j^\ast, \bvec_j^\ast \rangle$
      \State $v = v - \mu_{i,j} \bvec_j^\ast$
    \EndFor
    \State $\bvec_i^\ast = v$
  \EndFor
  \State \textbf{return} $\{\bvec_1^\ast, \dots, \bvec_n^\ast\}$
\end{algorithmic}
\end{algorithm}

The two formally real fields over which we will apply the algorithm are $\Q$ and $\Q(t)$.


\begin{remark} It is important that we do not use a variant of the Gram-Schmidt algorithm that normalizes the output vectors. Normalization would require dividing by square roots of arbitrary elements of the field $\F$. Neither $\Q$ nor $\Q(t)$ contains such square roots. 
\end{remark}

Since we cannot apply Gram-Schmidt over a ring such as $\Z$ that is not a field, we can only ask for a lattice basis that is approximately orthogonal. For applications it is also useful for the basis to consist of short vectors and to be efficiently computable, for example in the following sense. 

\begin{definition}
  Let $B = (\bvec_1, \dots, \bvec_n)$ be an ordered basis of a lattice $L \subseteq \Z^m$ and $B^\ast = (\bvec_1^\ast, \dots, \bvec_n^\ast)$ be the list of vectors in $\Q^m$ obtained by applying the Gram-Schmidt algorithm without normalization to $(\bvec_1, \dots, \bvec_n)$. For $1 \leq j < i \leq n$, let $\mu_{i,j} = \frac{\langle \bvec_i, \bvec_j^\ast \rangle}{\langle \bvec_j^\ast, \bvec_j^\ast \rangle}$ be the coefficients defined in the Gram-Schmidt process.

  Let $1/4 < \delta < 1$. The basis is \emph{LLL-reduced} (with factor $\delta$) if the following conditions hold:
  \begin{itemize}
  \item (Size reduced)  $|\mu_{i,j}| \leq 1/2$ for $1 \leq j < i \leq n$.  
  \item (Lov\'asz condition) $\| \bvec^*_i \|^2 \geq \left( \delta - \mu_{i,i-1}^2 \right) \| {\bvec^*_{i-1}} \|^2$ for $2 \leq i \leq n$.
  \end{itemize}
\end{definition}

To motivate our definition of LLL-reduced in the parametric context, we note that the Lov\'asz condition can be rewritten as $$\frac{\| \bvec^*_i \|^2}{\| {\bvec^*_{i-1}} \|^2} + \mu_{i,i-1}^2 \geq \delta$$ for $2 \leq i \leq n.$ This version of the Lov\'asz condition has a nice geometric interpretation: an elementary calculation shows that the left-hand side of the inequality above is equal to $\frac{\| \cvec_i \|^2 }{ \| \bvec_{i-1}^* \|^2}$, where $\cvec_i$ is the projection of $\bvec_i$ onto the plane spanned by $\bvec_{i-1}^*$ and $\bvec_i^*$.

Why are LLL-reduced bases nice? Several reasons are given in \cite[Chapter 17]{Galbraith}:

\begin{fact}
\label{fact:Galbr}
(\cite{Galbraith}, Theorem 17.2.12) Suppose that $B = (\bvec_1, \dots, \bvec_n)$ is an LLL-reduced basis for a lattice $\Lambda$ with $\delta = 3/4$. We recursively select vectors $\wvec_1, \ldots, \wvec_n \in \Lambda$ and quantities $\lambda_1 \leq \lambda_2 \leq \ldots \leq \lambda_n$(known as the ``successive minima'') such that for each $i \in \{1, \ldots, n\}$,

\begin{itemize}
\item $\lambda_i = \max \{ \| \wvec_1 \|, \ldots, \| \wvec_i \| \}$, and
\item if $i < n$ then $\wvec_{i+1}$ is a shortest vector in $\Lambda$ which is linearly independent from $\{\wvec_1, \ldots, \wvec_i\}$.
\end{itemize}
(So $\lambda_1$ is the length of a shortest nonzero vector in $\Lambda$.)

Then:

\begin{itemize}
\item $\| \bvec_1 \| \leq 2^{(n-1)/2} \lambda_1$;
\item $2^{(1-i)/2} \lambda_i \leq \| \bvec_i \| \leq 2^{(n-1)/2} \lambda_i$; and
\item $\det (\Lambda) \leq \prod_{i=1}^n \| \bvec_i \| \leq 2^{ {n (n-1)}/4} \det( \Lambda).$
\end{itemize}

\end{fact}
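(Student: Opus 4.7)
The plan is to reduce everything to a geometric decay bound on the Gram-Schmidt lengths, together with an auxiliary coordinate observation. Combining the size-reduction bound $|\mu_{i,i-1}| \leq 1/2$ with the Lov\'asz condition at $\delta = 3/4$ gives $\|\bvec_i^*\|^2 \geq \tfrac{1}{2}\|\bvec_{i-1}^*\|^2$, so iterating yields $\|\bvec_j^*\|^2 \leq 2^{i-j}\|\bvec_i^*\|^2$ for all $j \leq i$. Together with the standard identity $\det(\Lambda) = \prod_i \|\bvec_i^*\|$ (from the unitriangular change of basis expressing the $\bvec_i$ in terms of the $\bvec_i^*$) and the trivial $\|\bvec_i\| \geq \|\bvec_i^*\|$, this immediately delivers the left-hand inequality of bullet (3). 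Substituting the decay estimate and the size-reduction bounds into the Gram-Schmidt expansion $\|\bvec_i\|^2 = \|\bvec_i^*\|^2 + \sum_{j<i}\mu_{i,j}^2 \|\bvec_j^*\|^2$ yields, after a short geometric sum, the inequality $\|\bvec_i\|^2 \leq 2^{i-1}\|\bvec_i^*\|^2$; multiplying this over $i$ gives the right-hand inequality of bullet (3).

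The key auxiliary lemma for the other two bullets states that for any nonzero $\yvec = \sum_{j=1}^n a_j \bvec_j \in \Lambda$ with $a_j \in \Z$, if $l = \max\{j : a_j \neq 0\}$ then $\|\yvec\|^2 \geq a_l^2 \|\bvec_l^*\|^2 \geq \|\bvec_l^*\|^2$; this follows since rewriting $\yvec$ in the Gram-Schmidt basis leaves $a_l$ as the coefficient of $\bvec_l^*$ and orthogonality does the rest. Applying this to a vector of length $\lambda_1$ produces an index $l \leq n$ with $\lambda_1^2 \geq \|\bvec_l^*\|^2 \geq 2^{1-l}\|\bvec_1^*\|^2 \geq 2^{1-n}\|\bvec_1\|^2$, which rearranges to bullet (1).

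For bullet (2), the lower half $\lambda_i \leq 2^{(i-1)/2}\|\bvec_i\|$ is easy: $\bvec_1, \ldots, \bvec_i$ are linearly independent lattice vectors, so $\lambda_i \leq \max_{j \leq i}\|\bvec_j\|$, and chaining the decay estimate with $\|\bvec_j\|^2 \leq 2^{j-1}\|\bvec_j^*\|^2$ gives $\|\bvec_j\|^2 \leq 2^{i-1}\|\bvec_i\|^2$ for each $j \leq i$. The upper half $\|\bvec_i\| \leq 2^{(n-1)/2}\lambda_i$ is the subtler step: the linear independence of $\wvec_1, \ldots, \wvec_i$ forces at least one $\wvec_{j_0}$ to have leading index $l(j_0) \geq i$, since otherwise all of them would lie in $\Span(\bvec_1, \ldots, \bvec_{i-1})$. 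The auxiliary lemma combined with the decay estimate (now applied upward) yields $\lambda_i^2 \geq \|\wvec_{j_0}\|^2 \geq \|\bvec_{l(j_0)}^*\|^2 \geq 2^{i-l(j_0)}\|\bvec_i^*\|^2 \geq 2^{i-n}\|\bvec_i^*\|^2$, and combining with $\|\bvec_i\|^2 \leq 2^{i-1}\|\bvec_i^*\|^2$ completes the argument.

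I expect this last pigeonhole step to be the main obstacle, since it is the only place in the argument where one must exploit the joint linear independence of the $\wvec_j$ rather than work with each vector individually, and it simultaneously calls on the coordinate lemma, the geometric decay estimate in both directions, and the Gram-Schmidt expansion. Everything else reduces to routine bookkeeping once the decay bound on the $\|\bvec_i^*\|$ is in hand.
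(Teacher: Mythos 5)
Your argument is correct: the paper does not prove this statement at all (it is quoted from Galbraith, Theorem 17.2.12), and your proof is essentially the standard textbook argument behind that citation --- deriving $\|\bvec_{i}^*\|^2 \geq \tfrac12 \|\bvec_{i-1}^*\|^2$ from the Lov\'asz and size-reduction conditions, using $\det(\Lambda)=\prod_i\|\bvec_i^*\|$ and $\|\bvec_i\|^2 \leq 2^{i-1}\|\bvec_i^*\|^2$ for the determinant bounds, and the leading-coefficient lemma plus the pigeonhole step on the $\wvec_j$ for the bounds against $\lambda_i$. All the individual estimates check out (including the geometric sum giving $\|\bvec_i\|^2\le 2^{i-1}\|\bvec_i^*\|^2$ and the inequality $\lambda_i \le \max_{j\le i}\|\bvec_j\|$ for the greedily chosen $\wvec_j$), so no gap remains.
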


The last condition in the fact above can be thought of as showing that the basis is ``almost orthogonal,'' as the volume of the lattice is close to the product of the lengths of the basis elements (within a constant factor).

For our solution to the parametric Shortest Vector Problem below, the following result from Barvinok's text \cite{Barv08} will be useful:

\begin{fact}
\label{fact:coefbounds}
(\cite{Barv08}, Lemma 12.7) Suppose that $(\bvec_1, \ldots, \bvec_n)$ is an LLL-reduced basis for the lattice $\Lambda$ with $\delta = 3/4$, and $\lambda_1$ is the length of a shortest nonzero vector in $\Lambda$. Then for any $\uvec \in \Lambda$ such that $\| \uvec \| \leq \beta \lambda_1$ (for any constant $\beta \geq 1$), if $$\uvec = \sum_{i=1}^n m_i \bvec_i,$$ then $|m_i| \leq 3^n \beta$ for any $i \in \{1, \ldots, n\}$.

In particular, letting $\beta = 1$, we see that a shortest nonzero vector in $\Lambda$ must have a representation with coefficients $|m_i | \leq 3^n$.
\end{fact}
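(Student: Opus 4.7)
The plan is to express $\uvec$ in the Gram-Schmidt basis $\bvec_1^*, \dots, \bvec_n^*$, first bounding the resulting coefficients using orthogonality and the Lov\'asz condition, and then back-solving for the integer coefficients $m_i$ using size reducedness. Explicitly, writing
\[\uvec = \sum_{i=1}^n m_i \bvec_i = \sum_{i=1}^n c_i \bvec_i^*\]
and expanding $\bvec_j = \bvec_j^* + \sum_{k<j} \mu_{j,k} \bvec_k^*$ yields the upper-triangular relation $c_i = m_i + \sum_{j>i} m_j \mu_{j,i}$, with $1$'s on the diagonal. So the $m_i$ can be recovered from the $c_i$ by back-substitution, and it suffices first to bound the $c_i$.

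For the first step, orthogonality of the $\bvec_i^*$ gives $\|\uvec\|^2 = \sum c_i^2 \|\bvec_i^*\|^2$, hence $|c_i| \leq \beta \lambda_1 / \|\bvec_i^*\|$, so I need a uniform lower bound on $\|\bvec_i^*\|$ in terms of $\lambda_1$. The Lov\'asz condition with $\delta = 3/4$ together with $\mu_{i,i-1}^2 \leq 1/4$ gives $\|\bvec_i^*\|^2 \geq \tfrac{1}{2}\|\bvec_{i-1}^*\|^2$; iterating this inequality yields $\|\bvec_i^*\|^2 \geq 2^{1-i}\|\bvec_1^*\|^2$, and since $\bvec_1^* = \bvec_1$ is itself a nonzero lattice vector, $\|\bvec_1^*\| \geq \lambda_1$. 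Combining, $\|\bvec_i^*\| \geq 2^{(1-n)/2}\lambda_1$ and hence $|c_i| \leq 2^{(n-1)/2}\beta$.

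The second step is a routine downward induction on $i$. From $m_i = c_i - \sum_{j>i} m_j \mu_{j,i}$ and $|\mu_{j,i}| \leq 1/2$ I get the recursion $|m_i| \leq |c_i| + \tfrac{1}{2}\sum_{j>i}|m_j|$, which telescopes (with the uniform bound $|c_j| \leq 2^{(n-1)/2}\beta$) to $|m_i| \leq (3/2)^{n-i}\cdot 2^{(n-1)/2}\beta \leq 3^n\beta$. The one subtle ingredient is the lower bound on $\|\bvec_i^*\|$: the Lov\'asz condition by itself only prevents $\|\bvec_i^*\|$ from \emph{decreasing} too quickly as $i$ grows, so the decay has to be anchored at $i=1$ by exploiting the fact that $\bvec_1$ is literally a vector of the lattice --- the one place in the argument where lattice integrality enters in an essential way.
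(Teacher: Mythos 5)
Your argument is correct, and since the paper gives no proof of this Fact (it is quoted from \cite{Barv08}), there is nothing to diverge from: your proof is the standard one used there, namely bounding the Gram--Schmidt coordinates $c_i$ of $\uvec$ via the chained Lov\'asz inequality $\|\bvec_i^*\|^2 \geq 2^{1-i}\|\bvec_1^*\|^2$ anchored at $\|\bvec_1^*\| = \|\bvec_1\| \geq \lambda_1$, and then recovering the integer coefficients by back-substitution using $|\mu_{j,i}| \leq 1/2$. Note that the exponent $d$ in the statement is a typo for the rank $n$, and your explicit estimate $|m_i| \leq (3/2)^{n-i} 2^{(n-1)/2}\beta \leq 3^n \beta$ confirms the intended bound.
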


For the Closest Vector Problem, Babai's ``nearest plane method'' \cite{Babai86} also gives an answer in the unparameterized case which is correct up to a constant factor:

\begin{fact}
\label{fact:babai}
\cite[Theorem 3.1]{Babai86}
If $(\bvec_1, \ldots, \bvec_n)$ is an LLL-reduced basis for a lattice $\Lambda \subseteq \Z^n$ with $\delta = 3/4$, then for any $\xvec \in \R^n$ there is a lattice point $\wvec \in \Lambda$ such that $$\| \xvec - \wvec \| \leq 2^{n/2 - 1} \| \bvec_n^* \|.$$
\end{fact}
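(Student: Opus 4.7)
The plan is to build the desired $\wvec$ by a greedy, top-down procedure in the Gram--Schmidt basis. Expand $\xvec = \sum_{i=1}^n \alpha_i \bvec_i^*$ with $\alpha_i \in \R$, and note that any candidate $\wvec = \sum_{i=1}^n m_i \bvec_i \in \Lambda$ can be rewritten as $\wvec = \sum_{i=1}^n \beta_i \bvec_i^*$ where $\beta_i = m_i + \sum_{k > i} m_k \mu_{k,i}$, using $\bvec_k = \bvec_k^* + \sum_{j < k} \mu_{k,j} \bvec_j^*$. Because the $\bvec_i^*$ are orthogonal, the squared error splits as
\[
\|\xvec - \wvec\|^2 = \sum_{i=1}^n (\alpha_i - \beta_i)^2 \, \|\bvec_i^*\|^2,
\]
so the task reduces to making each factor $|\alpha_i - \beta_i|$ small while having enough control on $\|\bvec_i^*\|$ through $\|\bvec_n^*\|$.

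Next I would select the integer coefficients in the order $i = n, n-1, \ldots, 1$: put $m_n := \lfloor \alpha_n \rceil$, and for $i < n$ put $m_i := \bigl\lfloor \alpha_i - \sum_{k > i} m_k \mu_{k,i} \bigr\rceil$. The key observation is that $\beta_i$ depends only on $m_i$ and the \emph{already chosen} $m_{i+1}, \ldots, m_n$, so this recursion is well-defined and at every step forces $|\alpha_i - \beta_i| \leq 1/2$. Geometrically, this is the ``nearest plane'' method: at each step one selects the hyperplane in the affine family $\bvec_i^*\!$-direction that is closest to the current residual, then descends.

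The last step is to translate the size of $\|\bvec_i^*\|$ into that of $\|\bvec_n^*\|$ using the LLL hypotheses. Since $|\mu_{i,i-1}| \leq 1/2$ by size reduction, the Lov\'asz condition with $\delta = 3/4$ gives $\|\bvec_i^*\|^2 \geq (3/4 - 1/4) \|\bvec_{i-1}^*\|^2 = \tfrac{1}{2} \|\bvec_{i-1}^*\|^2$, and iterating yields $\|\bvec_i^*\|^2 \leq 2^{n-i} \|\bvec_n^*\|^2$. Substituting in the error sum:
\[
\|\xvec - \wvec\|^2 \;\leq\; \frac{1}{4} \sum_{i=1}^n 2^{n-i} \|\bvec_n^*\|^2 \;<\; 2^{n-2} \|\bvec_n^*\|^2,
\]
which is exactly the claimed bound $\|\xvec - \wvec\| \leq 2^{n/2 - 1}\|\bvec_n^*\|$.

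No step is genuinely hard, but the two subtle points are: (i) choosing the coefficients in the \emph{reverse} order so that errors at level $i$ are decoupled from later choices, and (ii) noticing that the rather weak Lov\'asz inequality, when iterated, still produces a geometric series in $2^{n-i}$ that can be absorbed into a single factor of $2^{n/2-1}$ in front of $\|\bvec_n^*\|$.
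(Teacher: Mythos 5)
Your argument is correct and is essentially the standard one: the paper states this as a Fact cited from Babai's paper without reproducing a proof, and your greedy top-down rounding in the Gram--Schmidt frame, with $|\alpha_i-\beta_i|\leq 1/2$ at each level and the iterated Lov\'asz bound $\|\bvec_i^*\|^2\leq 2^{n-i}\|\bvec_n^*\|^2$, is exactly Babai's nearest plane method. The geometric-series computation $\tfrac14\sum_{i=1}^n 2^{n-i} < 2^{n-2}$ correctly yields the claimed bound $\|\xvec-\wvec\|\leq 2^{n/2-1}\|\bvec_n^*\|$.
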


Given these nice properties of LLL-reduced bases, it is important to know that they always exist:

\begin{fact} 
\label{thm:classicLLL}
(Lenstra, Lenstra, and Lov\'asz \cite{LLL}) Every sublattice $\Lambda \subseteq \Z^m$ has an LLL-reduced basis.  
\end{fact}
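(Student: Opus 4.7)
The plan is to prove existence constructively, via the original LLL algorithm. Start from any basis $\bvec_1,\dots,\bvec_n$ of $\Lambda$ (for instance, an HNF basis), then iteratively apply two operations while either defect persists: (i) a \emph{size-reduction step} which, for some $1 \le j < i \le n$ with $|\mu_{i,j}| > 1/2$, replaces $\bvec_i$ by $\bvec_i - \lfloor \mu_{i,j} \rceil \bvec_j$ (processing $j$ from $i-1$ down to $1$); and (ii) a \emph{swap step} which, for some $i$ violating the Lov\'asz condition, swaps $\bvec_{i-1}$ and $\bvec_i$. When neither operation applies, the basis is LLL-reduced by definition. Both operations preserve the lattice $\Lambda$, since they are unimodular changes of basis.

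Next I would verify that each intermediate $\{\bvec_1,\dots,\bvec_n\}$ remains a $\Z$-basis of $\Lambda$ and track the effect on the Gram--Schmidt vectors $\bvec_i^*$ and coefficients $\mu_{i,j}$. A short calculation shows that a size-reduction step at index $(i,j)$ leaves every $\bvec_k^*$ unchanged (since it subtracts a vector in $\Span(\bvec_1^*,\dots,\bvec_{i-1}^*)$ from $\bvec_i$), and only modifies $\mu_{i,k}$ for $k \le j$. After processing $j = i-1, i-2, \dots, 1$, all of $|\mu_{i,1}|, \dots, |\mu_{i,i-1}|$ become $\le 1/2$. A swap of $\bvec_{i-1}$ and $\bvec_i$ leaves $\bvec_k^*$ unchanged for $k \notin \{i-1,i\}$, while the new $\bvec_{i-1}^*$ and $\bvec_i^*$ can be computed explicitly; in particular $\|\bvec_{i-1}^*\|^2_{\new} = \|\bvec_i^*\|^2 + \mu_{i,i-1}^2 \|\bvec_{i-1}^*\|^2$, which by the failing Lov\'asz condition is strictly less than $\delta \|\bvec_{i-1}^*\|^2$.

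The main obstacle, and the heart of the argument, is termination. The standard device is the potential
\[
D \;=\; \prod_{i=1}^{n} d_i, \qquad d_i \;=\; \det\!\bigl(\langle \bvec_k,\bvec_\ell\rangle\bigr)_{1\le k,\ell\le i} \;=\; \prod_{k=1}^{i} \|\bvec_k^*\|^2.
\]
Each $d_i$ is the squared covolume of the sublattice generated by $\bvec_1,\dots,\bvec_i$, so $d_i \in \Z_{>0}$, and hence $D \in \Z_{>0}$. Size-reduction steps leave every $\bvec_k^*$ fixed and therefore leave $D$ unchanged. A swap at position $i$ affects only $d_{i-1}$ (since $d_k$ for $k \ne i-1$ is determined by the unordered set of the first $k$ or first $i$ Gram--Schmidt vectors), and the inequality above gives $d_{i-1}^{\new} < \delta\, d_{i-1}^{\old}$, so $D$ decreases by a factor less than $\delta < 1$. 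Since $D$ is a positive integer bounded above by its initial value, only finitely many swaps can occur; between swaps, the size-reduction phase terminates in at most $\binom{n}{2}$ steps.

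Finally I would observe that when the algorithm halts, neither operation is applicable, so the output basis is size-reduced and satisfies the Lov\'asz condition; combined with the invariance of $\Lambda$ under the moves, this yields an LLL-reduced basis of $\Lambda$. The same schema (iteratively repair size-reduction and Lov\'asz violations, measure progress by a positive-integer potential) is what we will need to adapt in Section 3 to the parametric setting, where the main challenge will be that ``$|\mu_{i,j}| \le 1/2$'' and ``Lov\'asz at $\delta$'' become conditions about rational functions in $t$ whose truth depends on the residue class of $t$, and the potential $D$ is no longer a single integer but an EQP in $t$.
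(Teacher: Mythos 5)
Your proposal is correct and follows essentially the same route as the paper: run the classical LLL algorithm (size reductions plus Lov\'asz swaps) and prove termination via the integer potential $D=\prod_i d_i$ with $d_i=\prod_{k\le i}\|\bvec_k^*\|^2$, which is exactly the argument the paper sketches after Algorithm~\ref{alg:LLL} (the paper takes the product only up to $n-1$, but since $d_n$ is invariant this makes no difference). No gaps to report.
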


Since all lattice bases are equivalent up to unimodular transformation, we can state and apply the following explicit consequence of Fact \ref{thm:classicLLL}:
\begin{cor}
\label{cor:constructiveLLL}
If $(\avec_1, \dots, \avec_n)$ is any basis of $\Lambda$, then there exists a unimodular matrix $U \in \Z^{m \times m}$ such that $(\bvec_1 \dots \bvec_n) = U (\avec_1 \dots \avec_n)$ is LLL-reduced. 
\end{cor}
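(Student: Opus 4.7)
The plan is to invoke Fact~\ref{thm:classicLLL} directly and then apply the standard fact that any two $\Z$-bases of the same lattice are related by a unimodular integer transformation. First I would apply Fact~\ref{thm:classicLLL} to $\Lambda$ to obtain some LLL-reduced basis $(\bvec_1, \dots, \bvec_n)$. The goal is then to show that this particular basis can be obtained from the given basis $(\avec_1, \dots, \avec_n)$ by multiplying by a unimodular matrix.

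For that step, I would observe that since $\Lambda = \Span_\Z\{\avec_1, \dots, \avec_n\}$, each $\bvec_i \in \Lambda$ admits a (unique, since the $\avec_j$ are linearly independent over $\Q$) integer expression $\bvec_i = \sum_j u_{ij}\, \avec_j$, defining an integer matrix $U = (u_{ij})$. Symmetrically, $\bvec_1,\dots,\bvec_n$ is also a $\Z$-basis of $\Lambda$, so there is an integer matrix $V$ with $\avec_j = \sum_i v_{ji}\, \bvec_i$. Substituting one expression into the other and using that both $(\avec_i)$ and $(\bvec_i)$ are bases gives $UV = VU = I$, hence $\det(U)\det(V)=1$; since both determinants are integers, $\det(U)=\pm 1$, so $U$ is unimodular. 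Writing the relation $\bvec_i = \sum_j u_{ij}\, \avec_j$ as a matrix equation (with $U$ on the appropriate side according to the paper's column/row convention) yields the stated identity.

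There is essentially no obstacle here: the content is exactly Fact~\ref{thm:classicLLL} together with the elementary fact that transitions between bases of a free $\Z$-module of rank $n$ are unimodular. The only thing to watch is the convention for writing $(\bvec_1\,\cdots\,\bvec_n) = U(\avec_1\,\cdots\,\avec_n)$ as a matrix product; the dimensions of $U$ in the stated equation must be reconciled with the dimensions of the $m\times n$ matrices $(\avec_1\,\cdots\,\avec_n)$ and $(\bvec_1\,\cdots\,\bvec_n)$, so in practice $U$ is naturally of size $n\times n$ acting on the right (or its transpose acting on the left after transposing the basis matrices). This is a purely notational reformulation of the existence statement in Fact~\ref{thm:classicLLL}, which is what makes the corollary ``explicit'' in the sense that it packages the output of the LLL algorithm as a linear algebra operation applied to any chosen starting basis, suitable for the parametric adaptation in Section~3.
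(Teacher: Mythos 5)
Your argument is correct and is exactly the paper's (implicit) justification: the corollary is stated as an immediate consequence of Fact~\ref{thm:classicLLL} together with the standard fact that any two $\Z$-bases of a lattice differ by a unimodular change of basis, which you spell out via the mutual integer expressions and $\det(U)\det(V)=1$. Your remark about the matrix dimensions is also apt --- the change-of-basis matrix is naturally $n\times n$ (as the paper itself uses later, e.g.\ in Proposition~\ref{prop:samedegree}), so the ``$U\in\Z^{m\times m}$'' in the statement is just a notational slip.
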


The algorithm for reducing parametric lattices will make use of an adaptation of the ``classic'' LLL reduction algorithm of Lenstra, Lenstra, and Lov\'asz, so we will briefly review it  here (Algorithm \ref{alg:LLL}) and sketch the proof of Fact~\ref{thm:classicLLL}. All of the ideas are from \cite{LLL}, but we will follow the exposition of \cite[Algorithm 25]{Galbraith} closely, except that we assume that the input vectors have integer coordinates. This implies that at every step of the algorithm, the lattice basis vectors $\bvec_i$ (in particular, at the end when they form an LLL basis) are integer and the Gram-Schmidt vectors and the coefficients $\mu_{i,j}$ are rational.

\begin{algorithm}  \caption{LLL algorithm with Euclidean norm (typically, choose $\delta = 3/4$)} 
\label{alg:LLL}
\begin{algorithmic}[1]
\Statex \textbf{INPUT:} $\{\B_1, \dots, \B_n\}$ in $\Z^m$, linearly independent over $\Q$
\Statex \textbf{OUTPUT:} An LLL-reduced basis $\{\B_1, \dots, \B_n\}$ in $\Z^m$
\State Compute the Gram-Schmidt basis $\B_1^\ast, \dots, \B_n^\ast$ and coefficents $\mu_{i,j}$ for $1 \leq j < i \leq n$
\State Compute
$B_i = \langle \B_i^\ast, \B_i^\ast \rangle = \| \B_i^\ast \|^2$
for $1 \leq i \leq n$
\State $k=2$
\While{$k \leq n$}
  \For{$j = (k-1)$ down to 1 } \Comment{Perform size reduction}
    \State Let $q_j = \lfloor \mu_{k,j} \rceil$ and set $\B_k = \B_k - q_j\B_j$ \Comment{$\lfloor x \rceil$ denotes closest integer to $x$}
    \State Update the values $\mu_{k,j}$ for $1 \leq j \leq k$
  \EndFor
  \If{$B_k \geq \left(\delta - \mu_{k,k-1}^2\right) B_{k-1}$ } \Comment{Check Lov\'asz condition}
    \State $k = k+1$
  \Else
    \State Swap $\B_k$ with $\B_{k-1}$
    \State Update the values $\B_k^\ast$, $\B_{k-1}^\ast$, $B_k$, $B_{k-1}$, $\mu_{k-1,j}$ and $\mu_{k,j}$ for $1 \leq j < k$, and $\mu_{i,k}, \mu_{i,k-1}$ for $k < i \leq n$
    \State $k = \max\{2, k-1\}$
  \EndIf
\EndWhile  
\end{algorithmic}
\end{algorithm}

It is not too difficult to check that Algorithm~\ref{alg:LLL}, \emph{if} it halts, will output an LLL-reduced basis for the lattice generated by the input basis. For future reference, we recall the argument that Algorithm~\ref{alg:LLL} always terminates after finitely many steps. At any given stage while carrying out the algorithm, we have a lattice basis $\{\B_1, \dots, \B_n\}$, and suppose that we arrive at Line 12 and must swap $\B_k$ and $\B_{k-1}$. Define $B_{(i)}$ to be the $i \times m$ matrix whose rows are given by $\B_1, \ldots, \B_i$ and let $d_i = \det(B_{(i)} B^T_{(i)})$. Since $d_i$ is the square of the volume of the sublattice generated by $\{\B_1, \ldots, \B_i\}$, we have that $d_i$ is an integer and $$d_i = \prod_{j=1}^i \| \B^*_i \|^2.$$ We define $$D = \prod_{i=1}^{n-1} d_i = \prod_{i=1}^{n-1} B_i^{n-i}.$$ An elementary calculation shows that after swapping $\B_k$ and $\B_{k-1}$ and updating all the values $d_i$ and $D$ accordingly, the new value $D'$ satisfies the inequality $0 < D' \leq \delta D$. But at any stage of the algorithm, the quantity $D$ must be a positive integer, so this can only happen finitely often.

\subsection{Issues with the LLL algorithm applied to parametric vectors}
If we wish to apply the LLL algorithm to input vectors $\fvec_1(t), \ldots, \fvec_n(t) \in \Z[t]^m$, then we must begin with Gram-Schmidt reduction over the field $\Q(t) = \textup{Frac} \Z[t]$. We have already seen that this can be done. Next, on line 6 we must make sense of the quantity $q_j = \nearest{\mu_{k,j}}$ for $\mu_{k,j} \in \Q(t)$.

In the lemma below, we note that the rounding off of rational functions can be performed in a way which is EQP. 
\begin{definition} \label{def:mildEQP}
A function $g: \N \rightarrow \Z$ is \emph{mild EQP} if there are polynomials $f_0, \ldots, f_{N-1} \in \Q[t]$ with the same degree and leading coefficient such that eventually $g(t) = f_i(t)$ if $t \equiv i \textup{ mod } N$.
\end{definition}

\begin{lem} \label{lem:closestinteger}
  Let $f, h \in \Z[t]$. Then as $t$ ranges over $\Z$ or $\N$, the floor function $\lfloor \frac{f(t)}{h(t)} \rfloor$ and the closest integer function $\lfloor \frac{p(t)}{q(t)} \rceil$ are mild EQP.  
\end{lem}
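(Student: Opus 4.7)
The plan is to reduce to the case of the floor function and then use polynomial division plus a simple periodicity observation. For the closest-integer function, note that $\lfloor x \rceil = \lfloor x + 1/2 \rfloor$, so
\[
\left\lfloor \frac{p(t)}{q(t)} \right\rceil = \left\lfloor \frac{2 p(t) + q(t)}{2 q(t)} \right\rfloor,
\]
which is again the floor of a ratio of integer polynomials. So I only need to handle $\lfloor f(t)/h(t) \rfloor$.

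First I would perform polynomial division over $\Q$, writing $f(t) = Q(t) h(t) + r(t)$ with $Q, r \in \Q[t]$ and $\deg r < \deg h$. Let $N$ be a common denominator of the coefficients of $Q$, so that $Q(t) = P(t)/N$ with $P \in \Z[t]$. Then $f(t)/h(t) = Q(t) + r(t)/h(t)$, and since $\deg r < \deg h$ the correction $r(t)/h(t)$ tends to $0$; in particular there exists $T_0$ such that $|r(t)/h(t)| < 1/N$ for all $t \geq T_0$.

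The key observation is that $P(t+N) \equiv P(t) \pmod{N}$ (since $t+N \equiv t \pmod N$ inside any polynomial with integer coefficients). Therefore the fractional part
\[
\{Q(t)\} \; = \; \frac{P(t) \bmod N}{N}
\]
depends only on the residue class $i := t \bmod N$. Hence on each such class, $\lfloor Q(t) \rfloor = Q(t) - c_i$ for a constant $c_i \in \{0, 1/N, \ldots, (N-1)/N\}$, a collection of $N$ polynomial pieces all sharing the degree and leading coefficient of $Q$; this alone gives that $\lfloor Q(t) \rfloor$ is mild EQP.

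Finally I would incorporate the correction $r(t)/h(t)$. For $t \geq T_0$ the sign of $r(t)/h(t)$ is eventually constant (determined by the leading coefficients of $r$ and $h$, or identically zero if $r \equiv 0$), and $|r(t)/h(t)| < 1/N$. A short case analysis shows that $\lfloor Q(t) + r(t)/h(t) \rfloor$ differs from $\lfloor Q(t) \rfloor$ by at most $1$, with the discrepancy determined entirely by the sign of $r/h$ together with whether $c_i = 0$ (the only borderline case, since $\{Q(t)\}$ otherwise lies in $[1/N, (N-1)/N]$ which is safely away from $0$ and $1$ relative to the perturbation $1/N$). Since whether $c_i = 0$ depends only on $i = t \bmod N$, the resulting function is again a sum of $Q(t)$ and a purely periodic correction, i.e.\ mild EQP.

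The main obstacle is the boundary case where $\{Q(t)\} = 0$ for some residue classes: there the floor is sensitive to the direction of the correction term $r(t)/h(t)$, and a small negative perturbation drops the floor by one while a small positive perturbation does not. Everything else is bookkeeping once one chooses the common denominator $N$ large enough to dominate the perturbation. The closest-integer version inherits all of this via the reduction above, after noting that the analogous borderline $\{Q(t)\} = 1/2$ is again periodic in $t$.
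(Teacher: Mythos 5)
Your proof is correct, but it takes a genuinely different route from the paper on the main point. The paper disposes of the floor function by citing \cite[Theorem 4.1]{CLS} for the EQP property and remarking that mildness can be read off from the proof of that result; the only computation it actually carries out is the reduction $\lfloor x \rceil = \lfloor x + \tfrac12 \rfloor$, which you use in identical form for the closest-integer case. You instead prove the floor statement from scratch: divide to get $f = Qh + r$ with $\deg r < \deg h$, clear denominators so that $Q = P/N$ with $P \in \Z[t]$, observe that $P(t) \bmod N$ (hence the fractional part of $Q(t)$) depends only on $t \bmod N$, and then absorb the remainder term $r/h$, which eventually has absolute value below $1/N$ and eventually constant sign, so it can move the floor only on the residue classes where $Q(t)$ is an integer, and there only by the fixed shift dictated by that sign. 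Since every resulting piece is $Q(t)$ minus a constant, the ``same degree and leading coefficient'' clause of Definition \ref{def:mildEQP} is visible on the face of the construction (and in the degenerate case $\deg Q = 0$ all the constants $c_i$ coincide, so the pieces are literally equal and mildness still holds). What your approach buys is a self-contained, elementary argument in which mildness is transparent rather than needing to be extracted from the internals of the cited proof; what the paper's approach buys is brevity and a pointer to a more general result. Your treatment of the borderline classes ($c_i = 0$) via the eventually constant sign of $r/h$ is exactly the step a careless version of this argument would get wrong, and you handle it correctly.
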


\begin{proof}
  We have $\lfloor \frac{f(t)}{h(t)} \rfloor \in \EQP$ by \cite[Theorem 4.1]{CLS}. Although it is not stated explicitly, mildness follows from the proof of this result. 
  Now $\lfloor x \rceil = \lfloor x + \frac{1}{2} \rfloor$ for all $x \in \R$, so $\lfloor \frac{f(t)}{h(t)} \rceil = \lfloor \frac{2f(t) + h(t)}{2h(t)} \rfloor$ is also mild EQP.  
\end{proof}

Now since $q_j$ is EQP, so are the coordinates of the new vector obtained on line 6 and so is its norm. We now run into trouble with the comparison of the norms of two vectors on line 9. If the two norms were simply polynomials or rational functions in $t$, then one side would be eventually larger; i.e., larger when evaluated at any sufficiently large $t$. (See Proposition \ref{prop:polycomparison} below.) However this is not true for quasi-polynomials, so it may not be clear whether or not we should swap the two vectors on line 12.

The natural solution, which we will indeed adopt in our revised algorithm, is to \emph{branch} the computation every time an EQP function is introduced. That is, we assume from then on that $t$ is sufficiently large and that $t \equiv i \, \, (\textup{mod} \, \, M)$ for some fixed $i$ and suitable $M$ so that, after a linear substitution, all of our vectors are over $\Z[t]$ once again.

However, if we continue in this fashion with Algorithm \ref{alg:LLL}, we run into a more serious problem: it is not clear how to prove that it always terminates in finitely many steps. Imitating the termination proof from \cite{Galbraith} (as summarized in the previous subsection), we could define parametric versions $d_i(t)$ and $D(t)$ of the quantities $d_i$ and $D$, which are integer-valued functions; and any time we perform the swap on Line 12, the updated value $D'(t)$ of $D(t)$ will satisfy the same inequality $$0 < D'(t) \leq \delta D(t)$$ for all sufficiently large values of $t \in \N$ (by an identical argument as before). This almost works, except that every time branching occurs, we are effectively performing a substitution of $Mt' + i$ for $t$ where $M,i$ are constants and rewriting all functions in terms of $t'$, which increases the rate of growth of the function $D(t)$.

While a careful analysis might be able to remedy this argument, our approach will be somewhat different, as explained in the following section.

\section{Parametric LLL-reduced bases} \label{sec:LLL-reduced}

In this section we will develop the tools needed to prove Theorem~\ref{thm:LLLreduced}, our parametric version of the LLL algorithm.

Here is a brief summary of the proof. First we note that if $\fvec_1, \ldots, \fvec_n$ are parametric vectors in $\Z[t]^m$ (we use $\fvec_i$ rather than $\bvec_i$ to remind ourselves that they are now functions of $t$) which all have the same degree, and such that their corresponding vectors of leading coefficients are linearly independent, then we can apply the classical LLL algorithm to these leading-coefficient vectors and easily ``lift'' to an asymptotically LLL-reduced basis of the parametric lattice (Proposition~\ref{prop:samedegree}). If $\fvec_1, \ldots, \fvec_n$ have the same degree but their leading coefficient vectors are \emph{not} linearly independent, then certain integer linear combinations will have lower degree, and this case is subsumed in the case of lattices generated by vectors of varying degrees. To deal with the mixed-degree case, we show that the lattice has a basis $\mathcal{B} = \mathcal{B}_0 \cup \ldots \cup \mathcal{B}_d$ where $\mathcal{B}_i$ is a set of degree-$i$ vectors with linearly independent leading coefficients and such that when $i \neq j$, $\mathcal{B}_i$ generates a lattice which is ``asymptotically'' orthogonal to the lattice generated by $\mathcal{B}_j$. Then we can apply classical LLL separately to each $\mathcal{B}_i$ as before. This will ensure the Lov\'asz condition holds, but a further reduction of vectors in each $\mathcal{B}_i$ by basis vectors of lower degree may be necessary to ensure the eventual size-reduced condition holds. For the precise statement of our algorithm, see Section \ref{subsec:algorithm}.

\subsection{Basic definitions}

Here we adapt the various definitions involved in LLL-reduction to the case of parametric lattices.

\begin{definition} $ $
  \begin{enumerate}
  \item A \emph{parametric vector} is a vector $\fvec = (f^{(1)}(t), \dots, f^{(m)}(t))$ whose the coordinate functions vary with $t \in \N$ and such that $\fvec(t) \in \Q^m$ for all $t$. We will say that $\fvec$ is \emph{over $R$} if every $f^{(i)}(t)$ is in the ring of functions $R$. Unless otherwise specificed, parametric vectors will be over $\Z[t]$. 

    \item A set of parametric vectors $\{\fvec_1, \dots, \fvec_n \}$ with values in $\Z^m$ is a \emph{parametric lattice basis} if for every sufficiently large $t \in \N$, the vectors $\{\fvec_1(t), \dots, \fvec_n(t)\}$ are linearly independent and thus span an $n$-dimensional lattice in $\Z^m$. For each $t \in \N $,  $ \Span_\Z \{ \fvec_1(t), \dots, \fvec_n(t) \}$ is a sublattice $\Lambda_t$ of $\Z^m$, and we call $\{\Lambda_t \, : \, t \in \N\}$ the \emph{parametric lattice} generated by $\{\fvec_1, \dots, \fvec_n\}$. We also write $\Span \{\fvec_1, \ldots, \fvec_n\}$ for $\Lambda_t$.

 \item Given a parametric lattice $\Lambda_t$, a parametric vector $\fvec(t)$ is \emph{in $\Lambda_t$} if for all sufficiently large values of $t$, $\fvec(t) \in \Lambda_t$.
\end{enumerate}
  \end{definition}

Note that the dimension of a parametric lattice $\Lambda_t$ may vary with $t$, but it will eventually stabilize. This is why in the definition of a parametric vector $\fvec(t)$ being ``in'' a parametric lattice, we only require that $\fvec(t) \in \Lambda_t$ for sufficiently large values of $t$: this allows us to smoothly handle the case in which there are finitely many values of $t$ for which the dimension of $\Lambda_t$ is non-maximal.


\medskip
We now generalize the concept of an LLL-reduced basis to the parametric context. 

\begin{definition}
\label{def:evenLLL}
Fix $1/4 < \delta < 1$ (usually $\delta = 3/4$), and suppose that $\Bset = \{\fvec_1(t), \dots, \fvec_n(t)\}$ is a parametric lattice basis in which each $\fvec_i \in \Z[t]^m$. Let $\{\fvec^*_{1}(t), \dots, \fvec^*_{n}(t)\}$ be the Gram-Schmidt reduced basis (over $\Q(t)$) and let $$\rho_{i,j}(t) = \frac{\langle \fvec_i(t), \fvec_j^*(t) \rangle}{\langle \fvec_j^*(t), \fvec_j^*(t) \rangle}.$$

Then $\Bset$ is \emph{eventually LLL-reduced} (with factor $\delta$) if each of the following two conditions hold for all sufficiently large values of $t$:
  \begin{itemize}
  \item (Eventually size reduced) For $1 \leq j < i \leq n$,  $ |\rho_{i,j} (t)| \leq 1/2$. 
  \item (Eventual Lov\'asz condition) For $2 \leq i \leq n$,  $ \frac{\| \fvec^*_i(t) \|^2}{\|\fvec^*_{i-1}(t) \|^2} + \rho_{i,i-1}(t)^2 \geq \delta.$
  \end{itemize}
\end{definition}

A slightly weaker condition on a parametric basis is being ``asymptotically LLL-reduced,'' which means that the corresponding conditions hold in the limit instead of eventually. More concretely:

\begin{definition}
\label{def:asymLLL1}
With $\Bset = \{\fvec_1(t), \dots, \fvec_n(t)\}$ as above, we say that $\mathcal{B}$ is \emph{asymptotically LLL-reduced} (with factor $\delta$, $1/4 < \delta < 1$) if the following two conditions hold:
  \begin{itemize}
  \item (Asymptotically size reduced) For $1 \leq j < i \leq n$, $\lim_{t \rightarrow \infty} |\rho_{i,j} (t)| \leq 1/2$. 
  \item (Asymptotic Lov\'asz condition) For $2 \leq i \leq n$,  $\lim_{t \rightarrow \infty} \left( \frac{\| \fvec^*_i(t) \|^2}{\|\fvec^*_{i-1}(t) \|^2} + \rho_{i,i-1}(t)^2 \right) \geq \delta.$
  \end{itemize}
 Note that we allow the limit on the left-hand side of the asymptotic Lov\'asz condition to be $\infty$.

\end{definition}

For a general EQP lattice, we say that it is eventually (or asymptotically) LLL-reduced just in case it is LLL-reduced along each ``branch.'' More precisely:

\begin{definition}
\label{def:asymLLL}
(Eventually or asymptotically LLL-reduced, general case)  Fix $1/4 < \delta < 1$, and suppose that $\Bset = \{\fvec_1(t), \dots, \fvec_n(t)\}$ is a parametric lattice basis in which each $\fvec_i $ is EQP and let $M \in \N$ be such that every $\fvec_i$ is eventually $M$-periodic. For each $i \in \{1, \ldots, n\}$ and $r \in \{0, \ldots, M-1\}$, let $\fvec_{i,r}(t) = \fvec_i(Mt + r)$, so that $\fvec_{i,r}(t) \in \Z[t]^m$ (at least for sufficiently large values of $t$).

Then $\Bset$ is \emph{eventually LLL-reduced} (with factor $\delta$) if for every $r \in \{0, \ldots, M-1\}$, the basis $\{\fvec_{1,r}(t), \ldots, \fvec_{n,r}(t)\}$ is eventually LLL-reduced according to Definition~\ref{def:evenLLL}, and similarly for \emph{asymptotically LLL-reduced}.
\end{definition}

In our arguments below, we will never have to refer to the more complicated definition above since we will work along each branch separately so that Definition~\ref{def:evenLLL} or \ref{def:asymLLL1} applies.

\subsection{Parametric Gram-Schmidt reduction}
We begin our development by recording two lemmas on the parametric version of Gram-Schmidt reduction which will be useful later.

\begin{lem} \label{lem:GSoutput} Let $\bvec_1, \dots, \bvec_n \in \Q[t]^m$ and let $q \in \Q$. Assume the vectors $\bvec_1(q), \dots, \bvec_n(q) \in \Q^m$ are linearly independent. Then the following coincide:
  \begin{enumerate} \item The output of the Gram-Schmidt algorithm applied to the vectors $\bvec_1(q), \dots, \bvec_n(q) \in \Q$, and
  \item The output of the Gram-Schmidt algorithm applied to the vectors $\bvec_1(t), \dots, \bvec_n(t)$ in $\Q(t)$ evaluated at $t=q$.
 \end{enumerate}
\end{lem}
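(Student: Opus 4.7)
The plan is to induct on $i$ using the recursion
\[
\bvec_i^* \;=\; \bvec_i \;-\; \sum_{j=1}^{i-1} \mu_{i,j}\,\bvec_j^*, \qquad \mu_{i,j} \;=\; \frac{\langle \bvec_i,\bvec_j^*\rangle}{\langle \bvec_j^*,\bvec_j^*\rangle},
\]
which defines the output of Algorithm~\ref{alg:GS}. The only real content is verifying that none of the denominators $\langle \bvec_j^*,\bvec_j^*\rangle$ vanishes either as an element of $\Q(t)$ (so that version (2) makes sense in the first place) or upon evaluation at $t=q$ (so that the parametric output actually specializes).

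First I would establish the preliminary fact that the hypothesis on $q$ lifts to genericity: namely, that $\bvec_1(q),\dots,\bvec_n(q)$ being linearly independent in $\Q^m$ forces $\bvec_1(t),\dots,\bvec_n(t)$ to be linearly independent over $\Q(t)$. Any $\Q(t)$-dependence can be cleared of denominators and its coefficients divided through by their $\gcd$ in $\Q[t]$, producing a relation whose coefficient polynomials share no common root. Evaluating at $t=q$ would then contradict the hypothesis. Consequently, a standard induction on $j$ shows that the parametric Gram-Schmidt output $\bvec_j^*(t)$ is a nonzero vector in $\Q(t)^m$, so the inner products $\langle \bvec_j^*(t),\bvec_j^*(t)\rangle$ are nonzero elements of $\Q[t]$, and version (2) is well-defined.

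The main induction then runs cleanly. The base case $i=1$ is immediate since $\bvec_1^*=\bvec_1$ in both versions. For the inductive step, assume that for each $j<i$ the parametric output $\bvec_j^*(t)$, evaluated at $q$, equals the classical $j$-th Gram-Schmidt vector computed from $\bvec_1(q),\dots,\bvec_n(q)$. Because $\bvec_1(q),\dots,\bvec_i(q)$ are linearly independent, each of these classical vectors is nonzero, so $\langle \bvec_j^*(q),\bvec_j^*(q)\rangle\neq 0$. This is precisely what is needed to guarantee that the rational function $\mu_{i,j}(t)\in\Q(t)$ has no pole at $t=q$ and that its value there agrees with the classical $\mu_{i,j}$ computed on the evaluated vectors. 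Substituting into the recursion then matches $\bvec_i^*(q)$ with the classical $i$-th output.

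The only real obstacle is the bookkeeping around non-vanishing denominators: one has to rule out both a generic pole (handled by the lift-to-$\Q(t)$ argument) and a pole specifically at $t=q$ (handled by the linear-independence hypothesis together with the inductive hypothesis). Once these are secured, the conclusion is a mechanical unfolding of the recursion. I would remark that the hypothesis on $q$ cannot be dropped, since at an exceptional $q$ where the evaluated vectors become dependent, either the classical algorithm outputs a zero vector or the parametric output has a pole.
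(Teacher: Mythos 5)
Your proof is correct and follows essentially the same route as the paper: lift the linear independence at $t=q$ to linear independence over $\Q(t)$, then show the Gram--Schmidt recursion commutes with evaluation at $q$. Your write-up is in fact a bit more explicit than the paper's (which compresses the induction into a ``composition of commutative diagrams'' argument) about why the denominators $\langle \bvec_j^*, \bvec_j^*\rangle$ neither vanish in $\Q(t)$ nor at $t=q$, which is exactly the point that needs care.
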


\begin{proof}
  First, observe that $\bvec_1, \dots, \bvec_n$ are necessarily linearly independent in $\Q[t]$. If not, any linear dependence among them would also hold for the evaluated vectors $\bvec_1(q), \dots, \bvec_n(q)$, contradicting the hypothesis. So neither application of the Gram-Schmidt algorithm will require division by zero. 

  For each arithmetic operation we have a commutative diagram such as
  \[  \begin{tikzcd}
      \Q(t) \times \Q(t) \arrow[r, "+"] \arrow[d, "\ev_q \times \ev_q"] & \Q(t) \arrow[d, "ev_q"] \\ \Q \times \Q \arrow[r, "+"] & \Q    
    \end{tikzcd} \]
for addition. By composing an appropriate sequence of such diagrams, we see that the entire Gram-Schmidt algorithm commutes with evaluation.   
\end{proof}

\begin{prop} \label{prop:rho-bounds}
  Let $\fvec_1, \ldots, \fvec_n$ be a list of parametric vectors. Let $\fvec_1^\ast, \ldots, \fvec_n^\ast \in \Q(t)$ be the associated Gram-Schmidt basis and $\rho_{k,j}$ be the Gram-Schmidt coefficients for $1 \leq j < k \leq n$. Fix $k \in \{1, \ldots, n\}$ and apply one round of size reduction as in Algorithm \ref{alg:LLL}, letting $\fvec'_k = \fvec_k - \nearest{\rho_{k,j}} \fvec_j$ and let $\rho'_{k,i}$ be the updated Gram-Schmidt coefficient for each $i < k$. Then
  \begin{enumerate}
  \item $\fvec_1^\ast, \ldots, \fvec_n^\ast$ continue to be the Gram-Schmidt vectors associated to the new basis $\fvec_1, \ldots, \fvec'_k, \ldots, \fvec_n$,
  \item for \textbf{all} values of $t \in \N$, $| \rho'_{k,j}(t) | \leq \textup{min} \left( 1/2, | \rho_{k,j}(t) | \right)$,
  \item $\rho'_{k,i} =  \rho_{k,i}$ whenever $j < i < k$, and
  \item $\rho'_{\ell, i} = \rho_{\ell, i}$ whenever $\ell \neq k$.
  \end{enumerate}
\end{prop}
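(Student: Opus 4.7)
The plan is to verify the four claims by direct computation, relying on two standard facts about Gram--Schmidt orthogonalization: that $\fvec_j = \fvec_j^* + \sum_{i<j}\rho_{j,i}\fvec_i^*$ (so in particular $\langle \fvec_j, \fvec_j^*\rangle = \langle \fvec_j^*, \fvec_j^*\rangle$ and $\langle \fvec_j, \fvec_i^*\rangle = 0$ for $i > j$), and that the $\fvec_i^*$ are pairwise orthogonal. Everything reduces to bookkeeping with these identities, so there is no genuine obstacle; the only point needing care is that $\nearest{\rho_{k,j}}$ is defined branchwise (via Lemma~\ref{lem:closestinteger}) rather than as a single rational function of $t$, so all identities and inequalities must be checked pointwise for each $t \in \N$.

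For (1), I would note that $\fvec'_k - \fvec_k$ is an integer multiple of $\fvec_j$, which lies in $\Span(\fvec_1,\ldots,\fvec_{k-1})$. Hence $\Span(\fvec_1,\ldots,\fvec_i)$ is unchanged for every $i$. For $i \neq k$, $\fvec_i$ itself is unchanged, so its orthogonal projection onto the complement of $\Span(\fvec_1,\ldots,\fvec_{i-1})$, namely $\fvec_i^*$, is unchanged as well. For $i = k$, the perturbation to $\fvec_k$ lies in the span being projected out, hence dies under projection, so $\fvec_k^*$ is also unchanged.

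For (2), fix $t \in \N$ and compute pointwise
\[
\rho'_{k,j}(t) = \frac{\langle \fvec_k(t) - \nearest{\rho_{k,j}(t)}\fvec_j(t),\; \fvec_j^*(t)\rangle}{\langle \fvec_j^*(t),\fvec_j^*(t)\rangle} = \rho_{k,j}(t) - \nearest{\rho_{k,j}(t)},
\]
using $\langle \fvec_j,\fvec_j^*\rangle = \langle \fvec_j^*,\fvec_j^*\rangle$. Then $|x - \nearest{x}| \leq 1/2$ by definition of the closest integer, and $|x - \nearest{x}| \leq |x|$ because when $|x| \geq 1/2$ the bound $1/2 \leq |x|$ suffices, while when $|x| < 1/2$ one has $\nearest{x} = 0$ and so $x - \nearest{x} = x$. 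This yields the claimed bound at every $t$.

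For (3), the analogous computation gives
\[
\rho'_{k,i}(t) = \rho_{k,i}(t) - \nearest{\rho_{k,j}(t)} \cdot \frac{\langle \fvec_j(t),\fvec_i^*(t)\rangle}{\langle \fvec_i^*(t),\fvec_i^*(t)\rangle}
\]
for $j < i < k$, and the inner product $\langle \fvec_j,\fvec_i^*\rangle$ vanishes by the orthogonality identity cited at the outset (since $\fvec_j$ lies in $\Span(\fvec_1^*,\ldots,\fvec_j^*)$, which is orthogonal to $\fvec_i^*$ when $i > j$). Finally, (4) is immediate: for $\ell \neq k$, $\fvec_\ell$ is unchanged and, by (1), every $\fvec_i^*$ is unchanged, so $\rho'_{\ell,i} = \rho_{\ell,i}$ for all $i$.
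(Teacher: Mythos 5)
Your proof is correct, and it takes a somewhat different route from the paper. The paper disposes of the proposition by citing the non-parametric statements from Galbraith (Lemma 17.4.1 and Exercise 17.4.8) and transferring them to parametric vectors via Lemma~\ref{lem:GSoutput} (Gram--Schmidt commutes with evaluation at $t=q$), adding only a two-line argument for the strengthened inequality $|\rho'_{k,j}(t)|\leq|\rho_{k,j}(t)|$ in (2) and a remark for (4). You instead re-derive everything from scratch: (1) from the observation that the perturbation $\fvec'_k-\fvec_k$ is an integer multiple of $\fvec_j$ lying in the span being projected out; (2) from the identity $\langle\fvec_j,\fvec_j^*\rangle=\langle\fvec_j^*,\fvec_j^*\rangle$, giving $\rho'_{k,j}=\rho_{k,j}-\nearest{\rho_{k,j}}$ and then the elementary bound $|x-\nearest{x}|\leq\min(1/2,|x|)$, which neatly subsumes the paper's extra argument for the sharpened inequality, including the boundary case $|x|=1/2$; (3) from $\langle\fvec_j,\fvec_i^*\rangle=0$ for $i>j$; and (4) exactly as in the paper. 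Your self-contained computation buys independence from the external reference and makes the strengthened part of (2) transparent, at the cost of silently re-proving standard facts; the paper's citation-plus-transfer is shorter and makes explicit (via Lemma~\ref{lem:GSoutput}) the one genuinely parametric point, namely that identities established over $\Q(t)$, together with the branchwise (EQP) nature of $\nearest{\rho_{k,j}}$, can be read off pointwise at each sufficiently meaningful value of $t$ --- a point you acknowledge but handle informally by declaring that everything is checked pointwise.
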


\begin{proof}
Essentially the same statements are given for vectors in $\Z^m$ in \cite[Lemma 17.4.1 and Exercise 17.4.8]{Galbraith}, and the fact that they extend to parametric vectors follows from Lemma \ref{lem:GSoutput}. The only difference from the lemmas in \cite{Galbraith} is that in (2) we add the additional requirement that $|\rho'_{k,j}(t)| \leq |\rho_{k,j}(t)|$, but this follows from considering that if $| \rho_{k,j} | < 1/2$ then clearly $\fvec'_k = \fvec_k$ and hence $| \rho'_{k,j}(t)| = |\rho_{k,j}(t)|$. Condition (4) follows from the definitions plus part (1), since $\rho_{\ell, i}$ depends only upon the values of $\fvec^*_i$ and $\fvec_\ell$, neither of which is changed.

\end{proof}

\subsection{Pilot vectors}
One of the keys to generalizing the LLL algorithm to the case of parametric vectors is to identify an important special case in which the original algorithm directly applies. The idea is that if the parametric vectors $f_1, \dots, f_n$ all have the same degree, then up to some technical details, we can get away with applying the LLL algorithm not to $f_1, \dots, f_n$ themselves, but to their leading coefficient or \emph{pilot} vectors, as defined below, provided that these pilot vectors are linearly independent. 

\begin{definition} Let $\fvec = (f^{(1)}, \dots, f^{(m)}) \in \Q(t)^m$ be a parametric vector.
  \begin{enumerate}
  \item The \emph{degree} of $\fvec$ is $\deg(\fvec) := \max\{\deg(f^{(1)}, \dots, \deg(f^{(n)})) \}$.
  \item For each $0 \leq d \leq \deg(\fvec)$, the \emph{degree $d$ part} of $\fvec$ is $\fvec^d = (f_d^{(1)}, \dots f_d^{(m)}) \in \Z^m$, where $f_d^{(k)}$ is the coefficient of $t^d$ in $f^{(k)}$ for $1 \leq k \leq m$.
  \item The \emph{pilot vector} of $\fvec$ is $\widetilde{\fvec} = \fvec^{\deg{\fvec}}$.
  \item If $S$ is a set of parametric vectors over $\Z[t]^m$, then $\widetilde{S} \subseteq \Z^m$ is the set of pilot vectors of parametric vectors in $S$.
  \end{enumerate}
 We may similarly define the pilot vector in the case where $\fvec$ is mild EQP (Definition \ref{def:mildEQP}), as the degree and leading coefficients of $f^{(i)}$ are (eventually) well-defined.
\end{definition}

\begin{remark} \label{rem:guiding}
  As $t$ tends to infinity, the direction of $\fvec$ approaches the direction of $\widetilde{\fvec}$ and in fact $\lim_{t \to \infty} \frac{\|\fvec(t) - \widetilde{\fvec}(t) \|}{\|\fvec(t)\|} = 0$ so the pilot vector $\widetilde{\fvec}$ indeed ``guides'' the parametric vector $\fvec$. 
 
  Note also that entries of $\fvec$ which are not of maximal degree will correspond to zero entries of $\widetilde{\fvec}$, but $\widetilde{\fvec}$ will only be the zero vector if $\fvec$ is.
\end{remark}
  Pilot vectors behave well with respect to the Gram-Schmidt algorithm in the following sense.

\begin{lem} \label{lem:GSpilot}
  Let $\fvec_1, \dots, \fvec_n$ be parametric vectors over $\Q(t)$ such that that $\deg(\fvec_1) \leq \ldots \leq \deg(\fvec_n)$, and suppose that their respective pilot vectors $\widetilde{\fvec_1},\dots, \widetilde{\fvec_n} \in \Q^m$ are linearly independent over $\Q$. Let $\fvec_1^\ast, \dots, \fvec_n^\ast$ be the Gram-Schmidt vectors obtained from $\fvec_1,\dots,\fvec_n$ over $\Q(t)$ and $\rho_{i,j} \in \Q(t)$ be the Gram-Schmidt coefficients. Let $(\widetilde{\fvec_1})^\ast, \dots, (\widetilde{\fvec_n})^\ast$ be the Gram-Schmidt vectors over $\Q$ obtained from $\widetilde{\fvec_1}, \dots, \widetilde{\fvec_n}$ and $\{\mu_{i,j}\}$ be the Gram-Schmidt coefficients. Then 
  \begin{enumerate}
  \item $\deg(\fvec_i^\ast) = \deg(\fvec_i)$ for $1 \leq i \leq n$;
  \item Let $d_i = \deg(\fvec_i)$. Then $$\lim_{t \to \infty} \frac{\rho_{i,j}}{t^{d_i - d_j}} = \mu_{i,j}$$ for $1 \leq j < i \leq n$; 
  \item Either for all sufficiently large $t$, $$  \frac{\rho_{i,j}}{t^{d_i - d_j}} \geq \mu_{i,j},$$ or else for all sufficiently large $t$, $$ \frac{\rho_{i,j}}{t^{d_i - d_j}} \leq \mu_{i,j} ;$$ and 
  \item $\widetilde{(\fvec_i^\ast)} = (\widetilde{\fvec_i})^\ast$ for $1 \leq i \leq n$.
  \end{enumerate}
\end{lem}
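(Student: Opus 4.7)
The plan is to prove all four statements simultaneously by strong induction on $i$. The base case $i=1$ is immediate: $\fvec_1^* = \fvec_1$ and $(\widetilde{\fvec_1})^* = \widetilde{\fvec_1}$, so (1) and (4) hold and (2), (3) are vacuous.

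For the inductive step at $i > 1$, I would use the inductive hypothesis in the asymptotic form $\fvec_j^*(t) = (\widetilde{\fvec_j})^*\, t^{d_j} + o(t^{d_j})$ for every $j < i$, where $(\widetilde{\fvec_j})^* \neq 0$ because the pilot vectors $\widetilde{\fvec_1}, \dots, \widetilde{\fvec_n}$ are assumed linearly independent over $\Q$. From this I would read off the leading asymptotics of the numerator and denominator of $\rho_{i,j} = \langle \fvec_i, \fvec_j^*\rangle / \langle \fvec_j^*, \fvec_j^*\rangle$: the numerator has $t^{d_i + d_j}$-coefficient $\langle \widetilde{\fvec_i}, (\widetilde{\fvec_j})^*\rangle$, the denominator has $t^{2d_j}$-coefficient $\|(\widetilde{\fvec_j})^*\|^2 > 0$, so the quotient satisfies $\rho_{i,j}/t^{d_i - d_j} \to \mu_{i,j}$, giving (2). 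Claim (3) then follows because $\rho_{i,j}/t^{d_i - d_j} - \mu_{i,j}$ is a rational function of $t$ and hence is either identically zero or eventually of one sign.

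For (1) and (4), I would extract the $t^{d_i}$-leading vector of both sides of the Gram-Schmidt recurrence $\fvec_i^* = \fvec_i - \sum_{j<i} \rho_{i,j}\,\fvec_j^*$. Multiplying the asymptotic expansions, each summand $\rho_{i,j}\fvec_j^*$ contributes $\mu_{i,j}(\widetilde{\fvec_j})^*\, t^{d_i}$ plus lower-order terms, while $\fvec_i$ contributes $\widetilde{\fvec_i}\, t^{d_i}$, so the net $t^{d_i}$-leading vector on the right equals
\[ \widetilde{\fvec_i} - \sum_{j < i} \mu_{i,j}(\widetilde{\fvec_j})^* = (\widetilde{\fvec_i})^*. \]
Since this vector is nonzero by linear independence of the pilot vectors, we conclude both $\deg \fvec_i^* = d_i$ and $\widetilde{\fvec_i^*} = (\widetilde{\fvec_i})^*$, closing the induction on (1) and (4).

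The main obstacle is not conceptual but bookkeeping: one must interpret ``degree'' and ``pilot vector'' for the Gram-Schmidt vectors $\fvec_j^*$, whose entries are rational (not polynomial) in $t$, via leading-order asymptotics at infinity, and one must verify that the degenerate case $\mu_{i,j} = 0$ causes no harm (there $\rho_{i,j}$ simply has degree strictly less than $d_i - d_j$, and both sides of the limit in (2) vanish). The hypothesis of linear independence of pilot vectors is used exactly once, to guarantee that $(\widetilde{\fvec_i})^* \neq 0$ so that the inductive step for (1) and (4) survives.
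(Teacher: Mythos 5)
Your proposal is correct and follows essentially the same route as the paper: simultaneous strong induction on $i$, a leading-order (pilot-vector) analysis of the Gram--Schmidt recurrence $\fvec_i^* = \fvec_i - \sum_{j<i}\rho_{i,j}\fvec_j^*$, and the linear independence of the pilot vectors to guarantee the degree-$d_i$ part $(\widetilde{\fvec_i})^*$ is nonzero. The only difference is cosmetic ordering (you establish (2) before (1) and merge (1) with (4)), which is harmless since (2) for index $i$ only uses the inductive hypothesis for $j<i$.
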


\begin{proof}
 We prove all four statements by simultaneous strong induction on $i$. For $i=1$, (2) and (3) vacuously true and (1) and (4) are immediate because $\fvec_1^\ast = \fvec_1$ and $(\widetilde{\fvec_1})^\ast = \widetilde{\fvec_1}$. So suppose all three statements are true up to $i-1$ and (in the case of (2)) for all appropriate values of $j$. We recall that 
  \begin{equation}\tag{*} \fvec_i^\ast = \fvec_i - \sum_{j<i}\rho_{i,j}\fvec_j^\ast. \label{eq:GSpilot} \end{equation} 

  To prove (1), we know by induction that $\deg(\fvec_j^\ast) = \deg(\fvec_j) = d_j$ for each $j < i$, so 
  
  $$\rho_{i,j}(t) = \frac{ \langle \fvec_i(t), \fvec_j^\ast(t) \rangle}{\langle \fvec_j^\ast(t), \fvec_j^\ast(t) \rangle} = \frac{\langle \widetilde{\fvec_i} t^{d_i} + \cvec(t), \widetilde{\fvec_j^\ast} t^{d_j} + \dvec(t) \rangle}{\langle \widetilde{\fvec_j^\ast} t^{d_j} + \dvec(t) , \widetilde{\fvec_j^\ast} t^{d_j} + \dvec(t) \rangle}$$
  where $\cvec(t) \in \Q(t)^m$ is of degree less than $\dvec_i(t) \in \Q(t)^m$ is of degree less than $d_j$. It follows that $\rho_{i,j}$ is rational of degree at most $d_i - d_j$. Write $\rho_{i,j} = c_{i,j} t^{d_i - d_j} + \sigma_{i,j}$ where $c_{i,j} \in \Q$ and $\sigma_{i,j}$ is a rational function of degree strictly less that $d_i - d_j$. Then the degree-$d_i$ part of $\rho_{i,j}\fvec_j^\ast$ is just $c_{i,j}\widetilde{(\fvec_j^\ast)}$ (since everything else has strictly lower degree and $\deg(\fvec_j^\ast) = d_j$.) But by (4) applied to $j$, $\widetilde{(\fvec_j^\ast)} = (\widetilde{\fvec_j})^\ast$ which is a linear combination of $\{\widetilde{\fvec_1}, \dots, \widetilde{\fvec_j}\}$. It follows that the degree-$d_i$ part of the entire right-hand side of (\ref{eq:GSpilot}) is $\widetilde{\fvec_i}$ plus a linear combination of $\{\widetilde{\fvec_1}, \dots, \widetilde{\fvec_{i-1}}\}$, which cannot be zero by the hypothesis of linear independence. So $\deg(\fvec_i^\ast) = d_i$.

  For (2), we directly calculate
  \[ \begin{array}{lllllll} \lim_{t \to \infty} \frac{\rho_{i,j}(t)}{t^{d_i - d_j}}
    & = & \lim_{t \to \infty} \frac{\langle \fvec_i, \fvec_j^\ast \rangle }{\langle \fvec_j^\ast, \fvec_j^\ast \rangle} t^{d_j - d_i} 
    & = & \lim_{t \to \infty} \frac{\langle \fvec_i / t^{d_j}, \fvec_j^\ast / t^{d_j} \rangle }{ \langle \fvec_j^\ast / t^{d_j}, \fvec_j^\ast / t^{d_j} \rangle} t^{d_j - d_i} 
    & = & \lim_{t \to \infty} \frac{\langle \fvec_i / t^{d_i}, \fvec_j^\ast / t^{d_j} \rangle }{ \langle \fvec_j^\ast / t^{d_j}, \fvec_j^\ast / t^{d_j} \rangle} \\
    & = & \frac{\langle \widetilde{\fvec_i}, \widetilde{\fvec_j^\ast} \rangle}{\langle \widetilde{\fvec_j^\ast}, \widetilde{\fvec_j^\ast} \rangle} 
    & = & \frac{\langle \widetilde{\fvec_i}, (\widetilde{\fvec_j})^\ast \rangle}{\langle (\widetilde{\fvec_j})^\ast, (\widetilde{\fvec_j})^\ast \rangle} 
    & = &\mu_{i,j},  
  \end{array} \]
  where the fourth equality follows from (1) and the fifth equality from (4) applied to $j$.

Clause (3) is now an immediate consequence of (2), since $\frac{\rho_{i,j}}{t^{d_i - d_j}}$ is a rational function and so it is either eventually greater than or equal to, or else eventually less than or equal to, any given constant (and $\mu_{i,j}$ in particular).

  For (4), we take the pilot vectors on both sides of equation (\ref{eq:GSpilot}). By (2), each side has degree $d_i$. As in the proof of (1), the degree-$d_i$ part of $\rho_{i,j}\fvec_j^\ast$ is $c_{i,j} (\widetilde{\fvec_j})^\ast$. By (2), we also have $c_{i,j} = \mu_{i,j}$. Thus we obtain
  \[ \widetilde{\fvec_i^\ast} = \widetilde{\fvec_i} - \sum_{j < i} \mu_{i,j} (\widetilde{\fvec_j})^\ast.\]
  But the right-hand side of this equation is exactly $(\widetilde{\fvec_i})^\ast$ by definition.
\end{proof}

We can now show that for a collection $\{\fvec_1, \dots, \fvec_n\}$ of parametric vectors of the same degree with linearly independent pilot vectors, we can obtain an asymptotically LLL-reduced basis for their span by simply applying the LLL algorithm to their pilot vectors. More precisely:  

\begin{prop}
\label{prop:samedegree}
  Let $\fvec_1, \dots, \fvec_n \in \Z[t]$ be all of the same degree $d$, and suppose that their pilot vectors $\widetilde{\fvec_1},\dots,\widetilde{\fvec_n} \in \Z$ are linearly independent over $\Q$. Let $\bvec_1,\dots,\bvec_n$ be an LLL-reduced basis (with factor $\delta$) of the lattice $L_d = \Span_\Z\{\widetilde{\fvec_1},\dots,\widetilde{\fvec_n}\}$, and let $U_0 \in \Z^{n \times n}$ be a unimodular matrix such that
  \[ [\widetilde{\fvec_1},\dots,\widetilde{\fvec_n} ] = [\bvec_1 \dots \bvec_n] U_0\]
  as in Corollary \ref{cor:constructiveLLL}.
  Then the parametric vectors $\gvec_1,\dots,\gvec_n$ obtained by 
  \[ [\gvec_1 \dots \gvec_n] =  [\fvec_1 \dots \fvec_n] U_0\] 
  form an asympotically LLL-reduced basis (with the same factor $\delta$) for the lattice $\Lambda_d = \Span_\Z\{\fvec_1,\dots,\fvec_n\}$.
\end{prop}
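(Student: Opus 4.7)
The plan is to verify the two conditions in Definition~\ref{def:asymLLL1} by comparing the Gram-Schmidt data of $\gvec_1,\dots,\gvec_n$ over $\Q(t)$ to the classical Gram-Schmidt data of the pilot vectors $\bvec_1,\dots,\bvec_n$ over $\Q$, using Lemma~\ref{lem:GSpilot} as the bridge. Before doing so, I would observe that $\gvec_1,\dots,\gvec_n$ is a $\Z$-basis for $\Lambda_d$: because $U_0$ is unimodular, the vectors $\gvec_i(t)$ generate the same sublattice of $\Z^m$ as the $\fvec_i(t)$ for each $t$ at which the $\fvec_i(t)$ are linearly independent. A short calculation using $[\widetilde{\fvec_1}\cdots\widetilde{\fvec_n}] = [\bvec_1\cdots\bvec_n] U_0$ together with the $\Z$-linearity of the ``degree-$d$ part'' operator on the relevant vectors (valid because all $\fvec_j$ have degree exactly $d$) shows that the pilot vectors of the $\gvec_i$ are precisely the LLL-reduced basis vectors $\bvec_i$.

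Since the $\bvec_i$ form a basis of $L_d$, they are linearly independent, so Lemma~\ref{lem:GSpilot} applies to $\gvec_1,\dots,\gvec_n$ with all degrees equal to $d$. Writing $\gvec_i^\ast, \rho_{i,j}$ for the parametric Gram-Schmidt data and $\bvec_i^\ast, \mu_{i,j}$ for the classical Gram-Schmidt data of the pilots, parts (1), (2), and (4) of that lemma give $\deg(\gvec_i^\ast) = d$, $\lim_{t\to\infty}\rho_{i,j}(t) = \mu_{i,j}$ (since $d_i - d_j = 0$), and $\widetilde{\gvec_i^\ast} = \bvec_i^\ast$. The last of these implies that the top-degree term of $\|\gvec_i^\ast(t)\|^2$ is $\|\bvec_i^\ast\|^2 t^{2d}$, so $\lim_{t\to\infty} \|\gvec_i^\ast(t)\|^2 / \|\gvec_{i-1}^\ast(t)\|^2 = \|\bvec_i^\ast\|^2 / \|\bvec_{i-1}^\ast\|^2$.

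With these limits in hand, the two asymptotic conditions in Definition~\ref{def:asymLLL1} reduce to the corresponding classical conditions for the LLL-reduced basis $\bvec_1,\dots,\bvec_n$. Asymptotic size reduction follows from $\lim_{t\to\infty} |\rho_{i,j}(t)| = |\mu_{i,j}| \leq 1/2$, and the asymptotic Lov\'asz condition follows from
\[
\lim_{t\to\infty}\left(\frac{\|\gvec_i^\ast(t)\|^2}{\|\gvec_{i-1}^\ast(t)\|^2} + \rho_{i,i-1}(t)^2\right) = \frac{\|\bvec_i^\ast\|^2}{\|\bvec_{i-1}^\ast\|^2} + \mu_{i,i-1}^2 \geq \delta,
\]
where the final inequality is exactly the classical Lov\'asz condition satisfied by $\bvec_1,\dots,\bvec_n$. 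I do not expect a serious obstacle: the substantive work has been packaged into Lemma~\ref{lem:GSpilot}. The only technical point to watch is that the pilot operator behaves $\Z$-linearly on the vectors in question, which is guaranteed because the linear independence of $\widetilde{\fvec_1},\dots,\widetilde{\fvec_n}$ (and hence of $\bvec_1,\dots,\bvec_n$) prevents any unexpected degree drops in the $\gvec_i$.
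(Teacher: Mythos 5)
Your proposal is correct and follows essentially the same route as the paper's own proof: identify the pilot vectors of the $\gvec_i$ with the LLL-reduced basis $\bvec_i$ (using that all $\fvec_j$ have degree exactly $d$ and their pilots are linearly independent, so no degree drop occurs), then invoke Lemma~\ref{lem:GSpilot} to transfer the classical size-reduced and Lov\'asz conditions to their asymptotic counterparts. No substantive differences from the paper's argument.
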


\begin{proof}
  Since $\widetilde{\fvec_1},\dots,\widetilde{\fvec_n} \in \Z$ are linearly independent and $U_0$ is invertible, $\gvec_1,\dots,\gvec_n$ are all of degree $d$ and their respective pilot vectors are $\bvec_1,\dots,\bvec_n$.
  
We now verify the size-reduced condition. Let $\{\rho_{i,j}\}$ be the Gram-Schmidt coefficients for the vectors $\gvec_1, \dots, \gvec_n$ and $\{\mu_{i,j}\}$ be the Gram-Schmidt coefficients for $\widetilde{\gvec_1}, \dots, \widetilde{\gvec_n}$. Then for each $i$ and each $j < i$, by Lemma \ref{lem:GSpilot} we have $\lim_{t \to \infty} \rho_{i,j} = \mu_{i,j}$. But since $\widetilde{\gvec_i} = \bvec_i$ and $\bvec_1,\dots,\bvec_n$ form an LLL-reduced basis, $\mu_{i,j} \leq 1/2$. So $\lim_{t \to \infty} |\rho_{i,j} (t)| \leq 1/2$.

Finally we verify the Lov\'asz condition. Using Remark \ref{rem:guiding} and the various parts of Lemma \ref{lem:GSpilot}, we see that for each $2 \leq i \leq n$, 
\begin{align*}
  \lim_{t \rightarrow \infty} \frac{\| \gvec^*_i(t) \|^2}{\|\gvec^*_{i-1}(t) \|^2} + \rho_{i,i-1}(t)^2 & = \left( \lim_{t \rightarrow \infty} \frac{\| \gvec^*_i(t) \|^2}{\|\gvec^*_{i-1}(t) \|^2}  \right) + \mu^2_{i,i-1} \\
  & =  \frac{\| \widetilde{\gvec^*_i}(t) \|^2}{\|\widetilde{\gvec^*_{i-1}}(t) \|^2}  + \mu^2_{i,i-1} \\
  & =  \frac{\| (\widetilde{\gvec_i})^*(t) \|^2}{\| (\widetilde{\gvec_i})^*(t) \|^2}   + \mu^2_{i,i-1} \\
  & =   \frac{\| \bvec_i^\ast(t) \|^2}{\| \bvec_{i-1}^\ast(t) \|^2}  + \mu^2_{i,i-1},
\end{align*}
and this last expression is greater than or equal to $\delta$ because $\{ \bvec_1,\dots,\bvec_n \}$ is LLL-reduced.
\end{proof}

The previous Proposition yields an \textbf{asymptotically} LLL-reduced basis, but we would like an \textbf{eventually} LLL-reduced basis. It is not quite true that any unimodular matrix $U_0$ which gives an LLL-reduced basis for the pilot vectors will always give an eventually LLL-reduced basis when applied to the original vectors in the case where one of the quantities $|\mu_{i,j}|$ tends to exactly $1/2$.

\begin{example}Let $\fvec_1 = (2t, 2t)$ and $\fvec_2 = (0, t+1)$. Then $\fvec_1^* = \fvec_1$, $$\rho_{2,1} = \frac{2 t^2 + 2t}{4 t^2} = \frac{1}{2} + \frac{1}{2t},$$ and $\widetilde{\fvec_2^*} = (-1, 0)$. It is simple to check that $\{\widetilde{\fvec_1}, \widetilde{\fvec_2}\}$ is asymptotically LLL-reduced, so that in Proposition~\ref{prop:samedegree} we could take $U = I_2$, but $\rho_{2,1}$ is always slightly greater than $\frac{1}{2}$.
\end{example}

For this reason, to obtain eventually LLL-reduced bases in the case where the original basis has uniform degree, we will need to apply the following Lemma after applying Proposition~\ref{prop:samedegree}:

\begin{lem}
\label{lem:eventualsizereduction}
 Let $\gvec_1, \dots, \gvec_n \in \Z[t]$ be all of the same degree $d$, and suppose that they form an asymptotically LLL-reduced basis for the lattice they span. Furthermore, suppose that for some $j, k$ with $1 \leq j < k \leq n$, $$\lim_{t \rightarrow \infty} |\rho_{k,j} | = \frac{1}{2}$$ but for all sufficiently large values of $t$, $$|\rho_{k,j} | > \frac{1}{2}.$$ Then we can replace $\fvec_k$ by $\fvec'_k = \fvec_k \pm \fvec_j$ (where the choice of sign depends on the signs of the leading coefficients of $\fvec_k$ and $\fvec_j$) in such a way that:
 \begin{enumerate}

  \item $\rho'_{k,i} = \rho_{k,i}$ whenever $j < i < k$ (where ``$\rho_{k,j}'$'' is the quantity $\rho_{k,j}$ computed in terms of the new basis), and $\rho'_{\ell,i} = \rho_{\ell,i}$ whenever $\ell \neq k$; 
 \item for all sufficiently large $t$, $$|\rho'_{k,j} | < \frac{1}{2};$$ and
 \item  $\{\fvec_1, \ldots, \fvec'_k, \ldots, \fvec_n\}$ still satisfies the asymptotic Lov\'asz condition (with the same value of $\delta$).

 \end{enumerate}
\end{lem}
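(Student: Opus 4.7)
The plan is to treat the replacement $\gvec'_k = \gvec_k - \epsilon \gvec_j$ as a single invocation of the size-reduction step from Algorithm \ref{alg:LLL}, where the quantum $\epsilon \in \{+1,-1\}$ is forced upon us by the asymptotics of $\rho_{k,j}$. Since all $\gvec_i$ have the same degree $d$, Lemma \ref{lem:GSpilot}(2) gives $\lim_{t\to\infty} \rho_{k,j}(t) = \mu_{k,j}$, where $\mu_{k,j}$ is the corresponding Gram--Schmidt coefficient of the pilot vectors. The two hypotheses $\lim|\rho_{k,j}| = 1/2$ and $|\rho_{k,j}(t)| > 1/2$ eventually force $\mu_{k,j} = \pm 1/2$ and, since $\rho_{k,j}$ is a rational function, they force $\rho_{k,j}(t)$ to have constant sign (equal to $\mathrm{sign}(\mu_{k,j})$) with $|\rho_{k,j}(t)| \in (1/2, 1)$ for all sufficiently large $t$. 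The sign of $\mu_{k,j}$ is encoded in the leading coefficients via $\mu_{k,j} = \langle \widetilde{\gvec_k}, (\widetilde{\gvec_j})^\ast\rangle / \|(\widetilde{\gvec_j})^\ast\|^2$, so we set $\epsilon := \mathrm{sign}(\mu_{k,j})$ and define $\gvec'_k := \gvec_k - \epsilon \gvec_j$.

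Claim (1) then follows directly from Proposition \ref{prop:rho-bounds}: what we have done is exactly one size-reduction pass with $q_j = \epsilon = \nearest{\rho_{k,j}(t)}$ (valid for all large $t$), so parts (3) and (4) of that Proposition give $\rho'_{k,i} = \rho_{k,i}$ for $j < i < k$ and $\rho'_{\ell,i} = \rho_{\ell,i}$ for all $\ell \neq k$. For claim (2), invoke Proposition \ref{prop:rho-bounds}(1) to see that $\gvec_j^\ast$ is unchanged, and use the standard identity $\langle \gvec_j, \gvec_j^\ast\rangle = \|\gvec_j^\ast\|^2$ to compute
\[
\rho'_{k,j}(t) = \frac{\langle \gvec_k - \epsilon \gvec_j, \gvec_j^\ast\rangle}{\|\gvec_j^\ast\|^2} = \rho_{k,j}(t) - \epsilon.
\]
For $\epsilon = 1$, $\rho_{k,j}(t) \in (1/2,1)$ implies $\rho'_{k,j}(t) \in (-1/2, 0)$, so $|\rho'_{k,j}(t)| < 1/2$; the $\epsilon = -1$ case is symmetric. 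Crucially, this strict inequality uses the strict hypothesis $|\rho_{k,j}(t)| > 1/2$, not merely $\geq$.

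For claim (3), Proposition \ref{prop:rho-bounds}(1) tells us that every Gram--Schmidt vector $\gvec_i^\ast$ (and therefore every norm $\|\gvec_i^\ast\|$) is unchanged by the replacement. So the only way the asymptotic Lov\'asz condition
\[
\lim_{t \to \infty}\left(\frac{\|\gvec_i^\ast(t)\|^2}{\|\gvec_{i-1}^\ast(t)\|^2} + \rho_{i,i-1}(t)^2\right) \geq \delta
\]
could be affected is through the $\rho_{i,i-1}$ term, and by claim (1) this term is unchanged except possibly when $i = k$ and $i-1 = j$. In that single remaining case, $\rho'_{k,j}(t) = \rho_{k,j}(t) - \epsilon \to \mu_{k,j} - \epsilon = \mp 1/2$, so $(\rho'_{k,j})^2 \to 1/4 = \lim \rho_{k,j}^2$; the limit in the Lov\'asz condition is therefore unchanged, and the condition continues to hold with the same $\delta$. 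The only subtle step in the whole argument is this last matching of limits when $j = k-1$: it is precisely the fact that $\mu_{k,j} = \pm 1/2$ squares to $1/4$ regardless of sign that makes the asymptotic Lov\'asz condition impervious to the replacement.
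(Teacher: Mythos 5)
Your proposal is correct and follows essentially the same route as the paper: identify the replacement as one size-reduction step with $\nearest{\rho_{k,j}} = \pm 1$, invoke Proposition \ref{prop:rho-bounds} for parts (1) and (3) of the claim and for the invariance of the Gram--Schmidt vectors, compute $\rho'_{k,j} = \rho_{k,j} - \nearest{\rho_{k,j}}$ via $\langle \gvec_j, \gvec_j^\ast\rangle = \|\gvec_j^\ast\|^2$, and observe that $(\rho'_{k,j})^2$ has the same limit $1/4$ so the asymptotic Lov\'asz condition survives in the one nontrivial case $j = k-1$. The only cosmetic difference is your detour through $\mu_{k,j}$ and Lemma \ref{lem:GSpilot}(2) to pin down the sign (which tacitly assumes linearly independent pilot vectors); this is unnecessary, since the eventual sign of the rational function $\rho_{k,j}$ already determines $\nearest{\rho_{k,j}}$, exactly as in the paper.
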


\begin{proof}
By our hypotheses, $1 \geq |\rho_{k,j}| > 1/2$ for all sufficiently large $t$, so $\nearest{\rho_{k,j}} \in \{-1, 1\}$, and we choose $$\fvec'_k = \fvec_k - \nearest{\rho_{k,j}} \fvec_j.$$

Let the superscript ${}'$ denote quantities computed with respect to the new basis $\{\fvec_1, \ldots, \fvec'_k, \ldots, \fvec_n\}$ (for instance, ($\fvec^*_i)'$ or $\rho'_{i,\ell}$). By Proposition~\ref{prop:rho-bounds} (1), $(\fvec^*_i)' = \fvec^*_i$ for any $i$. Part (1) of the Lemma is just parts (3) and (4) of Proposition~\ref{prop:rho-bounds}. 

Proof of (2): Note that $$| \rho'_{k,j} | = \left| {\frac{\langle \fvec_k - \nearest{\rho_{k,j}} \fvec_j, \fvec^*_j \rangle}{\langle \fvec^*_j, \fvec^*_j \rangle}} \right| = \left| {\frac{\langle \fvec_k , \fvec^*_j \rangle}{\langle \fvec^*_j, \fvec^*_j \rangle}} - \nearest{\rho_{k,j}} \frac{ \langle \fvec_j, \fvec^*_j \rangle }{\langle \fvec^*_j, \fvec^*_j \rangle}  \right| = \left| \rho_{k,j} - \nearest{\rho_{k,j}} \right|$$ (where the last equality uses the identity $\langle \fvec_j, \fvec^*_j \rangle = \langle \fvec^*_j, \fvec^*_j \rangle$, see \cite{Galbraith} Exercise 17.4.8 (1)). From this and our hypotheses on $\rho_{k,j}$, it immediately follows that $|\rho'_{k,j}| < 1/2$ for all sufficiently large $t$, and also that $\lim_{t \rightarrow \infty} | \rho'_{k,j}| = 1/2$.

Proof of (3): To check the asymptotic Lov\'asz condition $$ \lim_{t \rightarrow \infty} \frac{\| \fvec^*_i(t) \|^2}{\|\fvec^*_{i-1}(t) \|^2} + \rho'_{i,i-1}(t)^2 \geq \delta$$ for the new basis, we consider three cases, depending on whether $i < k$, $i = k$, or $i > k$. If $i < k$, then $\rho'_{i,i-1} = \rho_{i,i-1}$ Proposition~\ref{prop:rho-bounds} (4), so trivially the Lov\'asz condition is preserved. If $i = k$, then we must consider two subcases: first, if $j < i-1$, then we can apply Proposition~\ref{prop:rho-bounds} (3) to conclude that $\rho'_{i, i-1} = \rho_{i,i-1}$, and so the Lov\'asz inequality is preserved; on the other hand, if $i = k$ and $j = i-1$, then as noted in the previous paragraph, $\lim_{t \rightarrow \infty} |\rho'_{i, i-1}| = 1/2 = \lim_{t \rightarrow \infty} |\rho_{i, i-1}|$, so again the Lov\'asz condition is preserved. Finally, if $i > k$, then $\rho'_{i, i-1} = \rho_{i, i-1}$ by part (1) of the Lemma, so once again the Lov\'asz inequality is preserved.
\end{proof}

\begin{prop}
\label{prop:samedegree2}
  Let $\fvec_1, \dots, \fvec_n \in \Z[t]$ be all of the same degree $d$, fix $\delta$ ($1/4 < \delta < 1$), and suppose that their pilot vectors $\widetilde{\fvec_1},\dots,\widetilde{\fvec_n} \in \Z$ are linearly independent over $\Q$. Then there is a unimodular matrix $U \in \Z^{n \times n}$ such that the parametric vectors $\gvec_1,\dots,\gvec_n$ obtained by 
  \[ [\gvec_1 \dots \gvec_n] =  [\fvec_1 \dots \fvec_n] U\] 
  form a basis for the lattice $\Lambda_d = \Span_\Z\{\fvec_1,\dots,\fvec_n\}$ which satisfies the \textbf{eventual} size-reduced condition and the \textbf{asymptotic} Lov\'asz condition with factor $\delta$.
\end{prop}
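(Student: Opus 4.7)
The plan is first to apply Proposition~\ref{prop:samedegree} to obtain a unimodular $U_0$ for which $(\gvec_1^{(0)}, \dots, \gvec_n^{(0)}) := (\fvec_1, \dots, \fvec_n) U_0$ is asymptotically LLL-reduced with pilot vectors $\bvec_1, \dots, \bvec_n$ forming an LLL-reduced basis in $\Q^n$ (so $|\mu^{(0)}_{k,j}| \le 1/2$ for all $j < k$), and then to follow up with a second pass of integer-coefficient size reductions to upgrade size-reducedness from asymptotic to eventual. The eventual size-reduced condition can fail at pairs $(k,j)$ where $|\mu^{(0)}_{k,j}| = 1/2$ and $|\rho_{k,j}(t)| > 1/2$ for all large $t$, and once any such correction is made, further violations can propagate to pairs $(k,j')$ with $j' < j$; all of them must be treated together.

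In the second pass I sweep through the pairs $(k,j)$ in the classical LLL order: $k$ ascending from $2$ to $n$ and, for each such $k$, $j$ descending from $k-1$ to $1$. An easy induction shows that at every step the current $\gvec_i$ all remain of degree $d$ with linearly independent pilot vectors, so by Lemma~\ref{lem:GSpilot}(2) the current $\rho_{k,j}(t)$ is a rational function with a finite limit, and consequently $\lfloor \rho_{k,j}(t) \rceil$ is eventually equal to a fixed integer $q_{k,j}$. The update is $\gvec_k \leftarrow \gvec_k - q_{k,j}\, \gvec_j$; the cumulative effect of all these updates is multiplication by a unimodular $U_1 \in \Z^{n \times n}$, and I set $U := U_0 U_1$.

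Eventual size-reducedness then follows from Proposition~\ref{prop:rho-bounds}(2): because $q_{k,j} = \lfloor \rho_{k,j}(t) \rceil$ for all sufficiently large $t$, the proposition yields $|\rho'_{k,j}(t)| \le 1/2$ for all sufficiently large $t$, while parts (3) and (4) of the same proposition guarantee that later updates do not undo an already-processed pair.

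The main obstacle is showing that the asymptotic Lov\'asz condition survives the second pass. By Proposition~\ref{prop:rho-bounds}(1) the Gram-Schmidt vectors $\gvec_i^*$ are unchanged under size reduction, so the ratios $\|\gvec_i^*\|^2 / \|\gvec_{i-1}^*\|^2$ are invariant. The only coefficient relevant for Lov\'asz is $\rho_{i,i-1}$, and it is altered only while processing the pair $(i, i-1)$. In our ordering this is the \emph{first} pair touching $\gvec_i$, so at that moment the pilot vector of $\gvec_i$ is still the original $\bvec_i$, giving $\lim_{t \to \infty}\rho_{i,i-1}(t) = \mu^{(0)}_{i,i-1}$ with $|\mu^{(0)}_{i,i-1}| \le 1/2$. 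Hence $q_{i,i-1} \in \{0, \pm 1\}$, and a short case check confirms $|\mu^{(0)}_{i,i-1} - q_{i,i-1}| = |\mu^{(0)}_{i,i-1}|$ in every case, so $\lim_{t \to \infty} \rho_{i,i-1}(t)^2$ is preserved and the asymptotic Lov\'asz condition is inherited from Proposition~\ref{prop:samedegree}.
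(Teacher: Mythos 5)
Your proposal is correct and takes essentially the same route as the paper: apply Proposition~\ref{prop:samedegree} to get an asymptotically LLL-reduced basis, then run a second pass of size reductions in the classical order ($k$ ascending, $j$ descending), using Proposition~\ref{prop:rho-bounds} to see that the Gram--Schmidt vectors (hence the norm ratios in the Lov\'asz condition) are unchanged, that already-processed pairs are not disturbed, and that $\rho_{i,i-1}$ is only touched at the pair $(i,i-1)$ where its limit is still $\mu^{(0)}_{i,i-1}$ with $|\mu^{(0)}_{i,i-1}|\leq 1/2$. The only (minor, and if anything slightly more robust) difference is that the paper packages the correction step as Lemma~\ref{lem:eventualsizereduction}, which subtracts $\pm 1$ in the borderline case $\lim|\rho_{k,j}|=1/2$, whereas you subtract the eventual constant value of $\lfloor \rho_{k,j}(t)\rceil$ at every pair, which also handles coefficients whose limits may have drifted above $1/2$ after earlier reductions of the same vector.
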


\begin{proof}
Begin with the unimodular matrix $U_0$ from Proposition~\ref{prop:samedegree} above to obtain a new basis $\{\hvec_1, \ldots, \hvec_n\}$ which is asymptotically LLL-reduced. If $\{\hvec_1, \ldots, \hvec_n\}$ is not eventually size-reduced, then by Lemma~\ref{lem:eventualsizereduction} we may perform further size reductions until it is eventually size-reduced: the reductions are applied to $\hvec_2, \ldots, \hvec_k, \ldots, \hvec_n$ in turn, and when reducing a vector $\hvec_k$, we consider reductions by $\hvec_{k-1}, \ldots, \hvec_{ j}, \ldots, \hvec_{ 1}$ in decreasing order by $j$. By clause (1) of Lemma~\ref{lem:eventualsizereduction}, this order of reductions will maintain all of the conditions on the basis that we care about. Since we are only adding or subtracting single basis vectors, the final output $\gvec_1,\dots,\gvec_n$ is the result of multiplying the input by some unimodular matrix $U$.
\end{proof}

In our eventual parametric LLL reduction algorithm, we will order the vectors by degree. This has the advantage that the eventual L\'ovasz condition is immediate whenever we compare vectors of different degree, as follows.

\begin{lem}
\label{lem:lovasz}
If $\{ \fvec_1, \ldots, \fvec_n\}$ is a parametric lattice basis over $\Z[t]$, $\deg(\fvec_i) = e$, and:
\begin{enumerate}
\item For every $j < i$, $\deg(\fvec_j) < e$; and
\item $\widetilde{\fvec_i} \notin \Span (\widetilde{\fvec_1}, \ldots, \widetilde{\fvec_{i-1}} )$,
\end{enumerate}
then the eventual Lov\'asz condition holds between $\fvec_i$ and $\fvec_{i-1}$ (for \emph{any} value of $\delta$).
\end{lem}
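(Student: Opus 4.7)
The plan is to exploit the identity
\[
  \frac{\|\fvec_i^*\|^2}{\|\fvec_{i-1}^*\|^2} + \rho_{i,i-1}^2 \;=\; \frac{\|\fvec_i^{**}\|^2}{\|\fvec_{i-1}^*\|^2},
\]
where $\fvec_i^{**} := \fvec_i - \sum_{j \leq i-2}\rho_{i,j}\fvec_j^*$ is the component of $\fvec_i$ orthogonal to $V_{i-2} := \Span_{\Q(t)}(\fvec_1,\ldots,\fvec_{i-2})$. This identity follows from the orthogonal decomposition $\fvec_i^{**} = \fvec_i^* + \rho_{i,i-1}\fvec_{i-1}^*$ together with $\fvec_i^* \perp \fvec_{i-1}^*$. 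Hence it suffices to show that $\|\fvec_i^{**}(t)\|^2/\|\fvec_{i-1}^*(t)\|^2 \to \infty$ as $t \to \infty$, from which the eventual Lov\'asz condition holds trivially for any $\delta < 1$.

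For the denominator, a routine induction on $j$ shows $\deg(\fvec_j^*) \leq \deg(\fvec_j)$: the estimate $\deg(\rho_{j,k}) \leq \deg(\fvec_j) - \deg(\fvec_k^*)$, immediate from the Gram-Schmidt formula for $\rho_{j,k}$, forces each summand $\rho_{j,k}\fvec_k^*$ in the recursion for $\fvec_j^*$ to have degree at most $\deg(\fvec_j)$. Applied to $j = i-1$ and combined with hypothesis~(1), this gives $\|\fvec_{i-1}^*\|^2 \leq \|\fvec_{i-1}\|^2 = O(t^{2(e-1)})$.

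The crux is the matching lower bound $\|\fvec_i^{**}\|^2 = \Theta(t^{2e})$. The same degree estimate yields $\deg(\fvec_i^{**}) \leq e$, and one reads off that the coefficient of $t^e$ in $\fvec_i^{**}$ is $\widetilde{\fvec_i}$ minus a $\Q$-linear combination of the leading coefficient vectors $\widetilde{\fvec_j^*}$ for $j \leq i-2$. A parallel induction on $j$ establishes $\widetilde{\fvec_j^*} \in W_j := \Span_\Q(\widetilde{\fvec_1},\ldots,\widetilde{\fvec_j})$; consequently the coefficient of $t^e$ in $\fvec_i^{**}$ has the form $\widetilde{\fvec_i} - w$ for some $w \in W_{i-2} \subseteq W_{i-1}$, and hypothesis~(2) forces this to be nonzero. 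Hence $\deg(\fvec_i^{**}) = e$ and $\|\fvec_i^{**}\|^2 = \Theta(t^{2e})$.

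Putting these two estimates together, $\|\fvec_i^{**}\|^2/\|\fvec_{i-1}^*\|^2 = \Omega(t^2) \to \infty$, which yields the desired conclusion. The main obstacle I anticipate is the parallel induction $\widetilde{\fvec_j^*} \in W_j$: when $\deg(\fvec_j^*) = \deg(\fvec_j)$ it reads directly off the Gram-Schmidt recursion, but when the degree of $\fvec_j^*$ drops below $\deg(\fvec_j)$ (which happens precisely when the leading coefficient of $\fvec_j$ already belongs to $W_{j-1}$) one has to track the leading cancellations more carefully to verify that the new, lower-degree leading coefficient of $\fvec_j^*$ still lies in $W_j$.
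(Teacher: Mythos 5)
Your reduction of the problem to showing $\|\fvec_i^{**}(t)\|^2/\|\fvec_{i-1}^*(t)\|^2\to\infty$ is correct (it is exactly the geometric reformulation of the Lov\'asz condition mentioned in Section 2), and the degree bounds $\deg(\rho_{j,k})\le\deg(\fvec_j)-\deg(\fvec_k^*)$ and $\deg(\fvec_{i-1}^*)\le e-1$ are fine. But the obstacle you flag at the end is a genuine gap, not a loose end: the induction claim $\widetilde{\fvec_j^*}\in W_j=\Span_\Q(\widetilde{\fvec_1},\ldots,\widetilde{\fvec_j})$ is false in general. Already for $\fvec_1=(1,0)$, $\fvec_2=(t,1)$ one has $\fvec_2^*=(0,1)$, whose pilot vector $(0,1)$ is not in $W_2=\Span_\Q\{(1,0)\}$. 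Worse, the gap cannot be closed from hypotheses (1) and (2) alone, because under those hypotheses the conclusion itself can fail: take $\fvec_1=(1,0,0,0)$, $\fvec_2=(t,1,0,0)$, $\fvec_3=(0,0,t,0)$, $\fvec_4=(0,t^2,0,1)$, with $i=4$, $e=2$. Conditions (1) and (2) hold, since $\widetilde{\fvec_4}=(0,1,0,0)\notin\Span_\Q\{(1,0,0,0),(0,0,1,0)\}$, yet $\fvec_3^*=(0,0,t,0)$, $\fvec_4^*=(0,0,0,1)$ and $\rho_{4,3}=0$, so the Lov\'asz quantity equals $1/t^2\to 0$. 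In your notation the same computation gives $\fvec_4^{**}=(0,0,0,1)$: the $t^2$-part of $\fvec_4$ is cancelled by a multiple of $\widetilde{\fvec_2^*}=(0,1,0,0)$, which lies outside $W_2$ --- precisely the loophole in your induction.

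What is missing is the standing hypothesis under which the lemma is actually applied in the paper: the pilot vectors of the preceding basis vectors are linearly independent and the basis is sorted by degree. The paper's one-line proof imports this silently by citing Lemma~\ref{lem:GSpilot} (whose hypotheses include linear independence of the pilot vectors), and at the point of use (Lemma~\ref{lem:Lovaszbydegree}, Step 2 of the algorithm) the whole set $\widetilde{\mathcal{B}}$ is independent, so no harm is done there. Once you add that hypothesis, your sticking point evaporates: part (4) of Lemma~\ref{lem:GSpilot} gives $\widetilde{\fvec_j^*}=(\widetilde{\fvec_j})^*\in W_j$, and your argument closes. At that point, though, the detour through $\fvec_i^{**}$ is unnecessary: Lemma~\ref{lem:GSpilot} already yields $\deg(\fvec_i^*)=e$ and $\deg(\fvec_{i-1}^*)<e$, so one may simply drop the nonnegative term $\rho_{i,i-1}^2$ and observe that $\|\fvec_i^*\|^2/\|\fvec_{i-1}^*\|^2\to\infty$, which is the paper's proof.
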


\begin{proof}
  By (1) and Lemma \ref{lem:GSpilot}, $\deg(\fvec_{i-1}^\ast) < e$ and by (2) and Lemma \ref{lem:GSpilot}, $\deg(\fvec_i^\ast) = e$. Thus
  \[ \frac{\| \fvec^*_i(t) \|^2}{\|\fvec^*_{i-1}(t) \|^2} + \rho_{i,i-1}(t)^2 \geq \frac{\| \fvec^*_i(t) \|^2}{\|\fvec^*_{i-1}(t) \|^2} \]
  which tends to infinity. 
 \end{proof}

Noting that Lemma \ref{lem:GSpilot} and Proposition \ref{prop:samedegree} require linearly independent pilot vectors in their hypotheses, we now show that we can modify a parametric lattice basis in order to achieve linear independence. 

\begin{lem} \label{lem:Hermite}
  Let $\fvec_1, \dots, \fvec_n \in \Z[t]^m$ be all of the same degree $d$. Then there exist $\gvec_1, \dots, \gvec_n \in \Z[t]^m$ and some $1 \leq j \leq n$ such that:
  \begin{enumerate}
  \item $\Span_\Z\{\gvec_1,\dots,\gvec_n\} = \Span_\Z\{\fvec_1,\dots,\fvec_n\}$, 
  \item $\deg(\gvec_1) = \dots = \deg(\gvec_j) = d$,  
  \item $\deg(\gvec_{j+1}) = \dots = \deg(\gvec_n) < d$, and 
  \item the pilot vectors $\{\widetilde{\gvec_1}, \dots, \widetilde{\gvec_j}\}$ are linearly independent over $\Q$.   
  \end{enumerate}
\end{lem}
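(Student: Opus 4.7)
The plan is to reformulate everything in terms of the $m \times n$ integer matrix $M := [\widetilde{\fvec_1} \mid \cdots \mid \widetilde{\fvec_n}]$ of pilot vectors, and to find a unimodular $U \in \Z^{n \times n}$ such that $MU$ has the shape $[M' \mid 0]$, where $M' \in \Z^{m \times j}$ has $j := \mathrm{rank}_\Q(M)$ linearly independent columns. Once such $U$ is in hand, I would set $[\gvec_1 \cdots \gvec_n] := [\fvec_1 \cdots \fvec_n] U$. Since $U$ has constant (integer) entries, forming $\Z$-linear combinations commutes with extracting the degree-$d$ part, so the degree-$d$ component of $\gvec_i$ is precisely the $i$-th column of $MU$. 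For $i \leq j$ that column is nonzero, so $\deg(\gvec_i)=d$ and $\widetilde{\gvec_i}$ is that column; for $i > j$ the column vanishes, the degree-$d$ terms cancel, and hence $\deg(\gvec_i) < d$.

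The existence of the desired $U$ is the key step, and is a standard consequence of Hermite (or Smith) normal form. The sublattice $K := \{\xvec \in \Z^n : M\xvec = 0\}$ has rank $n - j$, and it is saturated in $\Z^n$ because the quotient $\Z^n/K$ embeds into $\Z^m$ and is therefore torsion-free. Consequently $K$ admits a $\Z$-basis $\uvec_{j+1}, \ldots, \uvec_n$ which extends to a $\Z$-basis $\uvec_1, \ldots, \uvec_n$ of $\Z^n$; I would take $U := [\uvec_1 \mid \cdots \mid \uvec_n]$. The last $n-j$ columns of $MU$ vanish by the choice of $K$, and the first $j$ columns must be $\Q$-linearly independent, since $\mathrm{rank}(MU) = \mathrm{rank}(M) = j$.

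Conclusions (1) and (4) then follow immediately (unimodularity of $U$ preserves the $\Z$-span, and the pilot vectors $\widetilde{\gvec_1}, \ldots, \widetilde{\gvec_j}$ are precisely the columns of $M'$), and (2) and (3) are the degree computations above. The only mild subtlety is the interpretation of the chain ``$\deg(\gvec_{j+1}) = \cdots = \deg(\gvec_n) < d$'' in clause (3): what we actually obtain is that each of these degrees is strictly less than $d$ (the individual degrees can differ), and this is all that is needed for the subsequent algorithmic steps, which will recurse on the lower-degree vectors. I do not anticipate a genuine obstacle; the Hermite-normal-form construction of $U$ is the one step that deserves explicit mention.
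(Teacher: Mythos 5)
Your proposal is correct and follows essentially the same route as the paper: both proofs produce a single unimodular $U \in \Z^{n \times n}$ that turns the pilot matrix $M = [\widetilde{\fvec_1} \cdots \widetilde{\fvec_n}]$ into $[M' \mid 0]$ with $M'$ of full column rank $j$ (the paper invokes the column-style Hermite normal form, you construct $U$ by extending a basis of the saturated kernel lattice to a basis of $\Z^n$ -- an equivalent justification of the same key step), and then set $[\gvec_1 \cdots \gvec_n] = [\fvec_1 \cdots \fvec_n]U$ and read off (1)--(4) exactly as you do. Your caveat about clause (3) is apt but harmless: the paper's argument likewise only yields $\deg(\gvec_i) < d$ for $i > j$, which is all that the subsequent use in Proposition \ref{prop:Hermite} requires.
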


  \begin{proof}
    Let $\avec_i \in \Z^m$ be the pilot vector of $\fvec_i$ for $i=1,\dots,n$ and $A \in \Z^{m \times n}$ be the matrix whose columns are $\avec_1, \dots, \avec_n$. Let $B$ be the column-style Hermite normal form of $A$ and $\bvec_1, \dots, \bvec_n$ be the columns of $B$. Since Hermite normal form is obtained by an invertible transformation over $\Z$, there exists a unimodular matrix $U \in \Z^{n \times n}$ such that $AU = B$. By definition of Hermite normal form, for some $1 \leq j \leq n$ the first $j$ columns of $B$ are linearly independent over $\Q$ and the remaining columns consist entirely of zeros.
    
  We now apply the same transformation to the original vectors of polynomials. That is, let $F \in \Z^{m \times n}$ be the matrix whose columns are $\fvec_1, \dots, \fvec_n$ and $\gvec_1, \dots, \gvec_n$ be the columns of the matrix $G = FU$. Then (1) holds because $U$ is unimodular, (2) and (4) because $\bvec_1, \dots, \bvec_j$ are linearly independent over $\Q$, and (3) because $\bvec_{j+1} = \dots = \bvec_n = \mathbf{0}$. 
\end{proof}

\begin{prop} \label{prop:Hermite}
  Let $\fvec_1, \ldots,  \fvec_n \in \Z[t]^m$, ordered by degree ($\deg(\fvec_1) \leq \deg(\fvec_2) \leq \ldots$). Then there exist $\gvec_1, \ldots, \gvec_n \in \Z[t]^m$, also ordered by degree, such that:
  \begin{enumerate}
  \item $\Span_\Z\{\gvec_1,\dots,\gvec_n\} = \Span_\Z\{\fvec_1,\dots,\fvec_n\}$;
   \item for each $d \in \{0, 1, \ldots \} \cup \{-\infty\}$, if $\mathcal{B}_d$ is the set of all $\gvec_i$ of degree $d$, then $\widetilde{\mathcal{B}_d}$ is linearly independent over $\Q$; 
   \item for every $i \in \{1, \ldots, n\}$, $\deg(\gvec_i) \leq \deg(\fvec_i)$; and
 \item $\deg(\fvec_i) = \deg(\gvec_i)$ for all $i$ just in case for every possible degree $d$, the set  $\{ \widetilde{\fvec_j} \, : \, \deg(\fvec_j) = d \}$ is linearly independent.

\end{enumerate}

    
\end{prop}

Note that in Proposition \ref{prop:Hermite} we do \emph{not} need to assume that the original parametric vectors $\fvec_1,\ldots, \fvec_n $ are $\Z$-linearly independent, and to deal with this case we adopt the convention that $\deg( 0) = -\infty$. Condition (4) will be involved in showing termination of our algorithm: we will only invoke Proposition \ref{prop:Hermite} when the pilot vectors in some degree are linearly dependent, thus reducing the sum of the degrees.

\begin{proof}
 To prove Proposition~\ref{prop:Hermite}, we apply Hermite normal form in each degree as in Lemma \ref{lem:Hermite}, from highest to lowest. By unimodularity, we have (1). Since degrees are never increased, we have (3). While working in a given degree $d$ we do not affect the vectors of degrees higher than $d$ and we create linearly independent pilot vectors in degree $d$; thus (2) follows recursively.

  Finally we must show each implication in (4). If for some degree $d$ the vectors of degree $d$ have linearly independent pilot vectors, then the unimodular matrix $U$ that converts these pilot vectors to Hermite normal form has no columns of zeros, so no vectors of lower degree are created. The right-to-left direction follows recursively.

  On the other hand, suppose that for some degree $d$ the pilot vectors are linearly dependent. Choose the largest such $d$, so that the reduction in degrees higher than $d$ does not create new vectors of degree $d$. Then in degree $d$, the matrix $U$ does have a column of zeros, which creates a vector of lower degree. 
\end{proof}


\subsection{Asymptotic orthogonality}

For our eventual algorithm, we need to arrange for the set of \emph{all} pilot vectors to be linearly independent, not just the pilot vectors arising from basis elements of the same degree (as Proposition~\ref{prop:Hermite} yields). To achieve this, we will show a stronger statement in Corollary \ref{cor:orthogonal_to_all}: given sets of vectors of two different degrees $d$ and $e$ we can perform a reduction that either lowers the degrees or produces a \emph{asymptotically orthogonal} sets, defined as follows. 

\begin{definition} \label{orth} 
\begin{enumerate}
\item Two parametric vectors $\fvec$ and $\gvec$ over $\Z[t]$ are \emph{asymptotically orthogonal} (written $\fvec \perp^{asym} \gvec$) if their pilot vectors $\widetilde{\fvec}$ and $\widetilde{\gvec}$ are orthogonal.
\item If $S$ and $T$ are sets of parametric vectors over $\Z[t]$, then we say that $S$ is \emph{asymptotically orthogonal} to $T$ (written $S \perp^{asym} T$) if every vector in $S$ is asymptotically orthogonal to every vector in $T$.
\end{enumerate}
\end{definition}

The following two lemmas will be important in our eventual parametric LLL reduction algorithm, and they are the main reason why we introduce the concept of asymptotic orthogonality.

\begin{lem}
\label{lem:asymptotic_orthogonality}
Suppose that $\mathcal{B} = \{\fvec_1, \ldots, \fvec_n\}$ is a parametric lattice basis which is sorted by degree (that is, if $i < j$ then $\deg(\fvec_i) \leq \deg(\fvec_j)$) and such that the set of pilot vectors $\widetilde{\mathcal{B}}$ is linearly independent. Let $\mathcal{B}_d$ be the subset of $\mathcal{B}$ consisting of all the degree-$d$ vectors, and assume that whenever $d \neq e$ we have $\mathcal{B}_d \perp^{asym} \mathcal{B}_e$.

Then if $i < j$ and $\deg(\fvec_i) < \deg(\fvec_j)$, we have $$\deg(\rho_{j,i}) < \deg(\fvec_j) - \deg(\fvec_i),$$ where the $\rho_{j,i}$ are the Gram-Schmidt coefficients computed from $\mathcal{B}$ over $\Q(t)$.
\end{lem}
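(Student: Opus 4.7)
The plan is to reduce this to a statement about the pilot vectors, where asymptotic orthogonality is by definition ordinary orthogonality. The key tool is Lemma \ref{lem:GSpilot}, particularly part (2), which exactly controls the top-degree behavior of $\rho_{j,i}$.

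First I would set $d_i = \deg(\fvec_i)$ and $d_j = \deg(\fvec_j)$, and recall from the proof of Lemma \ref{lem:GSpilot}(1) that $\rho_{j,i}$ is a rational function of degree at most $d_j - d_i$. Thus, to prove strict inequality $\deg(\rho_{j,i}) < d_j - d_i$, it suffices to prove that the ``leading coefficient'' of $\rho_{j,i}/t^{d_j - d_i}$ is zero. Since the pilot vectors of $\mathcal{B}$ are linearly independent by assumption and $\mathcal{B}$ is sorted by degree, Lemma \ref{lem:GSpilot}(2) applies to give
\[
\lim_{t \to \infty} \frac{\rho_{j,i}(t)}{t^{d_j - d_i}} = \mu_{j,i},
\]
where $\mu_{j,i}$ is the Gram--Schmidt coefficient over $\Q$ associated to the pilot vectors $\widetilde{\fvec_1}, \ldots, \widetilde{\fvec_n}$. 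So the lemma reduces to showing $\mu_{j,i} = 0$.

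Next I would expand the numerator of $\mu_{j,i} = \langle \widetilde{\fvec_j}, (\widetilde{\fvec_i})^{\ast}\rangle / \langle (\widetilde{\fvec_i})^{\ast}, (\widetilde{\fvec_i})^{\ast}\rangle$. By construction, $(\widetilde{\fvec_i})^{\ast}$ lies in the $\Q$-span of $\widetilde{\fvec_1}, \ldots, \widetilde{\fvec_i}$. Because the basis is sorted by degree and $d_i < d_j$, each of these pilot vectors has degree at most $d_i < d_j$, so each belongs to some $\widetilde{\mathcal{B}_e}$ with $e \neq d_j$. Hence $(\widetilde{\fvec_i})^{\ast}$ lies in the $\Q$-span of $\bigcup_{e < d_j}\widetilde{\mathcal{B}_e}$.

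Finally I would apply the asymptotic orthogonality hypothesis: $\widetilde{\fvec_j} \in \widetilde{\mathcal{B}_{d_j}}$ is orthogonal to every vector in every $\widetilde{\mathcal{B}_e}$ with $e \neq d_j$, so it is orthogonal to any $\Q$-linear combination of such vectors, giving $\langle \widetilde{\fvec_j}, (\widetilde{\fvec_i})^{\ast}\rangle = 0$ and therefore $\mu_{j,i} = 0$. There is no real obstacle here once Lemma \ref{lem:GSpilot} is in hand; the only subtlety is to remember that ``asymptotically orthogonal'' refers to pilot vectors, and that the pilot vector of $(\widetilde{\fvec_i})^{\ast}$ (viewed trivially as a constant parametric vector) is itself, which lets the hypothesis be invoked cleanly via the spanning argument above.
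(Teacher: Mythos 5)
Your proof is correct and essentially matches the paper's: both arguments come down to showing that the top-degree contribution to $\rho_{j,i}$ vanishes because $(\widetilde{\fvec_i})^{\ast}$ lies in $\Span_\Q\{\widetilde{\fvec_1},\ldots,\widetilde{\fvec_i}\}$, all of whose members are orthogonal to $\widetilde{\fvec_j}$ by the sortedness and asymptotic-orthogonality hypotheses. The only cosmetic difference is that you route the argument through Lemma \ref{lem:GSpilot}(2) and the vanishing of $\mu_{j,i}$, while the paper extracts the degree-$(d_i+d_j)$ part of the numerator $\langle \fvec_j, \fvec_i^\ast\rangle$ directly via Lemma \ref{lem:GSpilot}(4) and compares with the denominator of degree $2d_i$.
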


\begin{proof}
Recall that $$\rho_{j,i} = \frac{\langle \fvec_j, \fvec_i^*\rangle}{\langle \fvec_i^*, \fvec_i^* \rangle}.$$ Let $d_i = \deg(\fvec_i)$ and $d_j = \deg(\fvec_j)$, and recall that by Lemma~\ref{lem:GSpilot}, $\deg(\fvec_i^*) = d_i$. We consider the numerator of $\rho_{j,i}$ first. Observe that $$\left( \langle \fvec_j, \fvec_i^* \rangle \right)^{d_i + d_j} = \langle \widetilde{\fvec_j}, \widetilde{\fvec_i^*} \rangle = \langle \widetilde{\fvec_j}, (\widetilde{\fvec_i})^* \rangle = \langle \widetilde{\fvec_j}, \sum_{k=1}^i c_k \widetilde{\fvec_k} \rangle$$ for some coefficients $c_k \in \Q$, where the second equality is by part (3) of Lemma~\ref{lem:GSpilot}. Since $\mathcal{B}$ is ordered by degree, by the assumption of asymptotic orthogonality between vectors of different degrees, we have that $\langle \widetilde{\fvec_j}, \widetilde{\fvec_k} \rangle = 0$ for any $k \in \{1, \ldots, i\}$. 

Thus $\left( \langle \fvec_j, \fvec_i^* \rangle \right)^{d_i + d_j} = 0$ and so the numerator of $\rho_{j,i}$ has degree strictly less than $d_j + d_i$. But the denominator has degree $2 d_i$ (since $\deg(\fvec_i^*) = \deg(\fvec_i) = d_i$), so $\deg(\rho_{j,i}) < d_j + d_i - 2 d_i = d_j - d_i$.
\end{proof}

The next lemma may be confusing without a clarification and an illustrative example. If $\mathcal{B} = \{\fvec_1, \ldots, \fvec_n\}$ is a parametric lattice basis and we take some sub-basis $\mathcal{B}' = \{\fvec_i, \fvec_{i+1}, \ldots, \fvec_j\}$ of consecutive vectors from $\mathcal{B}$, then the Gram-Schmidt vectors $\{\fvec^*_i, \ldots, \fvec^*_j\}$ as computed within the basis $\mathcal{B}'$ will generally be different from the corresponding Gram-Schmidt vectors $\{\fvec^*_1, \ldots, \fvec^*_n\}$ computed within $\mathcal{B}$. Hence there is a possibility that even if $\mathcal{B}'$ is asymptotically LLL-reduced, considered as a basis in its own right, two of the adjacent vectors from $\mathcal{B}'$ may no longer satisfy the asymptotic Lov\'asz condition. The example below shows that this is a real possibility:

\begin{example}
Consider the basis $\mathcal{B} = \{\fvec_1, \fvec_2, \fvec_3\}$ where $\fvec_1 = (t, 0, 0)^T$, $\fvec_2 = (0, 2t, 0)^T$, and $\fvec_3 = (t, t, t)^T$, and let $\mathcal{B}' = \{\fvec_2, \fvec_3\}$. First we check that $\mathcal{B}'$ is asymptotically Lov\'asz with $\delta = 3/4$. For this, we need to compute the Gram-Schmidt basis $\{\fvec_2^*, \fvec_3^*\}$ of $\mathcal{B}'$. Clearly $\fvec_2^* = \fvec_2$, while $$\fvec_3^* = \left(\begin{array}{c} t \\ t \\ t \end{array} \right) - \rho_{3,2} \left(\begin{array}{c} 0 \\ 2t \\ 0 \end{array} \right) = \left(\begin{array}{c} t \\ 0 \\ t \end{array}\right),$$ using the fact that $\rho_{3,2} = \frac{2 t^2}{4 t^2} = \frac{1}{2}$. Now to check the Lov\'asz condition, note that $$\frac{\| \fvec^*_3 \|^2}{\| \fvec_2^* \|^2} + \rho_{3,2}^2 = \frac{2 t^2}{4 t^2} + \left( \frac{1}{2} \right)^2 = \frac{3}{4},$$ as we wanted.

On the other hand, suppose we check the asymptotic Lov\'asz condition on the entire basis $\mathcal{B}$. In the corresponding Gram-Schmidt basis $\{\fvec_1^*, \fvec_2^*, \fvec_3^*\}$, clearly $\fvec_1^* = \fvec_1$ and $\fvec_2^* = \fvec_2$ (by orthogonality), while $$\fvec_3^* = \left(\begin{array}{c} t \\ t \\ t \end{array} \right) - \frac{2 t^2}{4 t^2} \left( \begin{array}{c} 0 \\ 2t \\ 0 \end{array} \right) - \left(\begin{array}{c} t \\ 0 \\ 0 \end{array} \right),$$ so $\fvec_3^* = (0, 0, t)^T$, with $\rho_{3,2} = \frac{1}{2}$ as before. But now the Lov\'asz condition for $\mathcal{B}$ between $\fvec_3$ and $\fvec_2$ would say that $$\lim_{t \rightarrow \infty}  \frac{\| \fvec^*_3 \|^2}{\| \fvec_2^* \|^2} + \rho_{3,2}^2  \geq \frac{3}{4},$$ which is false since $\| \fvec_3^* \|^2 = t^2$, $\|\fvec_2^* \|^2 = 4 t^2$, and $\rho_{3,2} = \frac{1}{2}$, so that the left-hand side is equal to the constant value of $\frac{1}{2}$.
\end{example}

The point of the next lemma is that as long as the parts of the basis with different degrees are asymptotically orthogonal to one another, then the asymptotic Lov\'asz condition will hold as long as it holds within the vectors of each degree.

\begin{lem}
\label{lem:Lovaszbydegree}
Suppose that $\mathcal{B}$ is a parametric lattice basis which is sorted by degree, the set of pilot vectors $\widetilde{\mathcal{B}}$ is linearly independent, and whenever $d \neq e$ we have $\mathcal{B}_d \perp^{asym} \mathcal{B}_e$ (where $\mathcal{B}_d, \mathcal{B}_e$ are the subsets consisting of degree-$d$ and degree-$e$ vectors, as above).
Then if the \textbf{asymptotic} Lov\'asz condition holds with factor $\delta'$ for each $\mathcal{B}_d$ separately, and if $1/4 < \delta < \delta' < 1$, then the \textbf{eventual} Lov\'asz condition holds for all of $\mathcal{B}$ with factor $\delta$.
\end{lem}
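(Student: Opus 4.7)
The plan is to split the verification of the Lov\'asz condition between adjacent vectors $\fvec_{i-1}, \fvec_i \in \mathcal{B}$ into two cases, depending on whether $\deg(\fvec_{i-1}) < \deg(\fvec_i)$ or $\deg(\fvec_{i-1}) = \deg(\fvec_i)$ (these are the only options since $\mathcal{B}$ is sorted by degree). The first case is immediate from Lemma~\ref{lem:lovasz}, so all the work goes into the equal-degree case. Fix $d = \deg(\fvec_{i-1}) = \deg(\fvec_i)$ and write $\hat{\fvec}_i, \hat{\rho}_{i,j}$ for the Gram-Schmidt vectors and coefficients computed \emph{within $\mathcal{B}_d$ alone}, reserving $\fvec_i^*, \rho_{i,j}$ for the quantities computed in the whole basis $\mathcal{B}$. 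The goal is to show that the asymptotic Lov\'asz condition for $\mathcal{B}_d$ transfers to the corresponding asymptotic condition inside $\mathcal{B}$, and then to use $\delta < \delta'$ to upgrade ``asymptotic'' to ``eventual''.

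The key step is the following claim, which I would prove by induction on $i$: for every $\fvec_i \in \mathcal{B}_d$, the Gram-Schmidt vector $(\widetilde{\fvec_i})^{*_\mathcal{B}}$ of $\widetilde{\fvec_i}$ computed over all of $\widetilde{\mathcal{B}}$ coincides with the Gram-Schmidt vector $(\widetilde{\fvec_i})^{*_d}$ computed within $\widetilde{\mathcal{B}_d}$ alone. Indeed, writing $(\widetilde{\fvec_i})^{*_\mathcal{B}} = \widetilde{\fvec_i} - \sum_{j<i} \mu_{i,j} (\widetilde{\fvec_j})^{*_\mathcal{B}}$, the inductive hypothesis implies that each $(\widetilde{\fvec_j})^{*_\mathcal{B}}$ lies in the $\Q$-span of $\widetilde{\mathcal{B}_{\deg(\fvec_j)}}$. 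If $\deg(\fvec_j) \neq d$, then asymptotic orthogonality $\mathcal{B}_d \perp^{asym} \mathcal{B}_{\deg(\fvec_j)}$ makes $\widetilde{\fvec_i}$ orthogonal to $(\widetilde{\fvec_j})^{*_\mathcal{B}}$, so $\mu_{i,j} = 0$. The remaining terms (with $\deg(\fvec_j) = d$) are, by induction, exactly the ones appearing in the internal Gram-Schmidt formula for $(\widetilde{\fvec_i})^{*_d}$, proving the claim.

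Combining the claim with Lemma~\ref{lem:GSpilot}(4) (applied once to $\mathcal{B}$ and once to $\mathcal{B}_d$) gives $\widetilde{\fvec_i^*} = \widetilde{\hat{\fvec}_i}$ for all $\fvec_i \in \mathcal{B}_d$. From Lemma~\ref{lem:GSpilot}(2), taking $d_i = d_{i-1} = d$, we then conclude
\[
\lim_{t\to\infty} \rho_{i,i-1}(t) \;=\; \mu_{i,i-1} \;=\; \hat{\mu}_{i,i-1} \;=\; \lim_{t\to\infty} \hat{\rho}_{i,i-1}(t),
\]
while the norm ratio satisfies
\[
\lim_{t\to\infty} \frac{\|\fvec_i^*(t)\|^2}{\|\fvec_{i-1}^*(t)\|^2} \;=\; \frac{\|\widetilde{\fvec_i^*}\|^2}{\|\widetilde{\fvec_{i-1}^*}\|^2} \;=\; \frac{\|\widetilde{\hat{\fvec}_i}\|^2}{\|\widetilde{\hat{\fvec}_{i-1}}\|^2} \;=\; \lim_{t\to\infty} \frac{\|\hat{\fvec}_i(t)\|^2}{\|\hat{\fvec}_{i-1}(t)\|^2}
\]
(with the convention that the limit may be $+\infty$). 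Adding these two identities shows that the quantity appearing in the asymptotic Lov\'asz condition is the same whether computed inside $\mathcal{B}$ or inside $\mathcal{B}_d$, hence has limit $\geq \delta'$ by hypothesis.

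Finally, since $\|\fvec_i^*\|^2 / \|\fvec_{i-1}^*\|^2 + \rho_{i,i-1}^2$ is a rational function of $t$ whose limit at infinity is at least $\delta' > \delta$, it must exceed $\delta$ for all sufficiently large $t$, establishing the eventual Lov\'asz condition with factor $\delta$ for every adjacent pair in $\mathcal{B}$. The main obstacle is really the bookkeeping in the inductive claim: one has to be careful that asymptotic orthogonality of the \emph{original} degree-strata transfers to orthogonality of the iterated Gram-Schmidt outputs, which is what the induction buys us. Everything else is routine manipulation of limits and Lemma~\ref{lem:GSpilot}.
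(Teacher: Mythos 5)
Your proof is correct and follows essentially the same route as the paper's: split into the unequal-degree case (handled by Lemma~\ref{lem:lovasz}) and the equal-degree case, whose key point is that the pilot vectors of the Gram--Schmidt vectors computed in all of $\mathcal{B}$ coincide with those computed within $\mathcal{B}_d$ alone, after which the limit comparison and $\delta < \delta'$ yield the eventual condition. The only (minor) difference is how that identity is proved: you decouple the Gram--Schmidt process at the level of the (exactly orthogonal) pilot vectors and apply Lemma~\ref{lem:GSpilot}(4) twice, whereas the paper's Claim is deduced via Lemma~\ref{lem:asymptotic_orthogonality}; both arguments are valid.
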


\begin{proof}
Write $\mathcal{B} = \{\fvec_1, \ldots, \fvec_n\}$ and let $d_i = \deg(\fvec_i)$. Fix $i$ with $1 < i \leq n$. Throughout, $\mathcal{B}^* = \{\fvec^*_1, \ldots, \fvec^*_n\}$ is the Gram-Schmidt basis corresponding to $\mathcal{B}$, so our goal is to verify that for all sufficiently large $t$,

\begin{equation}\label{Lov} \frac{\|\fvec_i^*(t) \|^2}{\| \fvec^*_{i-1}(t)\|^2} + \left(\frac{\langle \fvec_i(t), \fvec^*_{i-1}(t) \rangle}{\langle \fvec_{i-1}^*(t), \fvec_{i-1}^*(t) \rangle}\right)^2 \geq \delta.\end{equation}

First note that if $d_{i-1} < d_i$, then since the asymptotic orthogonality condition implies that $\widetilde{\fvec_i} \notin \Span(\widetilde{\fvec_1}, \ldots, \widetilde{\fvec_{i-1}} )$, Lemma~\ref{lem:lovasz} implies that the eventual Lov\'asz condition holds between $\fvec_i$ and $\fvec_{i-1}$. Therefore we may assume that $d_{i-1} = d_i$. If $\mathcal{B}_{d_i} = \{\fvec_k, \ldots, \fvec_\ell\}$, write $\mathcal{B}^+ = \{\fvec_k^+, \ldots, \fvec_\ell^+ \}$ for the Gram-Schmidt reduction of the basis $\mathcal{B}_{d_i}$, using the same indices for corresponding vectors in $\mathcal{B}$.

\begin{claim}
$(\widetilde{\fvec_k^+}, \ldots, \widetilde{\fvec_\ell^+} ) = (\widetilde{\fvec^*_k}, \ldots, \widetilde{\fvec_\ell^*}) $.
\end{claim}

\begin{proof}
It is straightforward to prove that $\widetilde{\fvec_j^+} = \widetilde{\fvec_j^*}$ by induction on $j \in \{k, \ldots, \ell\}$ using Lemma~\ref{lem:asymptotic_orthogonality}.
\end{proof}

Now since the degrees of $\fvec_i, \fvec_i^*,$ and $\fvec^*_{i-1}$ are all $d_i$ (by our assumptions and Lemma~\ref{lem:GSpilot}), the Claim above implies that to show that the inequality (\ref{Lov}) holds eventually, it is sufficient to check that

\begin{equation}\lim_{t \rightarrow \infty} \frac{\|\fvec_i^+(t) \|^2}{\| \fvec^+_{i-1}(t)\|^2} + \left(\frac{\langle \fvec_i(t), \fvec^+_{i-1}(t) \rangle}{\langle \fvec_{i-1}^+(t), \fvec_{i-1}^+(t) \rangle}\right)^2 \geq \delta'.\end{equation}

But this inequality is true by our hypothesis that $\mathcal{B}_{d_i}$ satisfies the asymptotic Lov\'asz condition with factor $\delta'$, so we are done.
\end{proof}

\begin{lem} \label{lem:orthogonal-span}
If $S$ and $T$ are sets of parametric vectors such that:
\begin{enumerate}
\item $S$ and $T$ are each homogeneous in degree: any two parametric vectors in $S$ have the same degree, and likewise for $T$;
\item Each set $\widetilde{S}$ and $\widetilde{T}$, considered separately, is linearly independent; and
\item $S \perp^{asym} T$,
\end{enumerate}
then $\Span_\Z(S) \perp^{asym} \Span_\Z(T)$.
\end{lem}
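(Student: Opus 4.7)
The plan is to unpack the definitions and exploit the fact that pilot vectors behave linearly when homogeneous in degree. Let $d$ denote the common degree of vectors in $S$ and $e$ the common degree of vectors in $T$. Pick arbitrary $\fvec \in \Span_\Z(S)$ and $\gvec \in \Span_\Z(T)$, so $\fvec = \sum_i a_i \fvec_i$ and $\gvec = \sum_j b_j \gvec_j$ with $a_i, b_j \in \Z$, $\fvec_i \in S$, $\gvec_j \in T$. The goal is to show $\langle \widetilde{\fvec}, \widetilde{\gvec} \rangle = 0$.

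The main step is to identify the pilot vectors $\widetilde{\fvec}$ and $\widetilde{\gvec}$ explicitly. Since every $\fvec_i$ has the same degree $d$, the sum $\fvec = \sum_i a_i \fvec_i$ is a polynomial vector of degree at most $d$, and its degree-$d$ part is exactly $\sum_i a_i \widetilde{\fvec_i}$. Now I would invoke hypothesis (2): because $\widetilde{S}$ is linearly independent over $\Q$, the vector $\sum_i a_i \widetilde{\fvec_i}$ vanishes only when every $a_i = 0$, i.e., when $\fvec = 0$. Thus, when $\fvec \neq 0$, we get the clean identity $\widetilde{\fvec} = \sum_i a_i \widetilde{\fvec_i}$, and symmetrically $\widetilde{\gvec} = \sum_j b_j \widetilde{\gvec_j}$ when $\gvec \neq 0$. (If either $\fvec$ or $\gvec$ is zero, asymptotic orthogonality holds trivially since the pilot vector of $0$ is $0$.)

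Finally, bilinearity of the inner product combined with hypothesis (3) yields
\[ \langle \widetilde{\fvec}, \widetilde{\gvec} \rangle \;=\; \Bigl\langle \sum_i a_i \widetilde{\fvec_i}, \sum_j b_j \widetilde{\gvec_j} \Bigr\rangle \;=\; \sum_{i,j} a_i b_j \langle \widetilde{\fvec_i}, \widetilde{\gvec_j} \rangle \;=\; 0, \]
since each pairwise inner product $\langle \widetilde{\fvec_i}, \widetilde{\gvec_j} \rangle$ vanishes by the assumption $S \perp^{asym} T$. This establishes $\fvec \perp^{asym} \gvec$, and since $\fvec, \gvec$ were arbitrary, $\Span_\Z(S) \perp^{asym} \Span_\Z(T)$.

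I do not anticipate a genuine obstacle: the proof is essentially a bookkeeping argument. The only subtle point is making sure that taking pilot vectors commutes with integer linear combinations, which is precisely where homogeneity of degree (hypothesis 1) and linear independence of pilot vectors (hypothesis 2) are used — without linear independence, cancellation among the leading terms could reduce the degree of the sum and change its pilot vector. Once that commutation is verified, the conclusion is immediate from bilinearity.
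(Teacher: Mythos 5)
Your proposal is correct and follows essentially the same route as the paper: the key step in both is that homogeneity of degree plus linear independence of the pilot vectors guarantees that taking pilot vectors commutes with integer linear combinations (no cancellation of leading terms), after which bilinearity of the inner product and the hypothesis $S \perp^{asym} T$ finish the argument. The paper merely states this more tersely; your handling of the zero-vector case is a harmless extra detail.
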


\begin{proof}
The fact that $\widetilde{S}$ is linearly independent and homogeneous in degree implies that for distinct $\fvec_1, \ldots, \fvec_n \in S$ and $a_1, \ldots, a_n \in \Z$, $$\widetilde{\sum_{i=1}^n a_i \fvec_i} = a_1 \widetilde{\fvec_1} + \ldots + a_n \widetilde{\fvec_n},$$ and likewise for $T$, so the Lemma follows from the corresponding fact about non-parametric vectors over $\Z$ (orthogonality is preserved by linear combinations).
\end{proof}

\begin{prop}
\label{prop:reduce_one_vector}
Suppose that $\hvec$ is a degree $e$ parametric vector, $\Bset$ is a set of parametric vectors over $\Z[t]$ which are all of the same degree $d < e$, and $\widetilde{\Bset}$ is linearly independent. Then there is an EQP parametric vector $\hvec'$ such that:

\begin{enumerate}
\item $\Span ( \Bset \cup \{\hvec'\}) = \Span ( \Bset \cup \{\hvec\})$;
\item If $\widetilde{\hvec} \in \Span_\Q( \widetilde{\Bset})$, then $\deg(\hvec') < \deg(\hvec)$; and
\item If $\widetilde{\hvec} \notin \Span_\Q( \widetilde{\Bset})$, then $\deg(\hvec') = \deg(\hvec)$, $\hvec'$ is mild EQP (Definition \ref{def:mildEQP}), and $\hvec'$ is asymptotically orthogonal to $\Bset$. 
\end{enumerate}
\end{prop}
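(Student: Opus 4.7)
The plan is to construct $\hvec'$ as an integer linear combination $\hvec - \sum_{i=1}^k n_i(t)\,\bvec_i(t)$ whose degree-$e$ coefficient cancels the parallel component of $\widetilde{\hvec}$ relative to $\Span_\R(\widetilde{\Bset})$, leaving (in case~(3)) exactly the orthogonal complement. Write $\Bset = \{\bvec_1,\dots,\bvec_k\}$. Since $\widetilde{\Bset}$ is $\Q$-linearly independent, the Gram matrix $G \in \Q^{k\times k}$ with $G_{ij} = \langle\widetilde{\bvec_i},\widetilde{\bvec_j}\rangle$ is invertible, so there is a unique $\cvec^* = (c_1^*,\dots,c_k^*) \in \Q^k$ satisfying $G\cvec^* = (\langle\widetilde{\hvec},\widetilde{\bvec_j}\rangle)_{j=1}^k$. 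Geometrically, $\sum_i c_i^*\widetilde{\bvec_i}$ is the orthogonal projection of $\widetilde{\hvec}$ onto $\Span_\R(\widetilde{\Bset})$; it equals $\widetilde{\hvec}$ exactly in case~(2), while in case~(3) the remainder $\widetilde{\hvec}-\sum_i c_i^*\widetilde{\bvec_i}$ is nonzero and orthogonal to every $\widetilde{\bvec_j}$.

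Next I promote each rational coefficient $c_i^*$ to an integer-valued mild EQP function with the right leading asymptotics. Choosing a common denominator $b>0$ with $c_i^* = a_i/b$ and $a_i \in \Z$, I set
$$n_i(t) := \nearest{\,a_i t^{e-d}/b\,}$$
(taking $n_i \equiv 0$ if $c_i^* = 0$). By Lemma~\ref{lem:closestinteger}, $n_i$ is mild EQP, and since $\nearest{a_i t^{e-d}/b}$ differs from $a_i t^{e-d}/b$ by at most $1/2$, each branch polynomial of $n_i$ has degree $e-d$ with leading coefficient $c_i^*$. Define $\hvec'(t) := \hvec(t) - \sum_{i=1}^k n_i(t)\,\bvec_i(t)$. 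Clause~(1) is immediate because each $n_i(t) \in \Z$.

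For (2) and (3), I examine the degree-$e$ coefficient of $\hvec'$ on each residue class modulo a common period of the $n_i$. On such a class, $n_i(t)\bvec_i(t)$ is a polynomial vector of degree at most $e$ with degree-$e$ coefficient exactly $c_i^*\widetilde{\bvec_i}$, so $\sum_i n_i(t)\bvec_i(t)$ has degree-$e$ coefficient $\sum_i c_i^*\widetilde{\bvec_i}$ on every branch. Hence the degree-$e$ coefficient of $\hvec'$ is $\widetilde{\hvec}-\sum_i c_i^*\widetilde{\bvec_i}$ on every branch. In case~(2) this vanishes and $\deg(\hvec')<e$; in case~(3) it is nonzero, identical across branches, and orthogonal to every $\widetilde{\bvec_j}$, so $\deg(\hvec')=e$, $\hvec'$ is mild EQP, and $\hvec' \perp^{asym} \Bset$.

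The step I expect to require the most care is the claim that $\nearest{a_i t^{e-d}/b}$ is \emph{mild} EQP with leading coefficient exactly $c_i^*$ on every residue class, as opposed to merely EQP. Lemma~\ref{lem:closestinteger} supplies mildness, and the identification of the common leading coefficient as $c_i^*$ will follow from the boundedness of the rounding error: on any fixed residue class, the branch polynomial agrees with the rational function $a_i t^{e-d}/b$ up to an additive error of at most $1/2$, which can only affect terms of order strictly below $t^{e-d}$.
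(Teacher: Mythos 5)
Your proposal is correct, but it takes a genuinely different (and somewhat more elementary) route than the paper. The paper extends the parametric Gram--Schmidt process over $\Q(t)$ to $\Bset \cup \{\hvec\}$, writes the projection of $\hvec$ onto $\Span_{\Q(t)}(\Bset)$ as $\sum_j \alpha_j \fvec_j$ with $\alpha_j \in \Q(t)$, sets $\hvec' = \hvec - \sum_j \nearest{\alpha_j}\fvec_j$, and then uses Lemma \ref{lem:GSpilot} together with a claim identifying $\deg(\hvec^\ast) < e$ with $\widetilde{\hvec} \in \Span_\Q(\widetilde{\Bset})$. You instead work purely at the level of pilot vectors: you solve the constant normal equations $G\cvec^\ast = (\langle \widetilde{\hvec}, \widetilde{\bvec_j}\rangle)_j$ over $\Q$ and round the monomials $c_i^\ast t^{e-d}$, which suffices because clauses (2) and (3) only concern the degree-$e$ coefficient of $\hvec'$. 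Your key technical point is sound: a branch polynomial of $\nearest{a_i t^{e-d}/b}$ differs from $a_i t^{e-d}/b$ by a polynomial bounded on an arithmetic progression, hence by a constant, so every branch has degree $e-d$ and leading coefficient $c_i^\ast$, and the degree-$e$ part of $\hvec'$ is the (branch-independent) projection residual; the case split on whether this residual vanishes is exactly the dichotomy $\widetilde{\hvec} \in \Span_\Q(\widetilde{\Bset})$ versus not. What each approach buys: the paper's rounding of the full rational-function coefficients $\alpha_j(t)$ also shrinks lower-order terms and reuses machinery (parametric Gram--Schmidt, Lemma \ref{lem:GSpilot}) needed elsewhere, while yours avoids rational-function coefficients entirely and produces correction coefficients $n_i(t)$ that are genuinely periodic, making the later branching trivial. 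One point worth checking since the paper's proof of Corollary \ref{cor:orthogonal_to_all} peeks inside this construction: with your $\hvec'$ the corollary still goes through, since the degree-$e$ part of your correction term is $\sum_i c_i^\ast \widetilde{\bvec_i} \in \Span_\Q(\widetilde{\Bset})$, which is orthogonal to $\widetilde{\Bset'}$ by hypothesis. Finally, your proof establishes mildness of $\hvec'$ in the same sense the paper does (the top-degree coefficient is branch-independent), so there is no loss of rigor relative to the original argument.
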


\begin{proof}
  Let $\Bset = \{\fvec_1,\dots,\fvec_k\}$. Applying Gram-Schmidt over $\Q(t)$, we can write $\fvec_i^\ast = \fvec_i - \sum_{j=1}^{i-1} \rho_{i,j} \fvec_j^\ast$. So by induction we can write $\fvec_i^\ast =  \sum_{j=1}^{i} \beta_{i,j} \fvec_j$ where $\beta_{i,j} \in \Q(t)$. 

  We now extend the Gram-Schmidt process to $\Bset \cup \hvec$, obtaining $\hvec = \hvec^\ast + \gvec$, where $\gvec \in \Span_{\Q(t)}(\Bset)$ and $\hvec^\ast \in \Span_{\Q(t)}(\Bset)^\perp$ are both of degree $\leq e$. Explicitly, let $\sigma_i = \langle \hvec, \fvec_i^\ast \rangle / \langle \fvec_i^\ast, \fvec_i^\ast \rangle$, so that
  \begin{align*} \gvec & = \sum_{i=1}^k \sigma_i \fvec_i^\ast \\
    & = \sum_{i=1}^k \sigma_i \sum_{j=1}^i \beta_{i,j} \fvec_j \\
    & = \sum_{j=1}^k \left( \sum_{i=j}^k \beta_{i,j}\sigma_i \right) \fvec_j \\
    & = \sum_{j=1}^k \alpha_j \fvec_j
 \end{align*} 
where $\alpha_j = \sum_{i=j}^k \beta_{i,j}\sigma_i \in \Q(t)$. 

To obtain a decomposition of $\hvec$ over EQP which will be asymptotically close to the Gram-Schmidt decomposition, we just round off the coefficients $\alpha_j$. That is, let
\[ \hvec_1 := \sum_{j=1}^k \nearest{\alpha_j} \fvec_j , \]
\[ \hvec' = \hvec-\hvec_1.\]
Both are EQP by Lemma \ref{lem:closestinteger}, and $\hvec_1$ is mild EQP. This definition of $\hvec'$ immediately yields (1).  

\begin{claim} We have $\deg(\hvec^\ast) < e$ if and only $\widetilde{\hvec} \in \Span_{\Q} ( \widetilde{\Bset})$. 
\end{claim}

\begin{proof}
  Since $\deg(\hvec) = e$, we have
 \[ \widetilde{\hvec} = \hvec^e = (\hvec^\ast)^e + \gvec^e.\]
  If $\deg(\hvec^\ast)<e$, then $\widetilde{\hvec} = \gvec^e = \widetilde{\gvec} \in \Span_{\Q}(\widetilde{\Bset})$. On the other hand, if $\deg(\hvec^\ast) = e$, then $\widetilde{\hvec}$ is the sum of a nonzero element of $\widetilde{\Bset}^\perp$ and an element of $\Span_\Q(\widetilde{\Bset})$, so it cannot lie in $\Span_\Q(\widetilde{\Bset})$.  
\end{proof}

Now $(\hvec')^e = (\hvec-\hvec_1)^e = (\hvec-\gvec)^e+(\gvec-\hvec_1)^e$. But
$\gvec - \hvec_1 = \sum_{j=1}^k \left(\alpha_j - \nearest{\alpha_j} \right) \fvec_j$, the product of an eventually periodic function with a parametric vector of degree $d$, so $(\gvec-\hvec_1)^e = 0$. That is, $(\hvec')^e = (\hvec-\gvec)^e = (\hvec^\ast)^e$. If $\widetilde{\hvec} \in \Span_{\Q} ( \widetilde{\Bset})$, then $\deg(\hvec^\ast) < e$ by the Claim. But then $\deg(\hvec') < e$ as well, proving (2). Otherwise, $\deg(\hvec^\ast) = e$ by the Claim, and so $\deg(\hvec') = e$ as well. In particular, $(\hvec')^e = \hvec^e - \hvec_1^e$. Now $\hvec^e$ is constant by definition and $\hvec_1^e$ is constant by mildness, so $(\hvec')^e$ is also constant and hence $\hvec'$ is mild. Furthermore, $\widetilde{\hvec'} = \widetilde{\hvec^\ast}$ which is orthogonal to $\widetilde{\Bset}$ by Lemma \ref{lem:GSpilot}. Thus $\hvec'$ is asymptotically orthogonal to $\Bset$, proving (3).
\end{proof}

\begin{cor} \label{cor:orthogonal_to_all}
Say $e > d$. Suppose $\Bset$ is a set of parametric vectors over $\Z[t]$ and all of degree $d$ whose pilot vectors are linearly independent, $\Bset'$ is a set of parametric vectors over $\Z[t]$ and $\Bset, \Bset'$ are asymptotically orthogonal. Let $\hvec$ be a parametric vector of $\Z[t]$ of degree $e$ which is asymptotically orthogonal to $\Bset'$ and $\hvec'$ be the result of applying Proposition \ref{prop:reduce_one_vector} to $\hvec$ and $\Bset$. If $\deg(\hvec') = \deg(\hvec)$, then $\hvec'$ is asymptotically orthogonal to $\Bset'$.  
\end{cor}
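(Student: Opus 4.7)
My plan is to unwind the construction of $\hvec'$ from the proof of Proposition~\ref{prop:reduce_one_vector} and check that the pilot vector $\widetilde{\hvec'}$ differs from $\widetilde{\hvec}$ only by an element of $\Span_\Q(\widetilde{\Bset})$. Once this is established, the result will follow immediately from the two asymptotic orthogonality hypotheses $\hvec \perp^{asym} \Bset'$ and $\Bset \perp^{asym} \Bset'$, since both $\widetilde{\hvec}$ and every vector in $\widetilde{\Bset}$ will then be orthogonal to each pilot vector of $\Bset'$.

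Concretely, I will first recall from the proof of Proposition~\ref{prop:reduce_one_vector} that $\hvec' = \hvec - \hvec_1$ where
$$\hvec_1 = \sum_{j=1}^{k} \nearest{\alpha_j}\,\fvec_j,$$
with $\fvec_j \in \Bset$ of degree $d$ and $\alpha_j \in \Q(t)$. By Lemma~\ref{lem:closestinteger}, each $\nearest{\alpha_j}$ is mild EQP, so by Definition~\ref{def:mildEQP} it has a single well-defined eventual leading coefficient $c_j \in \Q$ which equals the leading coefficient of $\alpha_j$ (with the convention $c_j = 0$ if $\deg(\alpha_j) < e-d$). Next, since $\deg(\hvec') = e$ by hypothesis and $\deg(\fvec_j) = d < e$, extracting the coefficient of $t^e$ from both sides of $\hvec = \hvec' + \hvec_1$ will yield
$$\widetilde{\hvec} = \widetilde{\hvec'} + \sum_{j=1}^{k} c_j\,\widetilde{\fvec_j},$$
establishing the desired containment $\widetilde{\hvec'} - \widetilde{\hvec} \in \Span_\Q(\widetilde{\Bset})$.

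Finally, to conclude I will compute the inner product with $\widetilde{\gvec}$ for an arbitrary $\gvec \in \Bset'$:
$$\widetilde{\hvec'} \cdot \widetilde{\gvec} \;=\; \widetilde{\hvec}\cdot\widetilde{\gvec} \;-\; \sum_{j=1}^{k} c_j\,\bigl(\widetilde{\fvec_j}\cdot\widetilde{\gvec}\bigr) \;=\; 0,$$
using $\hvec \perp^{asym} \Bset'$ for the first term and $\Bset \perp^{asym} \Bset'$ for each summand. Since this vanishes for every $\gvec \in \Bset'$, we obtain $\hvec' \perp^{asym} \Bset'$. The only delicate point is matching the leading coefficients of $\alpha_j$ and $\nearest{\alpha_j}$; I expect no real obstacle here, as this is precisely the content of the ``mild'' strengthening of EQP in Definition~\ref{def:mildEQP} together with Lemma~\ref{lem:closestinteger}.
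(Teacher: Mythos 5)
Your proposal is correct and follows essentially the same route as the paper's own proof: write $\hvec' = \hvec - \hvec_1$ with $\hvec_1 = \sum_j \nearest{\alpha_j}\fvec_j$, extract the degree-$e$ part (which by mildness is $\sum_j c_j \widetilde{\fvec_j}$ with $c_j$ the coefficient of $t^{e-d}$ in $\nearest{\alpha_j}$), and kill both terms of the resulting inner product using $\hvec \perp^{asym} \Bset'$ and $\Bset \perp^{asym} \Bset'$. Your uniform convention $c_j = 0$ when $\deg(\alpha_j) < e-d$ simply merges the paper's two cases $\deg(\hvec_1) < e$ and $\deg(\hvec_1) = e$ into one formula, which is fine.
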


\begin{proof}
  Recall that $\hvec' = \hvec - \hvec_1$ and $\hvec_1 = \sum_j \nearest{\alpha_j} \fvec_j$ where $\Bset = \{\fvec_1, \dots, \fvec_k\}$. We need to show that for each vector $\kvec_i \in \Bset'$, $\langle \widetilde{\hvec'}, \widetilde{\kvec_i} \rangle = 0$. If $\deg(\hvec_1) < e$, then $\widetilde{\hvec'} = \widetilde{\hvec}$ and we know this by hypothesis. Otherwise, $\widetilde{\hvec'} = \widetilde{\hvec} - \hvec_1^e$ and by the same hypothesis, it suffices to show that $\langle \hvec_1^e, \widetilde{\kvec_i} \rangle = 0$. But by the degree hypotheses, 
  \[ \displaystyle{\langle \hvec_1^e, \widetilde{\kvec_i} \rangle = \sum_j \nearest{\alpha_j}^{e-d} \langle \widetilde{\fvec_j}, \widetilde{\kvec_i} \rangle}, \]
  where $\alpha_j$ are as in the proof of Proposition \ref{prop:reduce_one_vector}. By the hypothesis that $\Bset, \Bset'$ are asymptotically orthogonal, each $ \langle \widetilde{\fvec_j}, \widetilde{\kvec_i} \rangle$ equals zero and we are done.  
\end{proof}

\subsection{A parametric LLL algorithm} \label{subsec:algorithm}
Now we describe our parametric version of the LLL reduction algorithm. The proof that this algorithm always terminates and gives an eventually LLL-reduced basis for a given parametric lattice will imply Theorem~\ref{thm:LLLreduced}.

The input to the parametric LLL reduction algorithm is a set $\mathcal{B} = \{\fvec_1, \ldots, \fvec_n\}$ of parametric vectors in $\Z[t]^m$ and a parameter $\delta$ (with $1/4 < \delta < 1$), and our final output will be an eventually LLL-reduced basis of the same parametric lattice (with factor $\delta$). Recall that we do \emph{not} assume the vectors in $\mathcal{B}$ to be linearly independent. Indeed, the algorithm below will weed out any extraneous $\fvec_i$ in Steps 1 and 2.

 Let $d$ be the maximum of the degrees of the $\fvec_i$.

\medskip

It will be useful in proving that the algorithm always terminates to define the \emph{degree sum} of a set $\{\fvec_1, \ldots, \fvec_n\}$ of nonzero parametric vectors as $$\textup{deg-sum}(\mathcal{B}) = \sum_i \deg( \fvec_i).$$

\underline{Step 1:} After applying Proposition~\ref{prop:Hermite} to $\mathcal{B}$, we may assume that:

\begin{enumerate}
\item If $\mathcal{B}_i \subseteq \mathcal{B}$ is the set of all vectors from $\mathcal{B}$ of degree $i$, then each set $\widetilde{\mathcal{B}_i}$ is linearly independent; and
\item $\mathcal{B}$ is ordered by degree: if $i \leq j$, then $\deg(\fvec_i) \leq \deg(\fvec_j)$.
\end{enumerate}

Note that if the input vectors $\{\fvec_1, \ldots, \fvec_n\}$ were linearly dependent over $\Z$, then some of the new vectors created in Step 1 will be $\textbf{0}$, that is, of degree $-\infty$. In this case, we immediately delete such vectors from our generating set in order to avoid any problem in defining its degree sum.

\medskip

\underline{Step 2:} We apply further reductions to the basis to conclude that we may assume, in addition to (1) and (2) above,

\begin{enumerate}
\setcounter{enumi}{2}
\item If $0 \leq d < e$, then $\mathcal{B}_d \perp^{asym} \mathcal{B}_e$.
\end{enumerate}

We will now show that condition (3) can be obtained by applying the reduction described in Proposition~\ref{prop:reduce_one_vector} repeatedly. Namely, suppose that $0 \leq k < d$ and the property (3) holds whenever $0 \leq i < j \leq k$, and say $\mathcal{B}_{k+1} = (\hvec_1, \ldots, \hvec_ \ell)$. For each $r$ staring from $0$ and working up to $k$, we apply Proposition~\ref{prop:reduce_one_vector} with $\mathcal{B} = \mathcal{B}_r$ on each vector $\hvec_s \in \mathcal{B}_{k+1}$, starting from $s=1$ and working up to $s=\ell$, and replace $\hvec_s$ by the new vector $\hvec'_s$ obtained from that Lemma. In case this new vector $\hvec'_s$ is $\textbf{0}$, we simply delete it from our generating set.

It may be that $\hvec'_s$ is EQP rather than polynomial, but if $\hvec'_s$ is eventually $M$-periodic we can branch on $t$ to assume that in fact $\hvec'_s \in \Z[t]^m$ (that is, perform a substitution of $Mt' + i$ for $t$ in all functions, for each $i \in \{0, \ldots, M-1\}$ in turn, and work with the new parameter variable $t'$). Note that after updating the value of $\hvec_s$ and branching if necessary, it may happen that either:

\medskip

 (A) $\deg(\hvec_s) \leq k)$, or
 
 \medskip
 
 (B) $\{\widetilde{\hvec_1}, \ldots, \widetilde{\hvec_s} \}$ becomes linearly dependent. 
 
 \medskip
 
 If either of these possibilities occurs, then we return to Step 1 above to resort all the vectors in $\mathcal{B}$ according to degree and ensure that pilot vectors within each degree are linearly independent.

To show that the process described in the previous paragraph does not give rise to an infinite loop (by returning to Step 1 infinitely many times), we must check that the situations described in (A) and (B) can only arise finitely many times. But each time (A) happens, $\textup{deg-sum}(\mathcal{B})$ must decrease; which can only happen a finite number of times; and likewise if (B) occurs, then the process of Hermite reduction as in Proposition~\ref{prop:Hermite} must also cause $\textup{deg-sum}(\mathcal{B})$ to go down (by condition (4) in Proposition~\ref{prop:Hermite}).

Finally, the result of applying the process above really does result in $\mathcal{B}_i \perp^{asym} \mathcal{B}_j$ for all $i < j$ by Corollary~\ref{cor:orthogonal_to_all}.

\medskip

Note as an immediate consequence that at this stage $\widetilde{\mathcal{B}}$ is linearly independent.

\medskip

\underline{Step 3:} Pick any $\delta'$ such that $\delta < \delta' < 1$, then apply Proposition \ref{prop:samedegree2} separately in each degree so that each $\Bset_d$ is an asymptotically LLL-reduced basis for the lattice it generates with factor $\delta'$.

We now have a parametric basis $\mathcal{B}$ satisfying the following:
\begin{enumerate}
\setcounter{enumi}{3}
\item The \textbf{eventual} Lov\'asz condition holds on the entire basis $\mathcal{B}$ with factor $\delta$ (by Lemma~\ref{lem:Lovaszbydegree});
\item Each subset $\mathcal{B}_d$ satisfies the \textbf{eventual} size-reduced condition (as a consequence of applying the procedure in Proposition \ref{prop:samedegree2});
\item The set $\widetilde{\mathcal{B}}$ of pilot vectors remain linearly independent (because we applied a unimodular matrix over $\Z$ to each set of vectors $\mathcal{B}_d$); and
\item For $d < e$, $\Bset_d \perp^{asym} \Bset_e$ still holds (by Lemma \ref{lem:orthogonal-span}, again noting that within each degree we applied a unimodular matrix over $\Z$, so that $\Span_\Z(\mathcal{B}_d)$ and $\Span_\Z(\mathcal{B}_e)$ are preserved). 
\end{enumerate}

\medskip

\underline{Step 4:} We are almost done: it only remains to obtain eventual size-reduction between vectors of different degrees without losing any of the conditions (4)-(7) above.


The idea is to reduce each vector by integer multiples of all of the vectors of strictly lower degree, just as in Algorithm \ref{alg:LLL}, but without swapping and without reducing by vectors of the same degree. 

More specifically, for each $d$, 
write $\Bset_d = (\fvec_{d,1},\ldots,\fvec_{d,m_d})$ ordered to be an eventually LLL-reduced basis. Then order all of the vectors as
\[\Bset = \left(\fvec_{(0,1)},\ldots,\fvec_{(0,m_0)},\ldots,\fvec_{(d_{\max},1)},\ldots,\fvec_{(d_{\max},m_{d_{\max}})}\right).\]
We will write $\prec$ for the lexicographic order on pairs of indices; that is, $(d,v) \prec (e,u)$ if $d < e$ or if $d = e$ and $v < u$. 

With this notation, we apply Algorithm \ref{alg:size-tweaking} below. We claim that this finishes the process of LLL reduction:

\begin{algorithm}
  \begin{algorithmic}[1] \caption{Final size reduction between vectors of different degrees} \label{alg:size-tweaking}
  \Statex \textbf{INPUT:} $\Bset = \{ \fvec_{(0,1)},\ldots,\fvec_{(0,m_0)},\ldots,\fvec_{(d_{\max},1)},\ldots,\fvec_{(d_{\max},m_{d_{\max}})}\} \subseteq \Z[t]^m$ 
  \Statex \textbf{OUTPUT:} An LLL-reduced basis $\{ \fvec_{(0,0)},\ldots,\fvec_{(0,m_0)},\ldots,\fvec_{(d_{\max},1)},\ldots,\fvec_{(d_{\max},m_{d_{\max}})}\} \subseteq \textup{EQP}^m$ 
  \State Compute the corresponding Gram-Schmidt vectors $\fvec_{d,u}$ for $0 \leq d \leq d_{\max}$ and $1 \leq u \leq m_d$, and the Gram-Schmidt coefficients $\rho_{(e,u),(d,v)}$ for $(d,v) \prec (e,u)$
  \For {$e = 1$ to $d_{\max}$ and $u=1$ to $m_e$}
   \For{$d = e-1$ down to $0$ and $v=m_d$ down to $1$}
  \State Let $q_{(d,v)} = \lfloor \rho_{(e,u),(d,v)} \rceil \in \textup{EQP}$ and set $\fvec_{(e,u)} = \fvec_{(e,u)} - q_{(d,v)}\fvec_{(d,v)}$
  \State Update the values $\rho_{(e,u),(c,w)}$ for $(c,w) \prec (e,u)$
  \State Branch mod M, where M is the eventual period of the EQP function $q_{(d,v)}$
    \EndFor
    \EndFor
    \end{algorithmic}
\end{algorithm}

\begin{prop}
Algorithm \ref{alg:size-tweaking} terminates and the output basis $\Bset$ satisfies the eventual size-reduced and eventual L\'ovasz conditions (with our chosen value of $\delta$).
\end{prop}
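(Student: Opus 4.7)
The plan is to verify three things about Algorithm \ref{alg:size-tweaking}: termination, the eventual size-reduced condition on the output, and the eventual Lov\'asz condition with factor $\delta$. Termination is straightforward, since both outer \textbf{for} loops range over finite index sets, and each iteration involves only a finite amount of computation: a rounding step that is mild EQP by Lemma \ref{lem:closestinteger}, followed by a branching step modulo the eventual period $M$ of that rounding. The branching produces finitely many sub-branches, each of which continues independently through the remaining iterations, so the algorithm halts on every branch.

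For eventual size-reducedness, I would fix the current index $(e,u)$ and note that the inner loop processes $(d,v)$ with $d < e$ in strictly decreasing lexicographic order. By Proposition \ref{prop:rho-bounds}(2), after the replacement $\fvec_{(e,u)} \leftarrow \fvec_{(e,u)} - q_{(d,v)} \fvec_{(d,v)}$ we have $|\rho'_{(e,u),(d,v)}(t)| \leq 1/2$ for all $t$, and by part (3) this bound is untouched by any later reduction of $\fvec_{(e,u)}$ by a basis vector of index strictly $\prec (d,v)$. For the same-degree pairs $(e,v)$ with $v < u$, size-reducedness was already established in Step 3, and it is preserved because $\fvec^*_{(e,v)}$ is orthogonal in $\Q(t)^m$ to every $\fvec_{(d,w)}$ with $(d,w) \prec (e,v)$ (since each such $\fvec_{(d,w)}$ lies in $\Span_{\Q(t)}\{\fvec^*_{(c',w')} : (c',w') \preceq (d,w)\}$), so subtracting integer multiples of those vectors from $\fvec_{(e,u)}$ does not change $\rho_{(e,u),(e,v)}$. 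Finally, by Proposition \ref{prop:rho-bounds}(4), later iterations that modify $\fvec_{(e',u')}$ for $(e,u) \prec (e',u')$ do not affect any $\rho_{(e,u),*}$, so the size-reducedness of $\fvec_{(e,u)}$ is preserved.

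For the eventual Lov\'asz condition, the Gram--Schmidt vectors $\fvec^*_{(e,u)}$ themselves are unchanged throughout Step 4, by Proposition \ref{prop:rho-bounds}(1); in particular, the ratios $\|\fvec^*_i\|^2/\|\fvec^*_{i-1}\|^2$ are preserved from the end of Step 3. When the predecessor of $\fvec_{(e,u)}$ in the lex order has strictly smaller degree, Lemma \ref{lem:lovasz} guarantees the eventual Lov\'asz condition with any factor. When it has the same degree, the coefficient $\rho_{(e,u),(e,u-1)}$ is unchanged by Step 4 (again by the orthogonality argument above), so the eventual Lov\'asz condition with factor $\delta$ inherited from Step 3 via Lemma \ref{lem:Lovaszbydegree} is preserved. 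The main subtle point in the argument is precisely this orthogonality claim: one must check that adjusting $\fvec_{(e,u)}$ by integer multiples of strictly lower-index basis vectors leaves every same-degree $\rho_{(e,u),(e,v)}$ invariant, which is exactly what lets Step 4 remove cross-degree size violations without disrupting the structure built up in Steps 1--3.
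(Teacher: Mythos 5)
Your proof is correct and follows essentially the same route as the paper: termination from the finiteness of the loops, size-reduction via Proposition \ref{prop:rho-bounds}(2)--(4), and the Lov\'asz condition via the invariance of the Gram--Schmidt vectors (Proposition \ref{prop:rho-bounds}(1)), Lemma \ref{lem:lovasz} across degrees, and preservation of $\rho_{(e,u),(e,u-1)}$ within a degree. The only place you are terser than the paper is the case where the \emph{earlier} same-degree vector $\fvec_{(e,u-1)}$ is itself reduced in its own pass, but this is already covered by your observations that $\fvec^*_{(e,u-1)}$ never changes and that modifying a basis vector other than $\fvec_{(e,u)}$ leaves $\rho_{(e,u),\,\cdot}$ untouched (Proposition \ref{prop:rho-bounds}(4)).
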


\begin{proof}
Termination of the algorithm is immediate because there are no potentially unbounded loops: although we may branch at each iteration, we immediately advance to the next iteration for every value of $i (mod M)$.

First we check that the output $\mathcal{B}$ satisfies the eventual Lov\'asz condition with factor $\delta$. Recall that in Step 3, we arranged for the eventual Lov\'asz condition to hold between any pair of adjacent vectors, and so we just need to check that the operations performed in Algorithm~\ref{alg:size-tweaking} preserve this condition. This condition automatically holds between vectors of different degrees by Lemma~\ref{lem:lovasz}, so we only need to check it for pairs of adjacent vectors of the same degree. By Proposition~\ref{prop:rho-bounds}, the Gram-Schmidt vectors $\fvec^*_{(d,u)}$ are unchanged by the reduction in line 4 of Algorithm~\ref{alg:size-tweaking}, so it is sufficient to check that the relevant $\rho$ value is also unchanged. On the one hand, suppose that we are considering $\rho_{(e,u), (e, u-1)}$ (for two vectors of degree $e$) and we apply line 4 to replace $\fvec_{(e, u)}$ by $\fvec_{(e, u)} - q_{(d,v)} \fvec_{(d,v)}$. Applying Proposition~\ref{prop:rho-bounds} (3) with $j = (d,v)$, $i = (e, u-1)$, and $k = (e, u)$, we conclude that $\rho_{k,i} = \rho_{(e,u), (e, u-1)}$ is unchanged, as desired. The only other case to consider is when we apply line 4 to replace $\fvec_{(e, u-1)}$ by $\fvec_{(e, u-1)} - q_{(d,v)} \fvec_{(d,v)}$, but in this case $$\rho_{(e,u), (e, u-1)} = \frac{\langle \fvec_{(e,u)}, \fvec^*_{(e,u-1)} \rangle}{\langle \fvec^*_{(e, u-1)}, \fvec^*_{(e,u-1)} \rangle}$$ is also unaffected since by Proposition~\ref{prop:rho-bounds} again the Gram-Schmidt vector $\fvec^*_{(e, u-1)}$ is not changed.

Finally, we verify that the output $\mathcal{B}$ satisfies the eventual size-reduced condition.

  First consider two pairs of indices $(e,u), (d,v)$ with $d < e$. At one point during Step 4 we will reduce $\fvec_{e,u}$ by $\fvec_{d,v}$ and obtain $|\rho_{(e,u),(d,v)}(t) | \leq 1/2$ for sufficiently large $t$ by Proposition \ref{prop:rho-bounds} (2).  Proposition \ref{prop:rho-bounds} (3) guarantees that $\rho_{(e,u),(d,v)}$ does not change again as we reduce $\fvec_{e,u}$ by other vectors earlier in the list, so we have achieved eventual size-reduction between vectors of different degrees.

  Now consider pairs of vectors of the same degree. For any degree $e$ and for $1 \leq v < u \leq m_e$ we already had $|\rho_{(e,u),(e,v)}|$ eventually less than or equal to 1/2 before starting Step 4. Since we only reduce $\fvec_{e,u}$ by vectors of degree less than $e$, which are earlier in the list than $\fvec_{e,v}$, $\rho_{(e,u),(e,v)}$ remains the same throughout Step 4 by Proposition \ref{prop:rho-bounds} (3). Thus eventual size-reduction continues to hold. 
  
 \end{proof}

\begin{proof}[Proof of Theorem \ref{thm:LLLreduced}:]
Apply Steps 1 through 4 above.
\end{proof}

\section{Shortest vector and closest vector problems}

\subsection{Shortest vector problem}

We now prove Theorem \ref{thm:shortest}. The only additional ingredient is a standard fact about comparison of polynomials and its extension to EQP functions, as follows. 

\begin{prop} \label{prop:polycomparison} 
Let $p_1, \dots, p_N \in \Z[t]$ be polynomials. There exists $i$ such that for all sufficiently large $t$, $p_i(t) \geq p_k(t)$ for every $k = 1,\dots,N$.  
\end{prop}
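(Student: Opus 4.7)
The plan is to reduce the comparison of $N$ polynomials to a sequence of pairwise comparisons, each of which is controlled by the leading term of a difference.

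First I would establish the basic pairwise fact: for any two polynomials $p, q \in \Z[t]$, either $p = q$ identically or the difference $p - q$ is a nonzero polynomial in $\Z[t]$. A nonzero polynomial has only finitely many real roots, so its sign is eventually constant and determined by the sign of its leading coefficient. Hence exactly one of the following holds: $p(t) = q(t)$ for all $t$, or $p(t) > q(t)$ for all sufficiently large $t$, or $p(t) < q(t)$ for all sufficiently large $t$. This lets me define a binary relation $p \succeq q$ meaning ``$p(t) \geq q(t)$ for all sufficiently large $t$,'' and the observation above shows that $\succeq$ is a total preorder on $\Z[t]$ (reflexivity and totality are immediate, and transitivity follows by intersecting the two eventual conditions).

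Given this preorder, the proposition follows by picking a maximal element of the finite set $\{p_1, \ldots, p_N\}$ under $\succeq$. Concretely, let $i$ be an index such that $p_i \succeq p_k$ for all $k$; such an $i$ exists because $\succeq$ is a total preorder on a finite set. For each $k \neq i$, choose $T_k \in \N$ such that $p_i(t) \geq p_k(t)$ whenever $t \geq T_k$, and set $T = \max_{k \neq i} T_k$. Then $p_i(t) \geq p_k(t)$ for every $k$ whenever $t \geq T$.

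There is no real obstacle here; the only thing to be a little careful about is checking transitivity of $\succeq$, but this is handled by the finite intersection of ``sufficiently large $t$'' thresholds. An alternative, essentially equivalent, presentation would be induction on $N$: the case $N=1$ is trivial, and for the inductive step one compares the maximum guaranteed by the inductive hypothesis with the remaining polynomial via the pairwise fact. Either way, the statement is a direct consequence of the ``eventual sign'' behavior of nonzero polynomials in $\Z[t]$.
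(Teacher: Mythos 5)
Your argument is correct: the eventual-sign behavior of a nonzero polynomial difference gives the pairwise comparison, and taking a maximal element of the finite set under the resulting total preorder (with a finite maximum of thresholds) yields the claim. The paper states this proposition without proof as a standard fact, and your proof is exactly the intended routine argument, so there is nothing substantive to compare.
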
 

\begin{cor} \label{cor:EQPcomparison}
Let $q_1, \dots, q_N$ be EQP functions. Then there exists a modulus $M$ and indices $i_1, \dots, i_M$ such that for all $j=1,\dots,M$ and all sufficiently large $t$ congruent to $j$ (mod $M$), $q_{i_j}(t) \geq q_k(t)$ for every $k=1,\dots,N$.  
\end{cor}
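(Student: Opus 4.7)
The plan is to reduce Corollary \ref{cor:EQPcomparison} to Proposition \ref{prop:polycomparison} by branching on a common residue. First, for each $k \in \{1, \ldots, N\}$, the function $q_k$ is EQP, so it has an eventual period $M_k$ and associated polynomials in $\Q[t]$ on each residue class modulo $M_k$. Let $M = \operatorname{lcm}(M_1, \ldots, M_N)$. Then for every $j \in \{0, \ldots, M-1\}$ and every $k \in \{1, \ldots, N\}$, there is a polynomial $p_{k,j}(t) \in \Q[t]$ such that $q_k(t) = p_{k,j}(t)$ for all sufficiently large $t$ with $t \equiv j \pmod M$.

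Next, I would fix a residue class $j$ and apply Proposition \ref{prop:polycomparison} to the finite list $p_{1,j}, \ldots, p_{N,j}$. Strictly speaking the proposition is stated for $p_k \in \Z[t]$, but clearing a common denominator (which is positive and does not affect comparisons for large $t$) reduces the rational case to the integer case. This yields an index $i_j \in \{1, \ldots, N\}$ such that $p_{i_j, j}(t) \geq p_{k, j}(t)$ for all $k$ and all sufficiently large $t$. Translating back, $q_{i_j}(t) \geq q_k(t)$ for all sufficiently large $t$ with $t \equiv j \pmod M$.

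Finally, since there are only finitely many residue classes $j \in \{0, \ldots, M-1\}$, each contributing a threshold beyond which the comparison holds, we may take the maximum of these thresholds and obtain the uniform conclusion stated in the corollary with modulus $M$ and indices $i_0, \ldots, i_{M-1}$. No step here looks like a real obstacle: the proof is essentially a bookkeeping reduction to the polynomial case via branching, which is precisely the strategy used throughout Section~3.
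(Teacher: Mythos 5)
Your proof is correct and follows essentially the same route as the paper: fix a common eventual period $M$, restrict to each residue class so that only polynomials are being compared, and invoke Proposition~\ref{prop:polycomparison}. The extra remark about clearing denominators to pass from $\Q[t]$ to $\Z[t]$ is harmless bookkeeping and does not change the argument.
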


\begin{proof}
Choose $T$ such that $q_1, \dots, q_N$ each agree with a quasi-polynomial for $t > T$ and let $M$ be a common period of the $N$ quasi-polynomials. For $t > T$ and congruent to a fixed $j$ (mod M), we thus compare only polynomials, so the result follows from Proposition \ref{prop:polycomparison}.
\end{proof}

\begin{proof}[Proof of Thoerem \ref{thm:shortest}]
Let $\Lambda_t = \Span_\Z \{\fvec_1(t), \dots, \fvec_n(t)\} \subseteq \Z[t]^m$. By Theorem \ref{thm:LLLreduced}, we may first obtain $\gvec_1(t),\ldots, \gvec_{n'}(t)$ with EQP coordinates, which eventually form an LLL-reduced basis of $\Lambda_t$, using the traditional $\delta=\frac{3}{4}$ for the Lov\'asz condition. For notational convenience we write $n$ instead of $n'$ in the arguments below.

To prove Theorem \ref{thm:shortest}, we want to describe a shortest nonzero vector in $\Lambda_t$, using EQP coordinates. For a given $t$, let $\uvec(t) \in\Lambda_t$ be \emph{any} shortest nonzero vector in $\Lambda_t$. By Fact~\ref{fact:coefbounds} above,
$\uvec(t) = \sum_i a_i\gvec_i(t)$, with $a_i\in\Z$ and $\abs{a_i}\le 3^n$. In other words, every shortest nonzero vector in $\Lambda_t$ is one of the $N := n^{2\cdot 3^n +1}$ vectors $\sum_i a_i\gvec_i(t)$ with $a_i\in\Z$ and $\abs{a_i}\le 3^n$. Note that this is a \text{fixed} number of vectors, independent of $t$. Since $\norm{\sum_i a_i\gvec_i(t)}^2$ are EQP functions, one of them will eventually be minimal within each residue class by Corollary \ref{cor:EQPcomparison}.

That is, as a function of $t$ the shortest vector is of the form $\uvec(t) = \sum_i a_i\gvec_i(t)$ where each $\gvec_i$ has EQP coordinates and the choice of $i$ is eventually periodic, so $\uvec(t)$ also has EQP coordinates.  
\end{proof}

\subsection{Closest vector problem}

To prove Theorem \ref{thm:closest}, we want to describe a vector in $\Lambda_t$ closest to $\xvec(t)\in \Q(t)^m$, using EQP coordinates. First, we show that we can reduce to the case where $\xvec(t)$ is in the subspace $\Span_\R \Lambda_t$.

\begin{lem}
\label{lem:orthog}
Let $\Lambda_t = \Span_\Z \{\fvec_1(t), \dots, \fvec_n(t)\} \subseteq \Z[t]^m$ and let $V_t=\Span_\R \Lambda_t$. Let $\xvec(t)\in \Q(t)^m$ and let $\yvec(t)$ be the orthogonal projection of $\xvec(t)$ onto $V_t$. Let $\uvec\in\Lambda_t$. Then
\begin{enumerate}
\item $\yvec$ is in $\Q(t)^m$ and
\item $\uvec$ is a closest vector (in $\Lambda_t$) to $\xvec(t)$ iff $\uvec$ is a closest vector to $\yvec(t)$.
\end{enumerate}

\begin{proof}
Let $A_t$ be the matrix whose columns are $\gvec_1(t),\ldots,\gvec_n(t)$. Then
\[\yvec(t) = \left(A_t^TA_t\right)^{-1}A_t^T\xvec(t).\]
Noting that the inverse of a matrix can be written using adjoints (and dividing by the determinant), this result is in $\Q(t)^m$, proving (1). Next, since $(\yvec(t)-\uvec)\in V_t$ and $(\xvec(t)-\yvec(t))$ are orthogonal, we have
\[\norm{\xvec(t)-\uvec}^2=\norm{\yvec(t)-\uvec}^2+\norm{\xvec(t)-\yvec(t)}^2,\]
and so $\uvec$  minimizes $\norm{\xvec(t)-\uvec}$ if and only if it minimizes $\norm{\yvec(t)-\uvec}$, proving (2).
\end{proof}
\end{lem}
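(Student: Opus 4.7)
The plan is to handle the two parts separately, each by a direct linear-algebraic calculation. For (1), I would write down the standard projection formula. Let $A_t$ denote the $m \times n$ matrix whose columns are $\fvec_1(t), \ldots, \fvec_n(t)$; these have entries in $\Z[t]$, and by the definition of a parametric lattice basis they are linearly independent as elements of $\Q(t)^m$. Hence the symmetric matrix $A_t^T A_t \in \Z[t]^{n \times n}$ is nonsingular over $\Q(t)$, and the orthogonal projection of $\xvec(t)$ onto $V_t = \Span_\R\{\fvec_i(t)\}$ is given by
\[\yvec(t) = A_t (A_t^T A_t)^{-1} A_t^T \xvec(t).\]
Using the adjugate formula $(A_t^T A_t)^{-1} = \frac{1}{\det(A_t^T A_t)} \operatorname{adj}(A_t^T A_t)$, every entry of the inverse is a quotient of polynomials in $t$, so $\yvec(t) \in \Q(t)^m$.

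For (2), I would apply the classical orthogonality argument. By construction, $\xvec(t) - \yvec(t) \in V_t^\perp$, and for any $\uvec \in \Lambda_t \subseteq V_t$ we have $\yvec(t) - \uvec \in V_t$. These two vectors are therefore orthogonal, and the Pythagorean theorem gives
\[\norm{\xvec(t) - \uvec}^2 \;=\; \norm{\xvec(t) - \yvec(t)}^2 + \norm{\yvec(t) - \uvec}^2.\]
The first term on the right-hand side is independent of $\uvec$, so minimizing $\norm{\xvec(t) - \uvec}$ over $\uvec \in \Lambda_t$ is equivalent to minimizing $\norm{\yvec(t) - \uvec}$ over the same set, which proves the equivalence in (2).

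I do not anticipate a serious obstacle here. Part (1) is really just standard linear algebra carried out over the fraction field $\Q(t)$ rather than over $\R$, and the only thing to double-check is that $\det(A_t^T A_t)$ is a nonzero element of $\Q(t)$ — which is immediate from the hypothesis that $\{\fvec_1(t), \ldots, \fvec_n(t)\}$ is a parametric lattice basis. Part (2) is the textbook decomposition argument and does not even use any of the parametric machinery developed earlier.
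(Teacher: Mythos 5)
Your proof is correct and follows essentially the same route as the paper: the adjugate formula applied to $A_t^T A_t$ over $\Q(t)$ for part (1), and the Pythagorean decomposition for part (2). In fact your displayed projection formula $\yvec(t) = A_t\left(A_t^T A_t\right)^{-1} A_t^T \xvec(t)$ is the fully correct one, whereas the paper's version omits the leading factor $A_t$ (giving the coordinate vector rather than the projection itself), so no changes are needed.
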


Now we proceed by induction on $n$, which is $\dim(\Lambda_t)$ (for sufficiently large $t$). When $n=0$, $\Lambda_t=\{0\}$ and so $\uvec(t)=0$ is the closest lattice vector to $\xvec(t)$. Assume the theorem is true for an $n-1$ dimensional lattice, and let $\gvec_1(t), \dots, \gvec_n(t)$ (with coordinates in EQP) be an eventually LLL-reduced basis (with  $\delta=\frac{3}{4}$) for our $n$-dimensional lattice, $\Lambda_t$. Using Lemma \ref{lem:orthog}, we may assume without loss of generality that $\xvec(t)\in V_t=\Span_\R \Lambda_t$.

Let $\gvec^*_1,\ldots, \gvec^*_n$ be the Gram-Schmidt vectors corresponding to $\gvec_1,\ldots, \gvec_n$. For a given $t$, let $\uvec\in\Lambda_t$ be a vector closest to $\xvec(t)$. Write
\[\uvec=\sum_{i=1}^na_i\gvec_i(t)=\sum_{i=1}^n\mu_i\gvec^*_i(t)\quad \text{and} \quad \xvec(t)=\sum_{i=1}^nc_i\gvec_i(t)=\sum_{i=1}^n\nu_i\gvec^*_i(t),\]
with $a_i\in\Z$ and $c_i,\mu_i,\nu_i\in\Q$ (for $1\le i\le n$), and note $a_n=\mu_n$ and $c_n=\nu_n$. We first seek a bound (independent of $t$) on $\abs{a_n-c_n}$, which will allow us to inductively search a fixed number of smaller dimensional lattice translates for the closest point.

To find this bound, we first use Babai's nearest plane method (see Fact~\ref{fact:babai} above), which gives a reasonably close lattice point, $\wvec\in\Lambda_t$, with the property that
\[\label{eqn:babai}\norm{\wvec-\xvec}\le 2^{n/2 - 1}\norm{\gvec^*_n}.\]

Now we have
\begin{align*}
\abs{\mu_n-\nu_n}\cdot\norm{\gvec^*_n}&\le \sqrt{\sum_{i=1}^n(\mu_i-\nu_i)^2\cdot\norm{\gvec^*_i}^2}\\
&= \Big|\Big|\sum_{i=1}^n(\mu_i-\nu_i)\gvec^*_i\Big|\Big|\\
&= \norm{\uvec-\xvec}\\
&\le \norm{\wvec-\xvec}\\
&\le 2^{n/2 - 1}\norm{\gvec^*_n}.
\end{align*}
Since $a_n=\mu_n$ and $c_n=\nu_n$, dividing both sides by $\norm{\gvec^*_n}$ yields our desired bounds:
\begin{equation}\label{eqn:cvp_bound}\abs{a_n-c_n}\le 2^{n/2 - 1}.\end{equation}

Let $\Lambda'_t=\Span_\Z \{\gvec_1(t), \dots, \gvec_{n-1}(t)\} \subseteq \Z[t]^m$, so that $\Lambda_t=\Lambda'_t+\Z\gvec_n$. Note that $c_n=c_n(t)$ is \emph{eventually quasi-rational} (meaning that there is some $N$ such that for large enough $t$, $c_n(t)$ can be expressed by one of $N$ different quotients of polynomials over $\Q$, depending on the congruence class of $t$ modulo $N$).  This implies that $\floor{c_n(t)}$ is EQP. Let $I=\left\{-2^{n/2 - 1},-2^{n/2 - 1}+1,\ldots,-1,0,1,\ldots,2^{n/2 - 1}+1\right\}$. The bounds (\ref{eqn:cvp_bound}) imply that, if $\uvec$ is a closest vector (in $\Lambda_t$) to $\xvec(t)$, then $\uvec\in \Lambda'_t+a_n\gvec_n$ for $a_n\in I+\floor{c_n(t)}$. This yields an inductive algorithm to find a closest vector with EQP coordinates: For $i
\in I$, let $\vvec^{(i)}(t)$ be a closest vector to $\xvec(t)-\left(i+\floor{c_n(t)}\right)\gvec_n$ in $\Lambda'$. By the inductive hypothesis, we can take $\vvec^{(i)}(t)$ with EQP coordinates. Then $\vvec^{(i)}(t)+\left(i+\floor{c_n(t)}\right)\gvec_n$ is a closest vector to $\xvec(t)$ in $\Lambda'_t+\left(i+\floor{c_n(t)}\right)\gvec_n$. The closest vector in $\Lambda$ must be one of these $2^{n/2}+2$ vectors $\vvec^{(i)}(t)+\left(i+\floor{c_n(t)}\right)\gvec_n$. We can compare these vectors with EQP coordinates, and one of them must eventually minimize $\norm{\vvec^{(i)}(t)+\left(i+\floor{c_n(t)}\right)\gvec_n - \xvec}$ and be a closest vector, for sufficiently large $t$.

\subsection{Acknowledgements} The first author was supported by an internal research grant (INV-2017-51-1453) from the Faculty of Sciences of the Universidad de los Andes during his work on this project. The first author would also like to thank the Oberlin College Mathematics Department for inviting him to be the 2018 Lenora Young Lecturer and facilitating his collaboration with the third author.  
\bibliography{LLL}

\providecommand{\bysame}{\leavevmode\hbox to3em{\hrulefill}\thinspace}
\providecommand{\MR}{\relax\ifhmode\unskip\space\fi MR }
\providecommand{\MRhref}[2]{%
  \href{http://www.ams.org/mathscinet-getitem?mr=#1}{#2}
}
\providecommand{\href}[2]{#2}
\begin{thebibliography}{10}

\bibitem{Ajtai}
Mikl\'os Ajtai, \emph{The shortest vector problem in ${L}_2$ is {NP}-hard for
  randomized reductions}, Proceedings of the thirtieth annual ACM symposium on
  theory of computing, ACM, 1998, pp.~10--19.

\bibitem{AjtaiDwork}
Mikl\'os Ajtai and Cynthia Dwork, \emph{A public-key cryptosystem with
  worst-case / average-case equivalence}, Proc. 29th ACM Symposium on Theory of
  Computating (1997), 284--293.

\bibitem{Babai86}
L{\'a}szl{\'o} Babai, \emph{On {L}ov{\'a}sz lattice reduction and the nearest
  lattice point problem}, Combinatorica \textbf{6} (1986), no.~1, 1--13.

\bibitem{Barv08}
Alexander Barvinok, \emph{Integer points in polyhedra}, vol. 452, European
  Mathematical Society, 2008.

\bibitem{BGW2017}
Tristram Bogart, John Goodrick, and Kevin Woods, \emph{Parametric {P}resburger
  arithmetic: logic, combinatorics, and quasi-polynomial behavior}, Discrete
  Analysis (2017), no.~4.

\bibitem{CW}
Danny Calegari and Alder Walker, \emph{Integer hulls of linear polyhedra and
  scl in families}, Transactions of the American Mathematical Society
  \textbf{365} (2013), 5085--5102.

\bibitem{CLS}
Sheng Chen, Nan Li, and Steven Sam, \emph{Generalized {E}hrhart polynomials},
  Transactions of the American Mathematical Society \textbf{364} (2012), no.~1,
  551--569.

\bibitem{Galbraith}
Steven~D. Galbraith, \emph{Mathematics of public key cryptography}, Cambridge
  University Press, 2012.

\bibitem{gauss}
Carl~Friedrich Gauss, \emph{Disquisitiones arithmeticae (1801)}, Yale
  University Press, 1966, English translation by A. A. Clarke.

\bibitem{Henk}
Martin Henk, \emph{Note on shortest and nearest lattice vectors}, Information
  Processing Letters \textbf{61} (1997), no.~4, 183--188.

\bibitem{LLL}
Arjen~K. Lenstra, Hendrik~W. Lenstra, and L{\'a}szl{\'o} Lov{\'a}sz,
  \emph{Factoring polynomials with rational coefficients}, Mathematische
  Annalen \textbf{261} (1982), no.~4, 515--534.

\bibitem{micciancio}
Daniele Micciancio, \emph{The shortest vector problem is {NP}-hard to
  approximate within some constant}, SIAM Journal of Computing \textbf{30}
  (2001), no.~6, 2008--2035.

\bibitem{Shen15b}
Bobby Shen, \emph{Parametrizing an integer linear program by an integer}, SIAM
  J. Discrete Math. \textbf{32} (2018), no.~1, 173--191.

\bibitem{vEB}
Peter van~{E}mde {B}oas, \emph{Another {NP}-complete problem and the complexity
  of computing short vectors in a lattice}, Technical Report 81-04,
  Mathematische Institut, University of Amsterdam, 1981.

\bibitem{Woods2014}
Kevin Woods, \emph{The unreasonable ubiquitousness of quasi-polynomials}, The
  Electronic Journal of Combinatorics \textbf{21} (2014), no.~1, 1--44.

\end{thebibliography}
\end{document}